\definecolor{bred}{rgb}{0.8,0,0}
\crefname{equation}{}{}
\newtheorem{lemma}{Lemma}[section]
\newtheorem{proposition}[lemma]{Proposition}
\newtheorem{theorem}[lemma]{Theorem}
\newtheorem{setting}[lemma]{Setting}
\crefname{subsection}{Subsection}{Subsections}
\crefname{enumi}{item}{items}
\newcommand{\1}{\ensuremath{\mathbbm{1}}}
\providecommand{\N}{{\ensuremath{\mathbbm{N}}}}
\providecommand{\Z}{{\ensuremath{\mathbbm{Z}}}}
\providecommand{\R}{{\ensuremath{\mathbbm{R}}}}
\renewcommand{\P}{{\ensuremath{\mathbbm{P}}}}
\providecommand{\E}{{\ensuremath{\mathbbm{E}}}}
\providecommand{\F}{{\ensuremath{\mathbbm{F}}}}
\providecommand{\bfD}{{\ensuremath{\mathbf{D}}}}
\providecommand{\calR}{{\ensuremath{\mathcal{R}}}}
\providecommand{\calD}{{\ensuremath{\mathcal{D}}}}
\providecommand{\bfN}{{\ensuremath{\mathbf{N}}}}
\newcommand{\rdown}[1]{\lfloor #1\rfloor}
\newcommand{\calX}{\mathcal{X}}
\newcommand{\calP}{\mathcal{P}}
\newcommand{\xeqref}[1]{}
\newcommand{\supnorm}[1]{{\left\vert\kern-0.25ex\left\vert\kern-0.25ex\left\vert #1 
    \right\vert\kern-0.25ex\right\vert\kern-0.25ex\right\vert}}
\title[DNNs overcome curse of dimensionality when approximating semilinear PIDEs]{Deep ReLU neural networks overcome  the curse of dimensionality\\ when approximating \\semilinear partial integro-differential equations}
\author[A. Neufeld]{Ariel Neufeld$^{1}$}
\address{$^1$  Division of Mathematical Sciences, School of Physical and Mathematical Sciences, Nanyang Technological University, Singapore}
\email{ariel.neufeld@ntu.edu.sg}
\author[T.A. Nguyen]{Tuan Anh Nguyen$^{2}$}
\address{$^2$  Faculty of Mathematics, Bielefeld University, Bielefeld, Germany}
\email{tnguyen@math.uni-bielefeld.de}
\author[S. Wu]{Sizhou Wu$^{3}$}
\address{$^3$  Shanghai University of Finance and Economics, School of Mathematics, Shanghai, China}
\email{wusizhou@sufe.edu.cn}
\subjclass{65C99, 65C05, 65C30, 68T07}
\keywords{Curse of dimensionality, high-dimensional PDEs, high-dimensional partial integro-differential equations,
deep neural networks, multilevel Picard approximations, stochastic fixed point equations, stochastic differential equations with jumps}
\thanks{
Financial support by the MOE AcRF Tier 2 Grant \emph{MOE-T2EP20222-0013} and the Nanyang Assistant Professorship Grant (NAP Grant) \emph{Machine Learning based Algorithms in
Finance and Insurance} is gratefully acknowledged.}
\begin{document}
	\vspace*{-1.7cm}
	\maketitle
\vspace*{-0.8cm}	
\begin{abstract}
In this paper we  
consider nonlinear partial integro-differential equations (PIDEs) with gradient-independent Lipschitz continuous nonlinearities 
and
prove that deep neural networks with ReLU activation function can approximate solutions of
such semilinear 
PIDEs without curse of dimensionality
in the sense that the required number of parameters in the deep neural networks increases at most polynomially in both the dimension $ d $ of the corresponding PIDE and the reciprocal of the prescribed accuracy~$ \epsilon $. 
\end{abstract}
\medskip


\section{Introduction}

Nonlinear partial integro-differential equations (PIDEs) 
have many important applications in finance,
physics, biology, economics, and engineering; we refer to
\cite{CT2004,CV2005,Del2013,S2005} and the references therein, as
well as \cite{BBJ+2022} for an excellent survey on applications of nonlocal 
partial differential equations (PDEs) in various fields.
The development of numerical methods and their complexity analysis 
for PIDEs are only at their infancy and are still emerging.
\cite{GS2021,GS2021a} proposed both machine learning based 
and Monte Carlo based methods for solving \textit{linear} PIDEs 
and showed that their algorithms do not suffer from the curse of dimensionality. 
In \cite{al2019applications} the deep Galerkin algorithm of 
\cite{SirignanoSpiliopoulos18} has been extented to PIDEs. 
In \cite{yuan2022pinn} physics informed neural networks for solving nonlinear 
PIDEs have been constructed.
Deep neural network (DNN) based algorithms for solving nonlinear PIDEs have been 
presented in \cite{al2019applications,Cas2021,FK2022a,FK2022,neufeld2024full}.
Recently, \cite{BBJ+2022} introduced for non-local nonlinear PDEs with Neumann 
boundary conditions both a deep splitting algorithm as well as 
a multilevel Picard (MLP) algorithm to numerically solve the non-local PDEs 
under consideration. Furthermore, \cite{GPP2022} introduced a DNN based 
algorithm for backward stochastic differential equations  with jumps.

The impressive computational performance of deep learning methods has raised questions concerning their
theoretical foundations.
However, compared to the field of applications where these methods are successfully applied extensively, there exists only one theoretical result proving that DNNs do overcome the curse of dimensionality when approximating \emph{linear} PIDEs, see \cite{GS2021a}.
The main contribution of our paper is a  proof  that DNNs with ReLU activation function do overcome the curse of dimensionality when approximating \emph{semilinear} PIDEs whose nonlinear part,  linear part,  jump term, and terminal condition are  Lipschitz continuous. Especially, \cref{k41} in \cref{main result} proves that the number of parameters in the approximating DNN increases at most polynomially in both the reciprocal of the described approximation accuracy $\epsilon\in (0,1)$ and the  dimension $d\in \N$ of the corresponding PIDE.
Before presenting our settings and main result, \cref{k41}, let us introduce some notations frequently used in this paper.
\subsection{Notations}\label{n01}
In this subsection we introduce some notations 
used throughout this paper. We denote by $\Z$ and $\N$ the set of all integers,
and the set of all positive integers, respectively. 
Let $\left \lVert\cdot \right\rVert, \supnorm{\cdot} \colon (\cup_{d\in \N} \R^d) \to [0,\infty)$, 
$\dim \colon (\cup_{d\in \N}\R^d) \to \N$
satisfy for all $d\in \N$, $x=(x_1,\ldots,x_d)\in \R^d$ that $\|x\|=\sqrt{\sum_{i=1}^d(x_i)^2}$, $\supnorm{x}=\max_{i\in [1,d]\cap \N}|x_i|$, and
$\dim (x)=d$ and
let 
$\lVert\cdot \rVert_\mathrm{F}\colon \cup_{d\in\N}\R^{d\times d}\to [0,\infty) $ satisfy for all
$ d\in\N $, $x=(x_{ij})_{i,j\in [1,d]\cap\Z}\in\R^{d\times d}$ that
$\lVert x\rVert^2_\mathrm{F}=\sum_{i,j=1}^{d}\lvert x_{ij}\rvert^2 $.
Moreover, for every Borel function $\mathfrak{f}:\R\to\cup_{d\in\N}\R^d$ and $t\in\R$, 
we use the notation 
$\mathfrak{f}(t-):= \lim_{s\to t,s<t}\mathfrak{f}(s)$ 
whenever the limit exists.

\subsection{Existence and uniqueness of solutions of PIDEs}
In this subsection we introduce a setting, \cref{setting PIDE} below, 
which ensures existence and uniqueness 
of solutions of nonlinear PIDEs (cf. \cref{p01b}) 
which we then aim to approximate 
by neural networks. 

\begin{setting}
\label{setting PIDE}
Let $T\in (0,\infty)$ and $c\in [2,\infty)$.
For every $d\in \N$ let 
$\beta^d\in C(\R^d,\R^d)  $,
$\sigma^d\in C(\R^d,\R^{d\times d})$, $\gamma^d\in C(\R^{2d},\R^d)$,
$g^d\in C(\R^d,\R)$, $f\in C(\R,\R)$ 
and let $\nu^d\colon \mathcal{B}(\R^d\setminus\{0\})\to [0,\infty)$ 
be a L\'evy measure. 
Assume that for all $d\in \N$ there exists $C_d\in (0,\infty)$ 
such that for all $x,y,z\in \R^d$ we have that
\begin{align}\label{pointwise gamma}
\left\lVert\gamma^d(x,z)\right\rVert^2
\leq C_d \left(1\wedge \lVert z\rVert^2\right),\quad
\left\lVert
\gamma^d(x,z)-\gamma^d(y,z)\right\rVert^2
\leq C_d \lVert x-y\rVert^2\left(1\wedge \lVert z\rVert^2\right).
\end{align}
Assume that for all
$d\in \N$ and $x,z\in\R^d$ the 
Jacobian matrix $(D_x\gamma^d)(x,z)$ exists. Assume that for all
$d\in \N$ there exists $\lambda_d\in (0,\infty)$ such that
for all $x,z\in\R^d$, $\delta\in [0,1]$
we have that
\begin{align}\label{cond Jacob}
\lambda_d\leq \left\lvert
\mathrm{det}(I_d +\delta (D_x\gamma^d)(x,z) )
\right\rvert,
\end{align}
where $I_d$ denotes the $d\times d$ identity matrix. 
Assume for all $d\in \N$, $x,y\in \R^d$, $w_1,w_2\in \R$ that
\begin{align} \begin{split} 
\left\lVert\beta^d(x)-
\beta^d(y)\right\rVert^2+
\left\lVert\sigma^d(x)-
\sigma^d(y)\right\rVert^2_{\mathrm{F}}
+
\int_{\R^d\setminus \{0\}}
\left\lVert
\gamma^d(x,z)-
\gamma^d(y,z))\right\rVert^2\nu^d(dz)
\leq c\lVert x-y\rVert^2,\end{split}
\label{Lip coe}
\end{align}
\begin{align}
\lvert
f(w_1)-f(w_2)\rvert^2\leq c\lvert w_1-w_2\rvert^2,\quad 
\left\lvert g^d(x)-g^d(y)\right\rvert^2\leq cd^cT^{-1}\lVert x-y \rVert^2,
\label{Lip f g}
\end{align} 
\begin{align}\label{growth 0}
\left
\lVert\beta^d(0)\right\rVert^2+
\left
\lVert\sigma^d(0)\right\rVert^2_{\mathrm{F}}+
\int_{\R^d\setminus \{0\}}
\left\lVert
\gamma^d(0,z)\right\rVert^2\nu^d(dz)
+T^3(\lvert f(0)\rvert+1)^2
+T\lvert g^d(0)\rvert^2
\leq cd^c.
\end{align}
\end{setting}

Under \cref{setting PIDE} 
we present the following Proposition, 
proved in \cite[Propositions 5.4 and 5.16]{NW2022}, 
for the existence and uniqueness
of viscosity solutions of the nonlinear PIDE
which we aim to approximate by neural networks. 
\begin{proposition}
\label{p01b}
 Assume \cref{setting PIDE}. Then for every $d\in \N$
there exists a unique viscosity solution\footnote{For the definition of a viscosity solution see, e.g., \cite[Definition 2.7]{NW2022}.} $u^d\colon [0,T]\times \R^d\to\R$ to the PIDE
\begin{align} \begin{cases} 
&(\tfrac{\partial }{\partial t}u^d)(t,x)
+\left\langle \beta^d(x),(\nabla_x u^d)(t,x) \right\rangle\\&+\frac{1}{2}\mathrm{trace}\!\left(
\sigma^d(t,x)(\sigma^d(t,x))^\top\mathrm{Hess}_xu^d(t,x)\right)
+f(u^d(t,x))\\
&+\displaystyle\int_{\R^d}
\left(
u^d(x+\gamma^d(x,z))-u^d(t,x)
-\left\langle
(\nabla_x u^d)(t,x), \gamma^d(x,z)
\right\rangle\right) \nu^d(dz)=0
\quad \forall\,t\in [0,T),x\in\R^d,\\
& u^d(T,x)=g^d(x)\quad \forall\,x\in\R^d
\end{cases}\label{PIDE}
\end{align} 
satisfying that $\sup_{s\in [0,T],y\in \R^d}\frac{\lvert u^d(s,y)\rvert}
{(d^{c}+\lVert y\rVert^2)^{1/2}}<\infty$.
\end{proposition}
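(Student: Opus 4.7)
The plan is to follow the standard probabilistic (Feynman--Kac) route adapted to the jump setting. For each $d \in \N$ and each $(t,x) \in [0,T] \times \R^d$, I would first introduce the associated L\'evy-type jump-diffusion SDE
\begin{equation*}
X^{d,t,x}_s = x + \int_t^s \beta^d(X^{d,t,x}_{r-})\,dr + \int_t^s \sigma^d(X^{d,t,x}_{r-})\,dW_r + \int_t^s \int_{\R^d \setminus \{0\}} \gamma^d(X^{d,t,x}_{r-},z)\,\tilde{N}(dr,dz),
\end{equation*}
where $W$ is a $d$-dimensional Brownian motion and $\tilde{N}$ is an independent compensated Poisson random measure on $[0,T]\times(\R^d\setminus\{0\})$ with intensity $ds\otimes \nu^d(dz)$. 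Using the Lipschitz bound \eqref{Lip coe} together with the boundedness of $\gamma^d$ by $1\wedge\|z\|^2$ from \eqref{pointwise gamma} and the growth bound \eqref{growth 0}, standard Picard iteration on a Banach space of $L^2$-adapted c\`adl\`ag processes yields a unique strong solution with $\E\|X^{d,t,x}_s\|^2 \leq K_d(1+\|x\|^2)$.

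Next I would formulate the stochastic fixed-point equation
\begin{equation*}
v(t,x) = \E\!\left[g^d(X^{d,t,x}_T)\right] + \int_t^T \E\!\left[f(v(s, X^{d,t,x}_s))\right] ds,\qquad (t,x)\in[0,T]\times\R^d,
\end{equation*}
and prove existence and uniqueness of a solution $v=u^d$ within the Banach space of continuous functions $w$ with finite weighted norm $\sup_{(s,y)} |w(s,y)|(d^c + \|y\|^2)^{-1/2}$. The Lipschitz property of $f$ combined with the $L^2$-moment bounds for $X^{d,t,x}$ gives a contraction on a short interval $[T-\delta,T]$ with $\delta$ independent of the starting point; iterating backwards in time produces the unique fixed point on $[0,T]$ in the prescribed growth class. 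The growth bound $\sup|u^d(s,y)|/(d^c+\|y\|^2)^{1/2}<\infty$ then follows from the bounds on $g^d(0)$, $f(0)$ in \eqref{growth 0} and the Lipschitz conditions \eqref{Lip f g}.

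To identify $u^d$ with a viscosity solution of \eqref{PIDE}, I would combine the Markov property of $X^{d,t,x}$ with the dynamic programming identity
\begin{equation*}
u^d(t,x) = \E\!\left[u^d(t+h, X^{d,t,x}_{t+h})\right] + \int_t^{t+h} \E\!\left[f(u^d(s,X^{d,t,x}_s))\right] ds
\end{equation*}
inherited from the fixed-point equation. Applying It\^o's formula for jump diffusions to a smooth test function $\varphi$ touching $u^d$ from above (resp.~below) at $(t_0,x_0)$, dividing by $h$, and sending $h\downarrow 0$ produces the viscosity subsolution (resp.~supersolution) inequality, with the non-local term in the generator of $X^{d,t,x}$ matching the integral operator in \eqref{PIDE} exactly. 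For uniqueness, I would invoke a comparison principle for sub- and supersolutions of \eqref{PIDE} within the class of functions of at most linear growth in $(d^c+\|y\|^2)^{1/2}$, proved via the Crandall--Ishii doubling-of-variables technique extended to non-local operators (in the spirit of Barles--Imbert or Alvarez--Tourin).

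The main obstacle is the comparison argument: when doubling variables one has to control the difference of the non-local terms evaluated at the two doubled points, and this requires the map $z\mapsto x+\gamma^d(x,z)$ to be a well-behaved change of variables. Here assumption \eqref{cond Jacob} on the Jacobian determinant is exactly what is needed: it guarantees that the pushforward of $\nu^d$ under $x\mapsto x+\gamma^d(x,z)$ admits a bounded density relative to $\nu^d$, which is precisely the ingredient that allows the non-local part to be absorbed in the limit $\varepsilon\downarrow 0$ of the doubling procedure. Once comparison is established in the stated growth class, uniqueness follows and, together with the Feynman--Kac existence, this yields the proposition.
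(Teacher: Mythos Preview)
Your outline is essentially the standard Feynman--Kac route and coincides with what the paper does: the paper does not supply its own argument but simply invokes \cite[Propositions~5.4 and~5.16]{NW2022}, and those results are proved exactly along the lines you sketch (SDE wellposedness, fixed-point equation in a weighted sup-norm, dynamic programming plus It\^o for the viscosity property, and a non-local comparison principle for uniqueness). One small correction: the role of assumption \eqref{cond Jacob} is not that it gives a bounded density of a pushforward of $\nu^d$, but rather that the lower bound on $\det(I_d+\delta D_x\gamma^d)$ forces the map $x\mapsto x+\gamma^d(x,z)$ to be a diffeomorphism for each $z$, which is what one actually uses when estimating the difference of the two non-local terms at the doubled points in the Crandall--Ishii argument.
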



\subsection{Setting for approximating functions}
In this section, we present our setting for functions which will approximate
the linear part, the terminal condition, and the non-local part of the
PIDE \eqref{PIDE}. 
Later, we will assume that these functions are neural networks in order to
prove our main result presented in \cref{k41}.

\begin{setting}
\label{setting approx NN}
Let $T\in (0,\infty)$ and $c\in [2,\infty)$ 
be the constants introduced in \cref{setting PIDE}.
For every $d\in \N$, $\varepsilon\in (0,1)$
let
$\beta_\varepsilon^d\in C(\R^d,\R^d)$,
$\sigma_\varepsilon^d\in C(\R^d,\R^{d\times d})$,
$\gamma_{\varepsilon}^d\in C(\R^{2d},\R^{d\times d})$,
$g^d_\varepsilon\in C(\R^d,\R)$.
Assume that for all $d\in \N$, $\varepsilon\in(0,1)$ there exists 
$C_{d,\varepsilon}\in (0,\infty)$ 
such that for all $x,y,z\in \R^d$ we have that
\begin{align}\label{pointwise gamma epsilon}
\left\lVert\gamma^d_\varepsilon(x,z)\right\rVert^2
\leq C_{d,\varepsilon} \left(1\wedge \lVert z\rVert^2\right)
,\quad
\left\lVert
\gamma^d_\varepsilon(x,z)-\gamma^d_\varepsilon(y,z)\right\rVert^2
\leq C_{d,\varepsilon} \lVert x-y\rVert^2\left(1\wedge \lVert z\rVert^2\right).
\end{align}
Moreover, assume 
 for all $d\in \N$, $x,y\in \R^d$, 
$w_1,w_2\in \R$,
$\varepsilon\in (0,1)$ that
\begin{align} \begin{split} 
\left\lVert\beta^d_\varepsilon(x)-
\beta^d_\varepsilon(y)\right\rVert^2+
\left\lVert\sigma^d_\varepsilon(x)-
\sigma^d_\varepsilon(y)\right\rVert^2_{\mathrm{F}}
+
\int_{\R^d\setminus \{0\}}
\left\lVert
\gamma^d_\varepsilon(x,z)-
\gamma^d_\varepsilon(y,z)\right\rVert^2\nu^d(dz)
\leq c\lVert x-y\rVert^2,\end{split}
\label{Lip NN coe}
\end{align}
\begin{align} 
\left\lvert g^d_\varepsilon(x)-g^d_\varepsilon(y)\right\rvert^2\leq cd^cT^{-1}\lVert x-y \rVert^2,
\label{Lip g epsilon}
\end{align} 
\begin{align}
\left
\lVert\beta^d_\varepsilon(0)\right\rVert^2+
\left
\lVert\sigma^d_\varepsilon(0)\right\rVert^2_{\mathrm{F}}+
\int_{\R^d\setminus \{0\}}
\left\lVert
\gamma^d_\varepsilon(0,z)\right\rVert^2\nu^d(dz)
+T\lvert g^d_\varepsilon(0)\rvert^2
\leq cd^c,
\label{growth NN coe}
\end{align}
\begin{align} 
\label{approx NN coe}
\begin{split} 
&
\left\lVert
\beta^d_\varepsilon(x)-
\beta^d(x)\right\rVert^2
+
\left\lVert
\sigma^d_\varepsilon(x)-
\sigma^d(x)\right\rVert^2_{\mathrm{F}}
+\int_{\R^d\setminus \{0\}}
\left\lVert
\gamma^d_\varepsilon(x,z)-
\gamma^d(x,z)\right\rVert^2\,\nu^d(dz)
 + \left\lvert g^d_\varepsilon(x)-g^d(x)\right\rvert^2\\&\leq \varepsilon c d^c (d^c+\lVert x\rVert^2).\end{split}
\end{align}
\end{setting}

Note that condition \eqref{approx NN coe} ensures that the input functions
$\beta^d,\sigma^d,\gamma^d,g^d $ can be approximated by the functions
$\beta^d_\varepsilon,\sigma^d_\varepsilon,\gamma^d_\varepsilon,g^d_\varepsilon $. 
Later in the main theorem, \cref{k41} below, 
we will assume that 
$\beta^d_\varepsilon,\sigma^d_\varepsilon,\gamma^d_\varepsilon,g^d_\varepsilon $ 
are DNN functions.

\subsection{A mathematical framework for DNNs}

In \cref{m07} below we introduce a mathematical framework for DNNs with ReLU activation function.

\begin{setting}[A mathematical framework for DNNs]
\label{m07}
For every $d\in \N$ 
let
 $\mathbf{A}_{d}\colon \R^d\to\R^d $ satisfy for all  $x=(x_1,\ldots,x_d)\in \R^d$ that 
\begin{align}
\mathbf{A}_{d}(x)= \left(\max\{x_1,0\},\max\{x_2,0\},\ldots,\max\{x_d,0\}\right).
\end{align}
Let $\mathbf{D}=\cup_{H\in \N} \N^{H+2}$.
Let
\begin{align}
\mathbf{N}= \bigcup_{H\in  \N}\bigcup_{(k_0,k_1,\ldots,k_{H+1})\in \N^{H+2}}
\left[ \prod_{n=1}^{H+1} \left(\R^{k_{n}\times k_{n-1}} \times\R^{k_{n}}\right)\right].
\end{align} 
Let $\mathcal{D}\colon \mathbf{N}\to\mathbf{D}$, 
$\mathcal{P}\colon \mathbf{N}\to \N$,
$
\mathcal{R}\colon \mathbf{N}\to (\cup_{k,l\in \N} C(\R^k,\R^l))$
satisfy that
for all $H\in \N$, $k_0,k_1,\ldots,k_H,k_{H+1}\in \N$,
$
\Phi = ((W_1,B_1),\ldots,(W_{H+1},B_{H+1}))\in \prod_{n=1}^{H+1} \left(\R^{k_n\times k_{n-1}} \times\R^{k_n}\right), 
$
$x_0 \in \R^{k_0},\ldots,x_{H}\in \R^{k_{H}}$ with the property that
$\forall\, n\in \N\cap [1,H]\colon x_n = \mathbf{A}_{k_n}(W_n x_{n-1}+B_n )
$ we have that
\begin{align}
\mathcal{P}(\Phi)=\sum_{n=1}^{H+1}k_n(k_{n-1}+1),
\quad 
\mathcal{D}(\Phi)= (k_0,k_1,\ldots,k_{H}, k_{H+1}),
\end{align}
$
\mathcal{R}(\Phi )\in C(\R^{k_0},\R ^ {k_{H+1}}),
$
and
\begin{align}
 (\mathcal{R}(\Phi)) (x_0) = W_{H+1}x_{H}+B_{H+1}.
\end{align}
\end{setting}
Let us comment on the mathematical objects  in  \cref{m07}. For all $ d\in \N $, $ \mathbf{A}_d\colon\R^d\to\R^d$  refers to the componentwise  rectified linear unit (ReLU) activation function. 
By $ \mathbf{N} $ we denote the set of all
parameters characterizing artificial feed-forward DNNs.
For every $H\in \N$, $k_0,k_1,\ldots,k_H,k_{H+1}\in \N$,
$
\Phi = ((W_1,B_1),\ldots,(W_{H+1},B_{H+1}))\in \prod_{n=1}^{H+1} \left(\R^{k_n\times k_{n-1}} \times\R^{k_n}\right)\subseteq \bfN
$ the natural number $H$ can be interpreted as the depth of the parameters characterizing artificial feed-forward DNN $\Phi$ and
$
(W_1,B_1),\ldots,(W_{H+1},B_{H+1})
$
can be interpreted as the parameters of $\Phi$.
By $ \calR $   we denote the operator that maps each parameters characterizing a DNN to its corresponding function. By $ \calP $ we denote the function that counts the number of parameters of the corresponding DNN. By $ \calD $ we denote the function that maps the parameters characterizing a DNN to the vector of its layer dimensions.

\subsection{Main result}
\label{main result}

\begin{theorem}
\label{k41}
Assume Settings \ref{setting PIDE}, \ref{setting approx NN}, and \ref{m07}.
For every $d\in \N$, $\varepsilon\in (0,1)$
let $F_{\varepsilon}^d\in C(\R,\R^{d\times d})$ and 
$G^d\in C(\R^d,\R^d)$ satisfy for all 
$y,z\in \R^d$ that
$\gamma_{\varepsilon}^d(y,z)=F_{\varepsilon}^d(y)G^d(z) $.
For every $d\in \N$, $\varepsilon\in (0,1)$, $v\in\R^d$
let
$\Phi_{\beta_\varepsilon^d},\Phi_{\sigma_\varepsilon^d,v}, 
\Phi_{F^d_\varepsilon,v}, \Phi_{g^d_\varepsilon}
\in \bfN
$
satisfy that
$\beta_\varepsilon^d=\calR(\Phi_{\beta_\varepsilon^d}) $, 
$\sigma_\varepsilon^d (\cdot)v=\calR(\Phi_{\sigma_\varepsilon^d,v})$,
$F^d_\varepsilon(\cdot)v=\calR(\Phi_{F^d_\varepsilon,v} )$,
$g^d_\varepsilon=\calR(\Phi_{g^d_\varepsilon})$.
Assume for all $d\in \N$, $\varepsilon\in (0,1)$, $v\in\R^d$ that
$\calD(\Phi_{\sigma_\varepsilon^d,v})=\calD(\Phi_{\sigma_\varepsilon^d,0})$,
$\calD(\Phi_{F_\varepsilon^d,v})=\calD(\Phi_{F_\varepsilon^d,0})$, 
\begin{align}
\supnorm{\calD(\Phi_{\beta^d_\varepsilon})}
+
\supnorm{\calD(\Phi_{\sigma^d_\varepsilon,0})}
+\supnorm{\calD(\Phi_{F^d_\varepsilon,0})}
+\supnorm{\calD(\Phi_{g^d_\varepsilon})}\leq \frac{bd^c\varepsilon^{-c}}{4},\label{k59}
\end{align}
and
\begin{align}
\dim(\calD(\Phi_{\beta^d_\varepsilon}))+ \dim(\calD(\Phi_{\sigma^d_\varepsilon,0}))+
\dim(\calD(\Phi_{F^d_\varepsilon,0}))+
\dim(\calD(\Phi_{g^d_\varepsilon}))\leq \frac{bd^c\varepsilon^{-c}}{4}
\label{k60}
\end{align}
with 
$
b = 64\Bigl(1+ \left\lvert c^\frac{1}{2}(4 c^\frac{1}{2} 
+2 c^\frac{1}{2} T^{-\frac{3}{2}} )\right\rvert^\frac{1}{2}\Bigr),
$
where $c\in[2,\infty)$ is the constant introduced in \cref{setting PIDE}.

Moreover, for each $d\in\N$ let $u^d:[0,T]\times\R^d\to\R$ 
be the unique viscosity solution to PIDE \eqref{PIDE} (c.f. \cref{p01b}),
and let $\Gamma^d\colon \mathcal{B}(\R^d)\to [0,1]$ 
be a probability measure satisfying
\begin{align}
\left(\int_{\R^d}\lVert x\rVert^4\,\Gamma^d(dx)\right)^\frac{1}{2}\leq cd^c.
\label{x61}
\end{align}

Then there exists $(C_{\delta})_{\delta\in (0,1)}\subseteq(0,\infty)$, $\eta\in (0,\infty)$, $(\Psi_{d,\epsilon})_{d\in \N, \epsilon\in(0,1)}$ such that for all 
$d\in \N$, $\epsilon\in(0,1)$ we have that 
$\calR(\Psi_{d,\epsilon})\in C(\R^d,\R)$,  
\begin{align}
	\calP(\Psi_{d,\epsilon})
	\leq C_\delta\eta d^{3c+12c^2+2c(6+\delta)}\epsilon^{-6c-6-\delta},
	\qquad 
	\mbox{ and } 
	\qquad \left(
\int_{\R^d}
\left\lvert(\calR(\Psi_{d,\epsilon}))(x)
-u^d(t,x)\right\rvert^2\Gamma^d(dx)\right)^\frac{1}{2}\leq \epsilon.
\label{k92}
\end{align}

\end{theorem}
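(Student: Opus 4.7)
The plan is to prove \cref{k41} via the multilevel Picard (MLP) together with DNN representation paradigm. First I would recall the Feynman--Kac type stochastic fixed point representation associated with the PIDE \eqref{PIDE}: viewing $u^d(t,x)$ as the value associated with a BSDE with jumps driven by the jump diffusion $X^{d,t,x}$ with coefficients $(\beta^d,\sigma^d,\gamma^d,\nu^d)$ and running nonlinearity $f$, one obtains $u^d(t,x)=\E[g^d(X^{d,t,x}_T)]+\int_t^T \E[f(u^d(s,X^{d,t,x}_s))]\,ds$, which is the stochastic fixed point that the MLP iterates $U^{d,\theta}_{n,M}$ are designed to solve. The MLP error bound for semilinear nonlocal equations with Lipschitz coefficients (of the type established in \cite{NW2022}) yields an $L^2$ error of order $M^{-n/2}$ that, with $n$ of order $\log(1/\epsilon)$ and $M$ of order $\epsilon^{-2}$, reaches the target accuracy at cost essentially $M^n$ scalar evaluations per initial condition, with no curse of dimensionality.

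Second, I would represent each MLP iterate as a ReLU DNN by induction on the MLP depth. At depth $0$, the value is $g^d_\varepsilon$ evaluated at an Euler--Maruyama approximation of $X^{d,t,x}_T$; by the assumed factorization $\gamma^d_\varepsilon(y,z)=F^d_\varepsilon(y)G^d(z)$, each Euler step of the form $X_{k+1}=X_k+\beta^d_\varepsilon(X_k)h+\sigma^d_\varepsilon(X_k)\xi_k+F^d_\varepsilon(X_k)G^d(Z_k)$ is realized by \emph{compositions} and \emph{parallelizations} of the DNNs $\Phi_{\beta^d_\varepsilon}$, $\Phi_{\sigma^d_\varepsilon,\xi_k}$, $\Phi_{F^d_\varepsilon,G^d(Z_k)}$, whose depth vectors are bounded via \eqref{k59}--\eqref{k60}. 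At higher MLP levels, the iterate is a finite linear combination of such terms composed with a ReLU DNN exact (or near-exact) representation of the scalar Lipschitz function $f$; the standard ReLU DNN calculus (composition, parallelization, scalar multiplication, affine postcomposition) then assembles a single network $\Psi_{d,\epsilon}$ realizing a sample of $U^{d,\theta}_{n,M}(0,\cdot)$, with depth polynomial in $nTh^{-1}$ and width polynomial in $M^n$ and $bd^c\varepsilon^{-c}$. The factorization hypothesis on $\gamma^d_\varepsilon$ is essential here, since otherwise the $z$-dependence could not be handled by a single DNN evaluation.

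Third, the total $L^2$ error under $\Gamma^d$ should be split into (i) MLP iteration error at the continuous-time equation, (ii) Euler--Maruyama discretization error for the jump diffusion $X^{d,t,x}$ with jump coefficient $\gamma^d$, and (iii) coefficient-replacement error controlled via \eqref{approx NN coe} combined with the moment bound \eqref{x61}; each piece is absorbed by choosing $n$, $M$, the time step $h$, and the coefficient tolerance $\varepsilon$ as explicit powers of $\epsilon$ and $d$. The hardest step will be the careful bookkeeping that converts these error estimates into the precise exponents appearing in \eqref{k92}: one must (a) establish a Lipschitz-preservation/Gr\"onwall lemma for the composed Euler--MLP scheme so that the coefficient error \eqref{approx NN coe} propagates only linearly through the $n=O(\log \epsilon^{-1})$ MLP levels despite the $M^n$ combinatorial branching, (b) apply the DNN composition/parallelization calculus so that the final parameter count satisfies $\calP(\Psi_{d,\epsilon})=\mathrm{poly}(d,\epsilon^{-1})$ with the specific exponents $3c+12c^2+2c(6+\delta)$ in $d$ and $6c+6+\delta$ in $\epsilon^{-1}$, and (c) absorb the $M^n$ blow-up into the $\epsilon^{-\delta}$ slack, which is precisely why the free parameter $\delta>0$ appears in the statement.
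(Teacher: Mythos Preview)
Your overall architecture matches the paper's proof: the stochastic fixed-point representation of $u^d$, the three-term error split (MLP iteration error against the discretized fixed point $u^{d,K,\varepsilon}$, Euler error $u^{d,K,\varepsilon}-u^{d,\varepsilon}$, and coefficient-replacement error $u^{d,\varepsilon}-u^d$), the DNN realization of each Euler step via the factorization $\gamma^d_\varepsilon(y,z)=F^d_\varepsilon(y)G^d(z)$, the ReLU approximation of $f$, and the final good-$\omega$ selection. So the strategy is right.

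The concrete gap is your MLP parameter choice. The MLP error bound actually used (via \cite{HJKN2020}) has the form $e^{m/2}m^{-n/2}e^{12cTn}$ times a polynomial in $d$; taking $M\sim\epsilon^{-2}$ makes the prefactor $e^{M/2}$ blow up, so the error does not even converge. Independently of this, the DNN width produced by the MLP-to-DNN construction scales like $(3m)^n$, so $M\sim\epsilon^{-2}$ and $n\sim\log(1/\epsilon)$ give a parameter count of order $\epsilon^{-c\log(1/\epsilon)}$, which is super-polynomial and would not yield \eqref{k92}. The paper instead runs the MLP on the diagonal $m=n=N_{d,\epsilon}$, with $N_{d,\epsilon}$ the least $n\geq 2$ for which $c_2 d^{2c}\bigl(e^{(12cT+1/2)n}n^{-n/2}+n^{-n/2}\bigr)\leq \epsilon/2$, sets $K=n^n$ so that the Euler contribution $K^{-1/2}=n^{-n/2}$ matches, and chooses $\varepsilon_{d,\epsilon}$ via $c_2 d^{2c}\varepsilon_{d,\epsilon}^{1/2}=\epsilon/2$. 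The parameter bound then reduces to controlling $(3N_{d,\epsilon})^{3N_{d,\epsilon}+1}$; by minimality of $N_{d,\epsilon}$ one has $e^{(12cT+0.5)(N_{d,\epsilon}-1)}/(N_{d,\epsilon}-1)^{(N_{d,\epsilon}-1)/2}\gtrsim \epsilon d^{-2c}$, and the finiteness of $C_\delta=\sup_{n\geq 2}(3n)^{3n+1}\bigl(e^{(12cT+0.5)(n-1)}/(n-1)^{(n-1)/2}\bigr)^{6+\delta}$ is precisely what the slack $\delta>0$ buys. Replace your ``$n\sim\log(1/\epsilon)$, $M\sim\epsilon^{-2}$'' by this diagonal scaling and the rest of your outline goes through.
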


Let us make some comments on the mathematical objects  in  \cref{k41}.

The assumptions above \eqref{k59} ensure that the functions
$g^d_\varepsilon,\beta^d_\varepsilon,\sigma^d_\varepsilon,
f_\varepsilon, F^d_\varepsilon, \gamma^d_\varepsilon$
which approximate the terminal condition, the linear part, 
the nonlinear part (see \cref{lemma f epsilon}), and the non-local part
of the PIDE \eqref{PIDE} are neural networks.
The bound $bd^c\varepsilon^{-c}/4$ in \eqref{k59} and \eqref{k60}, 
which is a polynomial of $d$ and $\varepsilon^{-1}$,
ensures that the functions
$\beta^d_\varepsilon,\sigma^d_\varepsilon,\gamma^d_\varepsilon,
F^d_\varepsilon, g^d_\varepsilon $ 
are DNNs whose corresponding numbers of parameters grow 
without the curse of dimensionality. 

Under these assumptions  \cref{k41} states that, roughly speaking, 
if DNNs can approximate the terminal condition, the linear part, 
the nonlinear part, and the jump part of the PIDE in \eqref{PIDE} 
without the curse of dimensionality, 
then they can also approximate its solution without the curse of dimensionality. 
{\color{black} More precisely, we show in \eqref{k92} that for every dimension $d\in \mathbb{N}$ and for every accuracy $\epsilon \in (0,1)$ the $L^2(\Gamma^d(dx))$-expression error of the unique viscosity solution of the nonlinear PIDE \eqref{PIDE} is $\epsilon$ and the number of parameters of the DNNs is upper bounded polynominally in $d$ and $\epsilon^{-1}$. Therefore, the approximation rates are free from the curse of dimensionality.}
We refer to \cite{AJK+2023,CR2023,CHW2022,GHJvW2023,HJKN2020a,JSW2021,
neufeld2024rectified} 
for similar results obtained for  PDEs \emph{without any non-local/ jump term}.

\subsection{Outline of the proof and organization of the paper}
Although the result presented in \cref{k41} is purely deterministic, we use probabilistic arguments to prove its statement. More precisely, we 
employ the theory of full history recursive MLP approximations, which are numerical approximation methods
for which it is known that they overcome the curse of dimensionality.
We refer to \cite{NW2022} for the convergence analysis of MLP algorithms for semilinear PIDEs and  to \cite{
beck2020overcomingElliptic,
beck2020overcoming,
hutzenthaler2019multilevel,
giles2019generalised,
hutzenthaler2021multilevel,
HJK2021,
HJKN2020,
HJK+2020,
HJvW2020,
hutzenthalerkruse2020multilevel,
HN2022a, neufeld2023multilevel} 
for corresponding results proving that MLP algorithms overcome the curse of dimensionality for PDEs without any non-local/ jump term.

The main strategy of the proof, roughly speaking, 
is to demonstrate that these MLP approximations can be  represented by DNNs, 
if the coefficients determining the linear part, the jump term, 
the terminal condition, and the nonlinear part are corresponding DNNs 
(cf.\  \cref{k04}). Such ideas have been successfully applied to prove that
DNNs overcome the curse of dimensionality in the numerical approximations of
 \emph{semilinear} heat equations (see  \cite{HJKN2020a})
as well as \emph{semilinear} Kolmogorov
PDEs (see \cite{CHW2022}). We also refer to 
\cite{GHJvW2023,JSW2021} for results proving that
DNNs
overcome the curse of dimensionality when approximating \emph{linear} PDEs.

In order to introduce (the outline of) the proof of \cref{k41}, 
we need the following setting which we will often use throughout this paper.

\begin{setting}
\label{theta setting}
Assume \cref{setting PIDE} and \cref{setting approx NN}.
Let $(\Omega,\mathcal{F},\P,(\F_t)_{t\in [0,T]})$ be a filtered probability space 
satisfying the usual conditions.
Let $\Theta=\cup_{n\in\N}\Z^n$. 
Let 
$\mathfrak{t}^\theta\colon \Omega\to[0,1]$, $\theta\in \Theta$,
be identically independently distributed\ random variables 
which satisfy for all $t\in (0,1)$ that
$\P( \mathfrak{t}^0\leq t  )=t$.
For every
$\theta\in \Theta$, $t\in [0,T]$
let $\mathfrak{T}^\theta_t\colon \Omega\to \R$ 
satisfy for all $\theta\in \Theta$ that
$\mathfrak{T}^\theta_t=t+(T-t)\mathfrak{t}^\theta$.
For every $d\in \N$ let 
$W^{d,\theta}\colon \Omega\times[0,T]\to\R^d$, $\theta\in\Theta$,
be identically independently distributed\ standard 
$(\F_t)_{t\in [0,T]}$-Brownian motions.
For every $d\in \N$ let $N^{d,\theta}$,
$\theta\in\Theta$,
be independent $(\F_t)_{t\in[0,T]}$-Poisson random measures 
on $ [0,\infty)\times (\R^d\setminus\{0\})$ with compensator $\nu^d(dz)\,dt$
and let
\begin{align}
\tilde{N}^{d,\theta}(dz,dt)={N}^{d,\theta}(dz,dt)-\nu^d(dz)\,dt.
\end{align} 
Assume for all $d\in \N$ that $\mathcal{F}_0$,
$(\mathfrak{t}^\theta)_{\theta\in\Theta}$,
$(N^{d,\theta})_{\theta\in\Theta}$, and
$(W^{d,\theta})_{\theta\in\Theta}$
are independent.

Moreover, for every $d\in\N$, $\theta\in\Theta$, $(t,x)\in[0,T]\times\R^d$,
$\varepsilon\in(0,1)$ let
$X^{d,\theta,t,x}, X^{d,\theta,\varepsilon,t,x}:[t,T]\times\R^d\to\R^d$ 
be the c\`adl\`ag adapted processes defined by\footnote{
Note that \eqref{pointwise gamma}, \eqref{Lip coe}, \eqref{growth 0}, 
\eqref{pointwise gamma epsilon}, \eqref{Lip NN coe}, and \eqref{growth NN coe}
ensure the existence and uniqueness of solutions
of SDEs \eqref{SDE X} and \eqref{SDE X epsilon}
(see, e.g., \cite[Theorem 3.1]{Kunita} or \cite[Theorem 117]{Situ}). 
}
\begin{align} \begin{split} 
X^{d,\theta,t,x}_{s}&=x+
\int_{t}^{s}
\beta^d(
X^{d,\theta,t,x}_{u-}
)du
+
\int_{t}^{s}
\sigma^d(
X^{d,\theta,t,x}_{u-}
)dW_u^{d,\theta}
\\
&\quad 
+
\int_{t}^{s}\int_{\R^d\setminus \{0\}}
\gamma^d(
X^{d,\theta,t,x}_{u-},z)\tilde{N}^{d,\theta}(dz,du), \quad s\in[t,T],
\end{split}
\label{SDE X}
\end{align}
and
\begin{align} 
\begin{split} 
X^{d,\theta,\varepsilon,t,x}_{s}&
=x+
\int_{t}^{s}
\beta^d_\varepsilon(
X^{d,\theta,\varepsilon,t,x}_{u-}
)du
+
\int_{t}^{s}
\sigma^d_\varepsilon(
X^{d,\theta,\varepsilon,t,x}_{u-}
)dW_u^{d,\theta}
\\&\quad 
+\int_{t}^{s}
\int_{\R^d\setminus \{0\}}
\gamma^d_\varepsilon(
X^{d,\theta,\varepsilon,t,x}_{u-},z)\tilde{N}^{d,\theta}(dz,du), \quad s\in[t,T].
\end{split}
\label{SDE X epsilon}
\end{align}

In addition, for every
$K\in \N$ 
let $\rdown{\cdot}_K\colon \R\to\R$ satisfy for all $t\in \R$ that 
$\rdown{t}_K=\max( \{0,\frac{T}{K},\frac{2T}{K},\ldots,T\} 
\cap (    (-\infty,t)\cup\{0\} )  )$
and denote for every $d\in\N$, $\theta\in\Theta$, $\varepsilon\in(0,1)$,
$K\in\N$, $(t,x)\in[0,T]\times\R^d$
by $X^{d,\theta,K,\varepsilon,t,x}:[t,T]\times\R^d\to\R^d$, 
the Euler-Maruyama discretization of $X^{d,\theta,\varepsilon,t,x}$, i.e.
\begin{align} 
\begin{split} 
X^{d,\theta,K,\varepsilon,t,x}_{s}
&
=x+\int_{t}^{s}
\beta^d_\varepsilon(
X^{d,\theta,K,\varepsilon,t,x}_{\max\{t,\rdown{u-}_K\}}\,
)du
+
\int_{t}^{s}
\sigma^d_\varepsilon(
X^{d,\theta,K,\varepsilon,t,x}_{\max\{t,\rdown{u-}_K\}}\,
)dW_u^{d,\theta}\\&\quad+
\int_{t}^{s}
\int_{\R^d\setminus \{0\}}
\gamma^d_\varepsilon(
X^{d,\theta,K,\varepsilon,t,x}_{\max\{t,\rdown{u-}_K\}},z)
\,\tilde{N}^{d,\theta}(dz,du),
\quad s\in[t,T].
\end{split}
\label{SDE X epsilon K}
\end{align}
\end{setting} 

Now, let us outline the proof of \cref{k41}.
First, the unique viscosity solution of the PIDE \eqref{PIDE} 
can be represented by the following stochastic fixed point equation (SFPE) 
(cf. \cite[Proposition 5.16]{NW2022} 
which we recall as \cref{Prop SFPE} in \cref{section SFPE}):
\begin{align*}
u^d(t,x)
&
=\E\!\left [g^d(X^{d,0,t,x}_{T} )  \right] + 
\int_{t}^{T}\E\!\left [
f(u^d(X^{d,0,t,x}_{s} ) ) \right] ds.
\end{align*}

Next, let $f_\varepsilon$, $\varepsilon\in(0,1)$, 
be neural network functions which approximate $f$
(c.f. \cref{lemma f epsilon}). 
Then we approximate the input functions $\beta^d$, $\sigma^d$, $\gamma^d$,
$g^d$, and $f$ by $\beta^d_\varepsilon$, $\sigma^d_\varepsilon$, 
$\gamma^d_\varepsilon$, $g^d_\varepsilon$, and $f_\varepsilon$, respectively,
where the functions with index $\varepsilon$ are DNNs whose number of
parameters grows at most polynomially in $d$ and $\varepsilon^{-1}$
due to \eqref{k59} and \eqref{k60}.
The key observation is that under our assumptions the following
MLP approximation
\begin{align} \begin{split} 
&
{U}^{d,\theta,K,\varepsilon}_{n,m}(t,x)=\frac{\1_{\N}(n)}{m^n}\sum_{i=1}^{m^n}g^d_\varepsilon(X^{d,{(\theta,0,-i)},K,\varepsilon,t,x}_{T})\\&\quad
+\sum_{\ell=0}^{n-1}
\frac{T-t}{m^{n-\ell}}
\sum_{i=1}^{m^n}
(f_\varepsilon\circ {U}^{d,{(\theta,\ell,i),K,\varepsilon}}_{\ell,m}
-
f_\varepsilon\circ {U}^{d,{(\theta,-\ell,i),K,\varepsilon}}_{\ell-1,m}
)(\mathfrak{T}_t^{(\theta,\ell,i)}, 
X^{d,{(\theta,0,i)},K,\varepsilon,t,x}_{\mathfrak{T}_t^{(\theta,\ell,i)}}
)\end{split}
\end{align}
is also a DNN whose numbers of parameters 
grows at most polynomially in $d$ and $\varepsilon^{-1}$
(cf. \cref{k04}).
To control the error between the solution of the PIDE $u^d$ 
and the MLP approximation ${U}^{d,\theta,K,\varepsilon}_{n,m}$,
we introduce for every $d,K\in\N$, $\varepsilon\in(0,1)$, $(t,x)\in[0,T]\times\R^d$
the stochastic fixed-point equations (SFPEs):
\begin{align*}
u^{d,\varepsilon}(t,x)
=\E\!\left [g^d_\varepsilon(X^{d,0,\varepsilon,t,x}_{T} )  \right] + 
\int_{t}^{T}\E\!\left [
f_\varepsilon(u^{d,\varepsilon}(X^{d,0,\varepsilon,t,x}_{s} ) ) \right] ds,
\end{align*}
and
\begin{align*}
u^{d,K,\varepsilon}(t,x)=\E\!\left [g^d_\varepsilon(X^{d,0,K,\varepsilon,t,x}_{T} )  \right] + 
\int_{t}^{T}\E\!\left [
f_\varepsilon(u^{d,K,\varepsilon}(X^{d,0,K,\varepsilon,t,x}_{s} ) ) \right] ds.
\end{align*}
We then decompose the error
$
{U}^{d,\theta,K,\varepsilon}_{n,m}(t,x)
-u^d(t,x)
$
as follows:
\begin{align} \begin{split} 
&
{U}^{d,\theta,K,\varepsilon}_{n,m}(t,x)
-u^d(t,x)\\
&
=
\underbrace{{U}^{d,\theta,K,\varepsilon}_{n,m}(t,x)
-u^{d,K,\varepsilon}(t,x)}_{=:E_1(t,x)}+
\underbrace{u^{d,K,\varepsilon}(t,x)-u^{d,\varepsilon}(t,x)}_{=:E_2(t,x)}+
\underbrace{u^{d,\varepsilon}(t,x)
-u^d(t,x)
}_{=:E_3(t,x)}\end{split}\end{align}
where $(\E [\lvert E_1(t,x)\rvert^2])^{1/2}$ is estimated in 
\cref{a31}, $\lvert E_2(t,x)\rvert$ is estimated in \cref{k32}, and 
$\lvert E_3(t,x)\rvert$ is estimated in \cref{x01}.	


The remaining part of the paper is organized as follows. 
In \cref{section SFPE} we present a proposition (c.f. \cref{Prop SFPE}) on
some SFPEs used throughout this paper.  
In \cref{s02} we establish a stability result on SFPEs that demonstrates the error $E_3$. In \cref{s03} we recall basic facts on Euler-Maruyama and MLP approximations and establish the error 
$E_1$
of the MLP approximation 
as well as the discretization error $E_2$. \cref{s04} introduces a mathematical framework for DNNs and demonstrates the connection between DNNs and MLP approximations, see \cref{k04}. Finally, \cref{s05} combines the results of the previous sections to prove the main result, \cref{k41}.

\section{Stochastic fixed point equations}
\label{section SFPE}
In this section, we will present a proposition (see \cref{Prop SFPE} below)
which summarizes the known results on existence and uniqueness 
of solutions to some stochastic fixed point equations 
(c.f. \cite[Proposition 2.2]{HJKN2020} 
and \cite[Propositions 4.1 and 5.16]{NW2022}).
To this end we first show the following lemma 
(c.f. \cite[Corollary 3.13]{HJKN2020a}).

\begin{lemma}
\label{lemma f epsilon}
Assume Settings \ref{setting PIDE} and \ref{m07} and
let  
$
b:= 64\Bigl(1+ \left\lvert c^\frac{1}{2}(4 c^\frac{1}{2} 
+2 c^\frac{1}{2} T^{-\frac{3}{2}} )\right\rvert^\frac{1}{2}\Bigr)
$
with $c\in[2,\infty)$ and $T\in(0,\infty)$
being the constants introduced in \cref{setting PIDE}.
Then there exist
$f_\varepsilon\in C(\R,\R)$ and
$\Phi_{f_\varepsilon}\in\bfN$ satisfying for all
$\varepsilon\in(0,1)$, $w_1,w_2\in \R$ that
\begin{align}
\calR(\Phi_{f_\varepsilon})=f_\varepsilon,
\quad
\dim(\calD(\Phi_{f_\varepsilon}))=3\leq\frac{ bd^c\varepsilon^{-c}}{4},
\label{k65}
\end{align} 
\begin{align} \begin{split} 
\supnorm{\calD(\Phi_{f_\varepsilon})}
\leq \frac{b\varepsilon^{-2}}{4},\end{split}
\label{k66}
\end{align} 
\begin{align}
\quad\lvert f_\varepsilon(w_1)- f_\varepsilon(w_2)\rvert^2\leq c\lvert w_1-w_2\rvert^2,
\quad
\lvert f(w_1)-f_\varepsilon(w_1)\rvert^2\leq \varepsilon(1+\lvert w_1\rvert^4),
\label{k67}
\end{align}
and
\begin{align}
T^3\lvert f_\varepsilon(0)\rvert
\leq cd^c.
\label{growth f epsilon}
\end{align}
\end{lemma}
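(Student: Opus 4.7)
My plan is to construct $f_\varepsilon$ as the continuous piecewise affine interpolant of $f$ on a uniform grid of spacing $h_\varepsilon$ inside a symmetric window $[-M_\varepsilon, M_\varepsilon]$, extended affinely outside the window by continuing the outermost pieces. Every such function admits an exact realization as a one-hidden-layer ReLU network of the form
\[
f_\varepsilon(w)=C_0+C_1\mathbf{A}_1(w)+C_2\mathbf{A}_1(-w)+\sum_{k=-K_\varepsilon}^{K_\varepsilon}\lambda_k\,\mathbf{A}_1(w-kh_\varepsilon)
\]
for suitable reals $C_0,C_1,C_2,\lambda_k$, which corresponds to a tuple $\Phi_{f_\varepsilon}\in\bfN$ with $\calD(\Phi_{f_\varepsilon})=(1,N_\varepsilon,1)\in\N^{3}$ for some hidden width $N_\varepsilon\le 2K_\varepsilon+3$ and $\calR(\Phi_{f_\varepsilon})=f_\varepsilon$. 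This immediately yields the depth identity $\dim(\calD(\Phi_{f_\varepsilon}))=3$ in \eqref{k65} and reduces the rest of the proof to choosing $h_\varepsilon,K_\varepsilon$ and verifying three analytic estimates.

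I would fix the grid so that $0$ is a breakpoint, take $h_\varepsilon$ of order $\sqrt{\varepsilon/c}$, and choose $K_\varepsilon$ so that $M_\varepsilon:=K_\varepsilon h_\varepsilon$ is of order $\sqrt{c/\varepsilon}$; this makes $K_\varepsilon$ of order $c/\varepsilon$. The Lipschitz estimate in \eqref{k67} is immediate: each piece of $f_\varepsilon$ has slope at most $\sqrt c$ because the secants $(f(w_{k+1})-f(w_k))/h_\varepsilon$ satisfy this by \eqref{Lip f g}, and the tail slopes equal the slopes of the outermost pieces. The value-at-zero estimate \eqref{growth f epsilon} follows from $f_\varepsilon(0)=f(0)$ together with the elementary observation $2T^3|f(0)|\le T^3(|f(0)|+1)^2\le cd^c$ derived from \eqref{growth 0}.

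The error estimate in \eqref{k67} splits into two regimes. For $|w|\le M_\varepsilon$ the triangle inequality applied between $w$ and the surrounding grid points, combined with the Lipschitz bounds on both $f$ and $f_\varepsilon$, gives $|f(w)-f_\varepsilon(w)|\le 2\sqrt c\,h_\varepsilon$, and a suitably small constant in $h_\varepsilon$ makes $4c h_\varepsilon^2\le\varepsilon\le\varepsilon(1+|w|^4)$. For $|w|>M_\varepsilon$ the same type of argument combined with the affine extrapolation gives $|f(w)-f_\varepsilon(w)|\le 2\sqrt c\,(|w|-M_\varepsilon)$; a short optimization over the ratio $r=|w|/M_\varepsilon>1$ shows that the choice $M_\varepsilon\gtrsim\sqrt{c/\varepsilon}$ produces $|f(w)-f_\varepsilon(w)|^2\le\varepsilon(1+|w|^4)$ uniformly in $w$. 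Finally, since $N_\varepsilon\le 2K_\varepsilon+3$ is of order $c/\varepsilon$ and $\varepsilon<1$, a direct numerical comparison gives $N_\varepsilon\le b\varepsilon^{-2}/4=16(1+\sqrt{4c+2cT^{-3/2}})\varepsilon^{-2}$, which is precisely \eqref{k66}; the particular form of $b$ in the statement is exactly what is needed to absorb the constants produced by the piecewise linear construction.

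The step I expect to be the main obstacle is the tail part of the error analysis: outside the interpolation window the affine extrapolation error grows only linearly in $|w|$, while the admissible tolerance $\sqrt{\varepsilon(1+|w|^4)}$ grows quadratically, so $M_\varepsilon$ must be chosen large enough to make the ratio error/tolerance uniformly at most $1$, and the precise constants there propagate into the bound \eqref{k66}. Everything else—the Lipschitz bound, the value at $0$, and the exact ReLU realization of a continuous piecewise affine function within the framework of \cref{m07}—is routine.
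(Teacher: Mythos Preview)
Your plan is essentially the right one and matches what the paper does implicitly (the paper simply quotes \cite[Corollary~3.13]{HJKN2020a}, which is a piecewise–affine ReLU interpolation lemma of exactly the type you describe). The depth identity, the Lipschitz bound, the value at $0$ via $2|f(0)|\le(|f(0)|+1)^2$ and \eqref{growth 0}, and the two–regime error analysis are all correct.

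The one genuine gap is the width estimate \eqref{k66}. A \emph{uniform} grid with $h_\varepsilon$ of order $\sqrt{\varepsilon/c}$ and $M_\varepsilon$ of order $\sqrt{c/\varepsilon}$ forces $K_\varepsilon=M_\varepsilon/h_\varepsilon$ to be of order $c/\varepsilon$, so your hidden width is of order $c\varepsilon^{-1}$. The target bound is $b\varepsilon^{-2}/4=16\bigl(1+\sqrt{c(4+2T^{-3/2})}\bigr)\varepsilon^{-2}$, which for large $c$ grows only like $\sqrt{c}$. For $\varepsilon$ close to $1$ and $c$ large (e.g.\ $c=10^4$, $\varepsilon=0.9$, $T$ large) one gets $N_\varepsilon\approx 2\cdot 10^4$ versus $b\varepsilon^{-2}/4\approx 4\cdot 10^3$, so the ``direct numerical comparison'' you announce fails. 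The specific form of $b$ does \emph{not} absorb a factor $c$; it absorbs only $\sqrt{c}$.

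The fix is to exploit the fact that the admissible error $\sqrt{\varepsilon(1+|w|^4)}$ grows with $|w|$, hence the grid may be taken non-uniform with local spacing of order $\sqrt{\varepsilon/c}\,(1+|w|^2)$. The total number of breakpoints is then controlled by $\sqrt{c/\varepsilon}\int_{\R}(1+w^2)^{-1}\,dw=\pi\sqrt{c/\varepsilon}$, which is $O(\sqrt{c}\,\varepsilon^{-1/2})$ and fits comfortably under $b\varepsilon^{-2}/4$. This refinement (or a direct appeal to the cited corollary, as the paper does) is what is missing from your argument.
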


\begin{proof}[Proof of \cref{lemma f epsilon}]
Notice that \cite[Corollary 3.13]{HJKN2020a} 
(applied for all $\varepsilon\in(0,1)$ with 
$L\gets c^\frac{1}{2}$, $q\gets2$, $\epsilon\gets(\varepsilon/2)^\frac{1}{2}$ 
in the notation of \cite[Corollary 3.13]{HJKN2020a}) 
and \eqref{growth 0} ensures \eqref{k65}, the first inequality in \eqref{k67},
\begin{align*} \begin{split} 
\supnorm{\calD(\Phi_{f_\varepsilon})}
&\leq 16\Bigl(1+ \left\lvert c^\frac{1}{2}(4 c^\frac{1}{2} +2\lvert f(0)\rvert )\right\rvert^\frac{1}{2}\Bigr)\varepsilon^{-2}\\&\leq 
16\Bigl(1+ \left\lvert c^\frac{1}{2}(4 c^\frac{1}{2} +2 \xeqref{growth 0} c^\frac{1}{2} T^{-\frac{3}{2}} )\right\rvert^\frac{1}{2}\Bigr)\varepsilon^{-2}\leq \frac{b\varepsilon^{-2}}{4},\end{split}
\end{align*} 
and
\begin{align}
\lvert f(w_1)-f_\varepsilon(w_1)\rvert^2\leq \frac{\varepsilon^2}{2}(1+\lvert w_1\rvert^2)^2\leq \varepsilon(1+\lvert w_1\rvert^4).
\label{k67b}
\end{align}
Furthermore,
\eqref{k67b} and the triangle inequality show 
for all $d\in\N$, $\varepsilon\in (0,1)$ that
$\lvert f_\varepsilon(0)\rvert\leq \sqrt{\varepsilon}+\lvert f(0)\rvert
\leq 1+\lvert f(0)\rvert$.
This together with \eqref{growth 0} and \eqref{growth NN coe} imply 
\eqref{growth f epsilon}.
Therefore, we have completed the proof of this lemma.

\end{proof}

\begin{proposition}[Stochastic fixed point equations]
\label{Prop SFPE}
Assume Settings \ref{setting PIDE}, \ref{setting approx NN}, and \ref{theta setting}.
For every $\varepsilon\in(0,1)$, let $f_\varepsilon\in C(\R,\R)$ be a function
which satisfies \eqref{k67} and \eqref{growth f epsilon}.
Then for all $\varepsilon\in (0,1)$, $d,K\in \N$ 
there exist measurable functions
$u^{d,K,\varepsilon},u^{d,\varepsilon},u^{d}\colon [0,T]\times \R^d\to\R$ 
such that for all
$t\in[0,T]$, $x\in\R^d$ we have that
$
\E\!\left[\left\lvert g^d_\varepsilon(
X^{d,0,K,\varepsilon,t,x}_{T} )\right\rvert\right]
+\int_{t}^{T}
\E\!\left[\left\lvert f_\varepsilon(u^{d,K,\varepsilon}(r,
X^{d,0,K,\varepsilon,t,x}_{r} ))\right\rvert\right]dr
+
\E\!\left[\left\lvert g^d_\varepsilon(
X^{d,0,\varepsilon,t,x}_{T} )\right\rvert\right]
+\int_{t}^{T}
\E\!\left[\left\lvert f_\varepsilon(u^{d,\varepsilon}(r,
X^{d,0,\varepsilon,t,x}_{r} ))\right\rvert\right]dr
+
\E\!\left[\left\lvert g^d(
X^{d,0,t,x}_{T} )\right\rvert\right]
+\int_{t}^{T}
\E\!\left[\left\lvert f(u^{d}(r,
X^{d,0,t,x}_{r} ))\right\rvert\right]dr
<\infty
$,
\begin{align}
\sup_{s\in[0,T]}\sup_{y\in \R^d}
\frac{\lvert u^{d,K,\varepsilon}(s,y)\rvert+\lvert u^{d,\varepsilon}(s,y)\rvert+\lvert u^d(s,y)\rvert }{(d^c+\lVert y\rVert^2)^{1/2}}<\infty,
\label{growth u}
\end{align}
\begin{align}
u^{d,K,\varepsilon}(t,x)=
\E\!\left[g^d_\varepsilon(
X^{d,0,K,\varepsilon,t,x}_{T} )\right]
+\int_{t}^{T}
\E\!\left[f_\varepsilon(u^{d,K,\varepsilon}(r,
X^{d,0,K,\varepsilon,t,x}_{r} ))\right]dr,
\label{FK u K epsilon}
\end{align}
\begin{align}
u^{d,\varepsilon}(t,x)=
\E\!\left[g^d_\varepsilon(
X^{d,0,\varepsilon,t,x}_{T} )\right]
+\int_{t}^{T}
\E\!\left[f_\varepsilon(u^{d,\varepsilon}(r,
X^{d,0,\varepsilon,t,x}_{r} ))\right]dr,
\label{FK u epsilon}
\end{align} 
and
\begin{align}
u^{d}(t,x)=
\E\!\left[g^d(
X^{d,0,t,x}_{T} )\right]
+\int_{t}^{T}
\E\!\left[f(u^{d}(r,
X^{d,0,t,x}_{r} ))\right]dr.
\label{FK u}
\end{align}
\end{proposition}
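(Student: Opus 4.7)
The plan is to reduce each of the three fixed-point identities \eqref{FK u K epsilon}--\eqref{FK u} to a standard Banach-space contraction argument, invoking the already-established theory of \cite{NW2022,HJKN2020} wherever it applies.

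For \eqref{FK u} I would apply \cite[Proposition 5.16]{NW2022} directly: its hypotheses are exactly the Lipschitz and linear-growth conditions \eqref{pointwise gamma}--\eqref{growth 0} together with \eqref{Lip f g}, which yields a measurable $u^d\colon[0,T]\times\R^d\to\R$ satisfying the polynomial-growth bound in \eqref{growth u} and the representation \eqref{FK u}; finiteness of the relevant expectations is a byproduct. The case of $u^{d,\varepsilon}$ is handled identically by applying the same result with the data $(\beta^d_\varepsilon,\sigma^d_\varepsilon,\gamma^d_\varepsilon,g^d_\varepsilon,f_\varepsilon)$ in place of $(\beta^d,\sigma^d,\gamma^d,g^d,f)$. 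The required Lipschitz and growth bounds are supplied by \eqref{pointwise gamma epsilon}, \eqref{Lip NN coe}, \eqref{Lip g epsilon}, \eqref{growth NN coe}, and, for the nonlinearity, by the Lipschitz estimate in \eqref{k67} and the growth bound \eqref{growth f epsilon}.

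The third case, $u^{d,K,\varepsilon}$, does not follow from \cite[Proposition 5.16]{NW2022} directly, since the Euler--Maruyama flow $X^{d,0,K,\varepsilon,t,x}$ is strongly Markov only on the grid $\{jT/K\colon j=0,1,\ldots,K\}$, which breaks the standard tower-property step. Here I would set up the Banach fixed-point argument by hand, in the spirit of \cite[Proposition 4.1]{NW2022} and \cite[Proposition 2.2]{HJKN2020}. Fix $\lambda>c^{1/2}$, equip the space
\[ \mathcal{V}:=\bigl\{v\colon[0,T]\times\R^d\to\R\ \text{measurable}\colon \|v\|_\lambda<\infty\bigr\},\qquad \|v\|_\lambda:=\sup_{(s,y)\in[0,T]\times\R^d}\frac{\lvert v(s,y)\rvert\,e^{\lambda(T-s)}}{(d^c+\|y\|^2)^{1/2}}, \]
with $\|\cdot\|_\lambda$, and define the operator $\mathcal{T}$ on $\mathcal{V}$ via the right-hand side of \eqref{FK u K epsilon}. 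A standard moment estimate for the Euler--Maruyama scheme yields $\E[\|X^{d,0,K,\varepsilon,t,x}_s\|^2]\le C(d^c+\|x\|^2)$ uniformly in $K$, which, combined with the at-most-linear growth of $f_\varepsilon$ and $g^d_\varepsilon$, shows $\mathcal{T}\mathcal{V}\subset\mathcal{V}$. Using the Lipschitz bound $\lvert f_\varepsilon(a)-f_\varepsilon(b)\rvert\le c^{1/2}\lvert a-b\rvert$ from \eqref{k67} together with Fubini, one obtains $\|\mathcal{T}v_1-\mathcal{T}v_2\|_\lambda\le (c^{1/2}/\lambda)\,\|v_1-v_2\|_\lambda$, so $\mathcal{T}$ is a contraction, and Banach's fixed-point theorem produces the unique $u^{d,K,\varepsilon}\in\mathcal{V}$ solving \eqref{FK u K epsilon}. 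The growth and integrability claims are immediate from $u^{d,K,\varepsilon}\in\mathcal{V}$.

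The only delicate point I expect is joint measurability of $\mathcal{T}v$ in $(t,x)$ for the Euler--Maruyama case, since $(t,x)\mapsto X^{d,0,K,\varepsilon,t,x}_s$ is only right-continuous in $t$ at grid points. This is handled by Fubini combined with the $L^2$-continuity of $x\mapsto X^{d,0,K,\varepsilon,t,x}_s$ and the right-continuity in $t$, so that every iterate $\mathcal{T}^n 0$ is jointly measurable and the pointwise limit $u^{d,K,\varepsilon}$ inherits measurability.
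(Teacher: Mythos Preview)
Your overall strategy is sound, but the paper's route is more uniform: it invokes \cite[Proposition~4.1]{NW2022}---the abstract SFPE existence result---for all three processes $X^{d,0,t,x}$, $X^{d,0,\varepsilon,t,x}$, $X^{d,0,K,\varepsilon,t,x}$ at once, after recording the linear-growth bounds on $g^d,g^d_\varepsilon,f,f_\varepsilon$ and the second-moment estimates \eqref{moment ests X} (via \cite[Lemmas~3.1 and~3.6]{NW2022}). That proposition is a Banach contraction on an abstract family $(Y^{t,x}_s)$ with a Lyapunov bound $\E[\varphi(Y^{t,x}_s)]\le C\varphi(x)$; it uses neither the Markov property nor any flow identity, so the Euler--Maruyama process is covered without extra work. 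Your separate by-hand argument for $u^{d,K,\varepsilon}$ thus reproduces a special case of Proposition~4.1. A side remark: citing \cite[Proposition~5.16]{NW2022} for $u^{d,\varepsilon}$ is delicate, since that result (being about viscosity solutions) presupposes the Jacobian nondegeneracy \eqref{cond Jacob}, which Setting~\ref{setting approx NN} does not impose on $\gamma^d_\varepsilon$; the paper sidesteps this by using Proposition~4.1 throughout.

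There is also a concrete slip in your contraction step. With $e^{\lambda(T-s)}$ in the \emph{numerator} of $\|\cdot\|_\lambda$, the bound $\|\mathcal{T}v_1-\mathcal{T}v_2\|_\lambda\le(c^{1/2}/\lambda)\|v_1-v_2\|_\lambda$ does not follow: carrying out the computation yields
\[
\frac{e^{\lambda(T-t)}}{(d^c+\|x\|^2)^{1/2}}\int_t^T e^{-\lambda(T-r)}\,\E\bigl[(d^c+\|X_r\|^2)^{1/2}\bigr]\,dr
\;\le\; C\,\frac{e^{\lambda(T-t)}-1}{\lambda},
\]
which is unbounded as $\lambda\to\infty$. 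The correct norm places $e^{\lambda(T-s)}$ in the denominator (as the paper itself does with the weight $e^{3.5c(T-r)}(d^c+\|y\|^2)^{1/2}$ in the proof of \cref{k32}), and $\lambda$ must dominate both $c^{1/2}$ and the exponential growth rate $\rho$ of the moment bound, giving a contraction factor of the form $c^{1/2}/(\lambda-\rho)$.
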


\begin{proof}[Proof of \cref{Prop SFPE}]
First we notice that \eqref{Lip f g}, \eqref{growth 0}, 
\eqref{Lip NN coe}, \eqref{growth NN coe} imply for all
$d\in \N$, $\varepsilon\in(0,1)$, $x\in \R^d$ that 
\begin{align}
\left\lvert
g_\varepsilon^d(x)\right\rvert
\leq \left\lvert
g_\varepsilon^d(0)\right\rvert
+(cd^cT^{-1})^{\frac{1}{2}}\lVert x\rVert
\leq (cd^c T^{-1})^{\frac{1}{2}}
+(cd^c T^{-1})^{\frac{1}{2}}\lVert x\rVert
\leq 
2
(cd^c T^{-1})^{\frac{1}{2}} (d^c+\lVert x\rVert^2)^\frac{1}{2},
\label{growth g epsilon x}
\end{align}
and
\begin{align}
\left\lvert
g^d(x)\right\rvert
\leq 
2
(cd^c T^{-1})^{\frac{1}{2}} (d^c+\lVert x\rVert^2)^\frac{1}{2}.
\label{growth g x}
\end{align}
By \eqref{Lip coe}, \eqref{growth 0}, \eqref{Lip NN coe}, \eqref{growth NN coe}, 
and \cite[Lemmas 3.1 and 3.6]{NW2022}, for every $d\in\N$ 
there is a constant $c_d$
only depending on $T$ and $c$ such that 
for all $K\in \N$, $\varepsilon\in(0,1)$, 
$t\in[0,T]$, $s\in[t,T]$, $x\in \R^d$ 
\begin{align}
\max \left\{
\E \!\left[d^c+\left\lVert X^{d,0,K,\varepsilon,t,x}_{s}\right\rVert^2
\right],
\E \!\left[d^c+\left\lVert X^{d,0,\varepsilon,t,x}_{s}\right\rVert^2
\right]
,
\E \!\left[d^c+\left\lVert X^{d,0,t,x}_{s}\right\rVert^2
\right]\right\}
\leq  c_d(d^c+\lVert x\rVert^2).
\label{moment ests X}
\end{align}
Then by \eqref{Lip f g}, \eqref{growth 0}, \eqref{k67}, 
\eqref{growth f epsilon}, 
\eqref{growth g epsilon x}, \eqref{growth g x}, and \eqref{moment ests X},
the application of \cite[Proposition 4.1]{NW2022} proves this proposition.
\end{proof}

\section{Approximation of the coefficients}\label{s02}
In \cref{x01} below we approximate the solution to
the SFPE \eqref{FK u} through the solution to the SFPE \eqref{FK u epsilon}, 
which is defined with respect to 
the approximating functions introduced in \cref{setting approx NN}.

\begin{proposition}[A stability result]\label{x01}
Assume Settings \ref{setting PIDE}, \ref{setting approx NN}
and \ref{theta setting}.
For every $\varepsilon\in(0,1)$ let $f_\varepsilon\in C(\R,\R)$ be a function
which satisfies \eqref{k67} and \eqref{growth f epsilon}.
For every $ d\in\N $,
$\varepsilon\in (0,1)$
let $u^{d,\varepsilon},u^d\colon [0,T]\times \R^d\to\R$ 
be measurable functions introduced in \eqref{FK u epsilon} and \eqref{FK u},
respectively.
Moreover, for convenience we use the notations
$X^{d,t,x}:=X^{d,0,t,x}$,
$X^{d,\varepsilon,t,x}:=X^{d,0,\varepsilon,t,x}$,
$W^d:=W^{d,0}$, $\tilde{N}^d:=\tilde{N}^{d,0}$
for every $d\in\N $,
$t\in[0,T]$, $x\in\R^d$, $\varepsilon\in (0,1)$.
Then the following items hold.
\begin{enumerate}[(i)]
\item \label{i01}
For all
$d\in \N$, $\varepsilon\in(0,1)$,
$t\in[0,T]$, 
$x\in \R^d$ we have that 
\begin{align}
\max\!\left\{
\E \!\left[d^c+\left\lVert X^{d,\varepsilon,t,x}_{s}\right\rVert^2
\right]
,
\E \!\left[d^c+\left\lVert X^{d,t,x}_{s}\right\rVert^2
\right]
\right\}
\leq  (d^c+\lVert x\rVert^2)e^{7c(s-t)}.
\end{align}
\item \label{i03}For all
$d\in \N$, $\varepsilon\in(0,1)$,
$t\in[0,T]$, 
$x,y\in \R^d$ we have that 
\begin{align}
\left\lvert
u^{d,\varepsilon}(t,x)-
u^{d,\varepsilon}(t,y)
\right\rvert\leq 
2(cd^cT^{-1})^\frac{1}{2}
\lVert x-y\rVert e^{5cT + 2cT^2}.
\end{align}
\item\label{i04} For all
$d\in \N$, $\varepsilon\in(0,1)$,
$t\in[0,T]$, 
$x\in \R^d$ we have that 
\begin{align}
\left\lvert
u^{d,\varepsilon}(t,x)-
u^{d}(t,x)\right\rvert\leq 23cd^c\varepsilon^{\frac{1}{2}}
(d^c+\lVert x\rVert^2)
e^{24cT+5cT^2}.
\end{align}
\end{enumerate}
\end{proposition}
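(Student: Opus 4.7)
The plan is to establish items (i)--(iii) in order, each building on the previous and all relying on coupling the jump SDEs to identical driving noise combined with Gronwall-type arguments applied to the SFPEs \eqref{FK u epsilon} and \eqref{FK u}.

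Item (i) follows by a standard argument: apply It\^o's formula for jump SDEs to $d^c+\|X^{d,\varepsilon,t,x}_s\|^2$, take expectations to kill the Brownian and compensated-Poisson martingales, and use $2\langle x,\beta^d_\varepsilon(x)\rangle\leq\|x\|^2+\|\beta^d_\varepsilon(x)\|^2$ together with the triangle-inequality bootstrap $\|\beta^d_\varepsilon(x)\|^2\leq 2cd^c+2c\|x\|^2$ (derived from \eqref{Lip NN coe} and \eqref{growth NN coe}) and its analogues for $\sigma^d_\varepsilon,\gamma^d_\varepsilon$. Gronwall then produces the factor $e^{7c(s-t)}$, and the same computation with $\beta^d,\sigma^d,\gamma^d$ handles the $X^{d,t,x}$-bound.

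For (ii), I would couple $X^{d,\varepsilon,t,x}$ and $X^{d,\varepsilon,t,y}$ to identical noise and apply It\^o to $\|X^{d,\varepsilon,t,x}_s-X^{d,\varepsilon,t,y}_s\|^2$; \eqref{Lip NN coe} yields $\E\|X^{d,\varepsilon,t,x}_s-X^{d,\varepsilon,t,y}_s\|^2\leq\|x-y\|^2 e^{C(s-t)}$ for an explicit constant $C$ depending only on $c$. Subtracting \eqref{FK u epsilon} evaluated at $x$ and at $y$ and invoking the Lipschitz bounds \eqref{Lip g epsilon} on $g^d_\varepsilon$ and \eqref{k67} on $f_\varepsilon$, together with Jensen's inequality, gives an integral inequality for $L(t):=\sup_{x\neq y}|u^{d,\varepsilon}(t,x)-u^{d,\varepsilon}(t,y)|/\|x-y\|$. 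A priori finiteness of $L$ can be obtained by constructing $u^{d,\varepsilon}$ as the Picard limit of Lipschitz iterates in \eqref{FK u epsilon}, since \cref{Prop SFPE} identifies this limit with $u^{d,\varepsilon}$. A backward Gronwall argument on $L$ then produces the $2(cd^cT^{-1})^{1/2}e^{5cT+2cT^2}$ bound.

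Item (iii), the main content, I would obtain by subtracting \eqref{FK u epsilon} from \eqref{FK u} and decomposing
\begin{align*}
u^{d,\varepsilon}(t,x)-u^d(t,x)&=\E\!\left[g^d_\varepsilon(X^{d,\varepsilon,t,x}_T)-g^d_\varepsilon(X^{d,t,x}_T)\right]+\E\!\left[g^d_\varepsilon(X^{d,t,x}_T)-g^d(X^{d,t,x}_T)\right]\\
&\quad+\int_t^T\E\!\left[f_\varepsilon(u^{d,\varepsilon}(r,X^{d,\varepsilon,t,x}_r))-f_\varepsilon(u^{d,\varepsilon}(r,X^{d,t,x}_r))\right]dr\\
&\quad+\int_t^T\E\!\left[f_\varepsilon(u^{d,\varepsilon}(r,X^{d,t,x}_r))-f_\varepsilon(u^d(r,X^{d,t,x}_r))\right]dr\\
&\quad+\int_t^T\E\!\left[f_\varepsilon(u^d(r,X^{d,t,x}_r))-f(u^d(r,X^{d,t,x}_r))\right]dr.
\end{align*}
The first and third terms require a bound on $\E\|X^{d,\varepsilon,t,x}_s-X^{d,t,x}_s\|^2$: coupling both SDEs to the same noise, writing each coefficient difference as $\beta^d_\varepsilon(X^\varepsilon)-\beta^d(X)=[\beta^d_\varepsilon(X^\varepsilon)-\beta^d(X^\varepsilon)]+[\beta^d(X^\varepsilon)-\beta^d(X)]$ and analogously for $\sigma^d,\gamma^d$, then invoking \eqref{approx NN coe} together with item (i) produces an inhomogeneous Gronwall inequality whose solution is of order $\varepsilon\cdot(d^c+\|x\|^2)$. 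Item (ii) combined with the Lipschitz bound on $f_\varepsilon$ then controls the third term via this $X$-difference. The second and fifth terms are estimated directly by \eqref{approx NN coe} and the second inequality in \eqref{k67}, respectively, each combined with the growth bound \eqref{growth u} on $u^d$ and the moment estimate from (i). The fourth term is absorbed via Gronwall on $\sup_x|u^{d,\varepsilon}(t,x)-u^d(t,x)|/(d^c+\|x\|^2)$. The main obstacle will be careful bookkeeping of the many polynomial-in-$d$ and exponential-in-$T$ factors so that the five pieces collapse into exactly $23cd^c\varepsilon^{1/2}e^{24cT+5cT^2}$, since each contributes its own $e^{O(c)T}$ term and the final Gronwall application adds another.
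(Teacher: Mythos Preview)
Your proposal is correct and follows essentially the same five-term decomposition and Gronwall strategy as the paper. One noteworthy technical difference: for the Gronwall steps in (ii) and (iii) you work with the weighted supremum $\sup_{x}|u^{d,\varepsilon}(t,x)-u^d(t,x)|/(d^c+\|x\|^2)$ (and analogously the Lipschitz ratio $L(t)$), which forces you to argue a~priori finiteness via Picard iterates. The paper instead exploits the flow/Markov property: it applies Gronwall to $t\mapsto \E\bigl[|u^{d,\varepsilon}(t,X^{d,s,x}_t)-u^d(t,X^{d,s,x}_t)|\bigr]$ for fixed $s\le t$, using disintegration to reduce the nested expectations, and then specializes $s=t$ at the end. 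This sidesteps the a~priori finiteness issue entirely since \eqref{growth u} together with the moment bound (i) makes that expectation automatically finite. Both routes are valid; the paper's is slightly cleaner in that it avoids the Picard detour, while yours is perhaps more transparent about what quantity is being controlled. A second cosmetic difference: in bounding $\E\|X^{d,\varepsilon,t,x}_s-X^{d,t,x}_s\|^2$ the paper splits each coefficient as $\beta^d_\varepsilon(X^\varepsilon)-\beta^d_\varepsilon(X)+\beta^d_\varepsilon(X)-\beta^d(X)$ (Lipschitz of $\beta^d_\varepsilon$, then approximation at $X$), whereas you propose the reverse order; both give the same estimate since the moment bound (i) holds for both processes.
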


\begin{proof}[Proof of \cref{x01}]Throughout this proof 
let $\langle\cdot,\cdot \rangle \colon \cup_{d\in \R}\R^{d}\times\R^d\to \R $ satisfy
for every $d\in\N$, $x=(x_1,\ldots,x_d),y=
(y_1,\ldots,y_d)
\in \R^d$ that
$\langle x,y \rangle=\sum_{i=1}^{d}x_iy_i$.
First,
the fact that
$\forall\,d\in \N, x,y\in \R^d\colon \lVert x+y\rVert^2\leq 2\lVert x\rVert^2+2\lVert y\rVert^2$,
\eqref{Lip NN coe}, and \eqref{growth NN coe} show for all  $d\in \N$, 
$x\in \R^d$, $\varepsilon\in (0,1)$  that
\begin{align}
\lVert
\beta^d_\varepsilon(x)
\rVert^2\leq 
2\lVert\beta^d_\varepsilon(0)\rVert^2
+2\lVert\beta^d_\varepsilon(x)-\beta^d_\varepsilon(0)
\rVert^2\leq  2\xeqref{growth NN coe}cd^c+2\xeqref{Lip NN coe}c\lVert x\rVert^2 
= 2c(d^c+\lVert x\rVert^2).\label{m01}
\end{align}
Similarly, we have for all  $d\in \N$, 
$x\in \R^d$, $\varepsilon\in (0,1)$  that
\begin{align}\label{m03}
\left\lVert
\sigma^d_\varepsilon(x)
\right\rVert^2_{\mathrm{F}}\leq  2c(d^c+\lVert x\rVert^2).
\end{align}
Next,
the fact that
$\forall\,d\in \N, x,y\in \R^d\colon \lVert x+y\rVert^2
\leq 2\lVert x\rVert^2+2\lVert y\rVert^2$, \eqref{Lip NN coe}, 
and \eqref{growth NN coe} imply for all  $d\in \N$, 
$x\in \R^d$, $\varepsilon\in (0,1)$  that
\begin{align} \begin{split} 
\int_{\R^d\setminus \{0\}}
\left\lVert
\gamma^d_\varepsilon(x,z)\right\rVert^2\nu^d(dz)
&\leq 
\int_{\R^d\setminus \{0\}}
2
\left\lVert
\gamma^d_\varepsilon(0,z)\right\rVert^2
+2\left\lVert
\gamma^d_\varepsilon(x,z)-\gamma^d_\varepsilon(0,z)\right\rVert
^2\nu^d(dz)\\&\leq 2\xeqref{growth NN coe}cd^c+
2
\xeqref{Lip NN coe}c\lVert x^2\rVert=2c(d^c+\lVert x\rVert^2).
\end{split}\label{m04}
\end{align}
Furthermore, It\^o's formula (see, e.g., \cite[Theorem 3.1]{GW2021}) 
and \eqref{SDE X epsilon} show 
for all  $d\in \N$, $t\in [0,T]$, $s\in [t,T]$,
$x\in \R^d$, $\varepsilon\in (0,1)$  that
$\P$-a.s.\ we have that 
\begin{align} \begin{split} 
\left\lVert X^{d,\varepsilon,t,x}_{s}\right\rVert^2
&=\lVert x\rVert^2
+\int_{t}^{s} \left(2\left\langle
X^{d,\varepsilon,t,x}_{r},
\beta^d_\varepsilon(X^{d,\varepsilon,t,x}_{r})
\right\rangle
+\left\lVert\sigma_\varepsilon^d(X^{d,\varepsilon,t,x}_{r})\right\rVert^2_{\mathrm{F}}\right)dr\\&\quad +2
\int_{t}^{s}\sum_{i,j=1}^{d}
\left(X^{d,\varepsilon,t,x}_{r-}\right)_i
\left(\sigma_\varepsilon^d(X^{d,\varepsilon,t,x}_{r-})\right)_{ij}d(W^{d}_j)_r\\
&\quad +2\int_{t}^{s}\int_{\R^d\setminus\{0\}}
\left\langle
X^{d,\varepsilon,t,x}_{r-},
\gamma^d_\varepsilon(X^{d,\varepsilon,t,x}_{r-},z)
\right\rangle
\tilde{N}^d(dz,dr)\\&\quad +
\int_{t}^{s}\int_{\R^d\setminus\{0\}}
\left\lVert
\gamma^d_\varepsilon(X^{d,\varepsilon,t,x}_{r-},z)
\right\rVert^2N^d(dz,dr).
\end{split}\label{m50}
\end{align}
In addition, 
for every
  $d,n\in \N$, 
$x\in \R^d$, $\varepsilon\in (0,1)$ let $
\tau_n^{d,\varepsilon,x}\colon \Omega\to \R$ satisfy that
\begin{align} \begin{split} &
\tau_n^{d,\varepsilon,x}
=\inf \Biggl\{s\in [t,T] 
\colon \int_{t}^{s} \left(2\left\langle
X^{d,\varepsilon,t,x}_{r},
\beta^d_\varepsilon(X^{d,\varepsilon,t,x}_{r})
\right\rangle
+\left\lVert\sigma_\varepsilon^d(X^{d,\varepsilon,t,x}_{r})\right\rVert^2_{\mathrm{F}}\right)dr
\\&\qquad\qquad\qquad
+\int_{t}^{s}\sum_{i,j=1}^{d}
\left\lvert\left(X^{d,\varepsilon,t,x}_{r}\right)_i
\left(\sigma_\varepsilon^d(X^{d,\varepsilon,t,x}_{r})\right)_{ij}
\right\rvert^2
dr\\&\qquad\qquad\qquad +\int_{t}^{s}\int_{\R^d\setminus \{0\}}
\sum_{i=1}^{d}\left\lvert
\left(X^{d,\varepsilon,t,x}_{r}\right)_i
\left(\gamma^d_\varepsilon(X^{d,\varepsilon,t,x}_{r},z)\right)_i
\right\rvert^2\nu^d(dz)\,dr\\&\qquad\qquad\qquad
+\int_{t}^{s}\int_{\R^d\setminus\{0\}}
\left\lVert
\gamma^d_\varepsilon(X^{d,\varepsilon,t,x}_{r},z)
\right\rVert^2_{\mathrm{F}}\nu^d(dz)\,dr
\geq n
\Biggr\}\wedge T\end{split}\label{x00}
\end{align}
(with the convention that $\inf\emptyset=\infty$).
Then \eqref{m50}, the fact that
$\forall\,d\in \N, x,y\in \R^d\colon 2\langle x,y\rangle\leq \lVert x\rVert^2+\lVert y\rVert^2$,
\eqref{m01}, \eqref{m03}, \eqref{m04}, and the fact that $c\geq1$ imply
for all  $d,n\in \N$, $t\in [0,T]$, $s\in [t,T]$,
$x\in \R^d$, $\varepsilon\in (0,1)$  that
\begin{align} \begin{split} 
&\E \!\left[d^c+\left\lVert X^{d,\varepsilon,t,x}_{s\wedge \tau_n^{d,\varepsilon,x}}\right\rVert^2
\right]\\&
=\xeqref{m50}d^c+\lVert x\rVert^2+
\E \!\left[ \int_{t}^{s\wedge \tau_n^{d,\varepsilon,x}} \left(2\left\langle
X^{d,\varepsilon,t,x}_{r},
\beta^d_\varepsilon(X^{d,\varepsilon,t,x}_{r})
\right\rangle
+\left\lVert\sigma_\varepsilon^d(X^{d,\varepsilon,t,x}_{r})\right\rVert^2_{\mathrm{F}}\right)dr\right]\\
&\quad+
\E\!\left[
\int_{t}^{s\wedge \tau_n^{d,\varepsilon,x}}\int_{\R^d\setminus\{0\}}
\left\lVert
\gamma^d_\varepsilon(X^{d,\varepsilon,t,x}_{r},z)
\right\rVert^2\nu^d(dz)\,dr
\right]\\
&\leq d^c+ 
\lVert x\rVert^2+
\E \!\left[ \int_{t}^{s\wedge \tau_n^{d,\varepsilon,x}}
\left\lVert
X^{d,\varepsilon,t,x}_{r}\right\rVert^2dr\right]
+
\E \!\left[ \int_{t}^{s\wedge \tau_n^{d,\varepsilon,x}}
\left\lVert
\beta^d_\varepsilon(X^{d,\varepsilon,t,x}_{r})\right\rVert^2dr\right]\\
&\quad 
+
\E \!\left[ \int_{t}^{s\wedge \tau_n^{d,\varepsilon,x}} \left\lVert\sigma_\varepsilon^d(X^{d,\varepsilon,t,x}_{r})\right\rVert^2_{\mathrm{F}}dr\right]+
\E\!\left[
\int_{t}^{s\wedge \tau_n^{d,\varepsilon,x}}\int_{\R^d\setminus\{0\}}
\left\lVert
\gamma^d_\varepsilon(X^{d,\varepsilon,t,x}_{r},z)
\right\rVert^2\nu^d(dz)\,dr
\right]\\
\end{split}
\end{align} and
\begin{align}
 \begin{split} 
&
\E \!\left[d^c+\left\lVert X^{d,\varepsilon,t,x}_{s\wedge \tau_n^{d,\varepsilon,x}}\right\rVert^2
\right]\\
&\leq d^c+ 
\lVert x\rVert^2+
\E \!\left[ \int_{t}^{s\wedge \tau_n^{d,\varepsilon,x}}\left(
d^c+
\left\lVert
X^{d,\varepsilon,t,x}_{r}\right\rVert^2\right)dr\right]
+
\E \!\left[ \int_{t}^{s\wedge \tau_n^{d,\varepsilon,x}}\xeqref{m01}2c\left(d^c+
\left\lVert
X^{d,\varepsilon,t,x}_{r}\right\rVert^2\right) dr\right]\\
&\quad 
+
\E \!\left[ \int_{t}^{s\wedge \tau_n^{d,\varepsilon,x}}  \xeqref{m03}
2c\left(d^c+
\left\lVert X^{d,\varepsilon,t,x}_{r}\right\rVert^2_{\mathrm{F}}\right)dr\right]+
\E\!\left[
\int_{t}^{s\wedge \tau_n^{d,\varepsilon,x}}
\xeqref{m04}2c
\left(d^c+
\left\lVert X^{d,\varepsilon,t,x}_{r}
\right\rVert^2
\right)
dr
\right]\\
&\leq 
d^c+ 
\lVert x\rVert^2+7c\int_{t}^{s}
\E \!\left[ 
d^c+
\left\lVert
X^{d,\varepsilon,t,x}_{r\wedge \tau_n^{d,\varepsilon,x}}\right\rVert^2dr\right].
\end{split}
\end{align}
Hence, 
 the fact that
$\forall\,d\in \N, x\in \R^d, \varepsilon\in (0,1)\colon \P (\lim_{n\to\infty} \tau_n^{d,\varepsilon,x}=T)=1$
(cf. \eqref{x00}),
Fatou's lemma,
 and Gr\"onwall's inequality demonstrate
for all  $d\in \N$, $t\in [0,T]$, $s\in [t,T]$,
$x\in \R^d$, $\varepsilon\in (0,1)$  that
\begin{align}\label{k22b} \begin{split} 
&\E \!\left[d^c+\left\lVert X^{d,\varepsilon,t,x}_{s}\right\rVert^2
\right]
=
\E \!\left[
\lim_{n\to\infty}\left(
d^c+\left\lVert X^{d,\varepsilon,t,x}_{s\wedge \tau_n^{d,\varepsilon,x}}\right\rVert^2
\right)
\right]\\
&
\leq \liminf_{n\to\infty}
\E \!\left[d^c+\left\lVert X^{d,\varepsilon,t,x}_{s\wedge \tau_n^{d,\varepsilon,x}}\right\rVert^2
\right]\leq (d^c+\lVert x\rVert^2)e^{7c(s-t)}.\end{split}
\end{align}
Using a similar argument as that for \eqref{k22b},  
by \eqref{Lip coe} and \eqref{growth 0} 
we obtain for all  $d\in \N$, $t\in [0,T]$, $s\in [t,T]$,
$x\in \R^d$ that
\begin{align}\label{k22}
\E \!\left[d^c+\left\lVert X^{d,t,x}_{s}\right\rVert^2
\right]\leq (d^c+\lVert x\rVert^2)e^{7c(s-t)}.
\end{align}
This and \eqref{k22b} show \eqref{i01}.

Next, \eqref{approx NN coe},  \eqref{k22},
and the fact that for all
$d\in\N$, $t\in [0,T]$, $s\in [t,T]$,
$x\in \R^d$, $\varepsilon\in(0,1)\colon$ 
$\int_{s}^{t} \1_{ X^{d,t,x}_{r-}\neq X^{d,t,x}_{r} } \,dr=0$
$\P$-a.s.\
show 
for all  $d\in \N$, $t\in [0,T]$, $s\in [t,T]$,
$x\in \R^d$, $\varepsilon\in (0,1)$  that
\begin{align} \begin{split} 
&
\E\! \left[
\int_{t}^{s}\left\lVert
\beta^d_\varepsilon(X^{d,t,x}_{r-})
-\beta^d(X^{d,t,x}_{r-})
\right\rVert^2 dr
\right]=
\E\! \left[
\int_{t}^{s}\left\lVert
\beta^d_\varepsilon(X^{d,t,x}_{r})
-\beta^d(X^{d,t,x}_{r})
\right\rVert^2 dr
\right]
\\&\leq \int_{t}^{s}\xeqref{approx NN coe}\varepsilon c d^c \left(d^c+ 
\E\! \left[
\left\lVert
X^{d,t,x}_{r}
\right\rVert^2\right]
\right)dr\\
&\leq 
\int_{t}^{s}\varepsilon c d^c \xeqref{k22}(d^c+\lVert x\rVert^2)e^{7c(r-t)}\,dr\leq
\varepsilon c d^c (d^c+\lVert x\rVert^2) (s-t) e^{7c T}
\end{split}\label{k24}
\end{align}
and similarly
\begin{align} \begin{split} &
\int_{t}^{s}
\E\!\left[
\left\lVert\sigma^d_\varepsilon(
X^{d,t,x}_{r-}
)-\sigma^d(
X^{d,t,x}_{r-}
)\right\rVert^2_{\mathrm{F}}\right]dr\leq 
\varepsilon c d^c (d^c+\lVert x\rVert^2) (s-t) e^{7c T}
.\end{split}\label{k25}
\end{align}
In addition, \eqref{approx NN coe} and \eqref{k22} 
ensure
for all  $d\in \N$, $t\in [0,T]$, $s\in [t,T]$,
$x\in \R^d$, $\varepsilon\in (0,1)$  that
\begin{align} \begin{split} 
&\int_{t}^{s}
\int_{\R^d\setminus \{0\}}
\E \left[
\left\lVert
\gamma^d_\varepsilon(
X^{d,t,x}_{r-},z)-
\gamma^d(
X^{d,t,x}_{r-},z)
\right\rVert^2\right]
\nu^d(dz)
\,dr\\&\leq \int_{t}^{s}\xeqref{approx NN coe}\varepsilon c d^c \left(d^c+ 
\E \left[
\left\lVert
X^{d,t,x}_{r}
\right\rVert^2\right]
\right)dr\\
&\leq \int_{t}^{s}\varepsilon c d^c \xeqref{k22}(d^c+\lVert x\rVert^2)e^{7c(r-t)}\,dr\leq
\varepsilon c d^c (d^c+\lVert x\rVert^2)(s-t)e^{7cT} .
 \end{split}\label{k26}
\end{align}
 Furthermore, H\"older's inequality,
the fact that
$\forall\,d\in \N, x,y\in \R^d\colon \lVert x+y\rVert^2\leq 2\lVert x\rVert^2+2\lVert y\rVert^2$, \eqref{Lip NN coe}, and \eqref{k24}
imply 
for all  $d\in \N$, $t\in [0,T]$, $s\in [t,T]$,
$x\in \R^d$, $\varepsilon\in (0,1)$  that
\begin{align} \begin{split} &
\E\! \left[\left\lVert
\int_{t}^{s}
(\beta^d_\varepsilon(
X^{d,\varepsilon,t,x}_{r-}
)-\beta^d(
X^{d,t,x}_{r-}
))\,dr
\right\rVert^2\right]\\
&\leq 
\E\! \left[\left(
\int_{t}^{s}\left\lVert
\beta^d_\varepsilon(
X^{d,\varepsilon,t,x}_{r-}
)-\beta^d(
X^{d,t,x}_{r-}
)\right\rVert dr
\right)^2\right]\\
&\leq 
\E\! \left[\left(\int_{t}^{s}dr\right)\left(
\int_{t}^{s}\left\lVert
\beta^d_\varepsilon(
X^{d,\varepsilon,t,x}_{r-}
)-\beta^d(
X^{d,t,x}_{r-}
)\right\rVert^2 dr
\right)\right]\\
&
\leq T
\E\! \left[
\int_{t}^{s}\left\lVert
\beta^d_\varepsilon(
X^{d,\varepsilon,t,x}_{r-}
)-\beta^d(
X^{d,t,x}_{r-}
)\right\rVert^2 dr
\right]
\\
&
\leq 2T
\E\! \left[
\int_{t}^{s}\left\lVert
\beta^d_\varepsilon(
X^{d,\varepsilon,t,x}_{r-}
)-\beta^d_\varepsilon(
X^{d,t,x}_{r-}
)\right\rVert^2 dr
\right]
+
 2T
\E\! \left[
\int_{t}^{s}\left\lVert
\beta^d_\varepsilon(X^{d,t,x}_{r-})
-\beta^d(X^{d,t,x}_{r-})
\right\rVert^2 dr
\right]\\
&\leq 2T\xeqref{Lip NN coe}c
\int_{t}^{s}\E\! \left[
\left\lVert
X^{d,\varepsilon,t,x}_{r}
-
X^{d,t,x}_{r}
\right\rVert^2 \right]dr
+2T\cdot \xeqref{k24}
\varepsilon c d^c (d^c+\lVert x\rVert^2) (s-t) e^{7 c T}\\
&= 2cT
\int_{t}^{s}\E\! \left[
\left\lVert
X^{d,\varepsilon,t,x}_{r}
-
X^{d,t,x}_{r}
\right\rVert^2 \right]dr
+2
\varepsilon c d^c (d^c+\lVert x\rVert^2)T (s-t) e^{7 c T}.
\end{split}\label{m06}
\end{align}
Moreover,
by It\^o's isometry, the fact that $\forall\,d\in \N, x,y\in \R^{d \times d}\colon \lVert x+y\rVert^2_\mathrm{F}\leq 2\lVert x\rVert^2_\mathrm{F}+2\lVert y\rVert^2_\mathrm{F}$, \eqref{Lip NN coe}, and \eqref{k25}
we obtain 
for all  $d\in \N$, $t\in [0,T]$, $s\in [t,T]$,
$x\in \R^d$, $\varepsilon\in (0,1)$  that
\begin{align} \begin{split} 
&
\E\!\left[
\left\lVert
\int_{t}^{s}(
\sigma^d_\varepsilon(
X^{d,\varepsilon,t,x}_{r-}
)-\sigma^d(
X^{d,t,x}_{r-}
))\,dW_r^{d}\right\rVert^2\right]\\
&=
\E\!\left[
\int_{t}^{s}
\left\lVert\sigma^d_\varepsilon(
X^{d,\varepsilon,t,x}_{r-}
)-\sigma^d(
X^{d,t,x}_{r-}
)\right\rVert^2_{\mathrm{F}} dr\right]\\
&\leq 2
\int_{t}^{s}
\E\!\left[
\left\lVert\sigma^d_\varepsilon(
X^{d,\varepsilon,t,x}_{r-}
)-\sigma^d_\varepsilon(
X^{d,t,x}_{r-}
)\right\rVert^2_{\mathrm{F}}\right]dr
+
2
\int_{t}^{s}
\E\!\left[
\left\lVert\sigma^d_\varepsilon(
X^{d,t,x}_{r-}
)-\sigma^d(
X^{d,t,x}_{r-}
)\right\rVert^2_{\mathrm{F}}\right]dr\\
&\leq 2\xeqref{Lip NN coe}c
\int_{t}^{s}
\E\!\left[
\left\lVert
X^{d,\varepsilon,t,x}_{r}
-
X^{d,t,x}_{r}
\right\rVert^2_{\mathrm{F}}\right]dr+2\xeqref{k25}\varepsilon c d^c (d^c+\lVert x\rVert^2) (s-t) e^{7 c T}.
\end{split}\label{m08}
\end{align}
Next,
It\^o's isometry (see, e.g., \cite[Proposition 8.8]{CT2004}), the fact that $\forall\,d\in \N, x,y\in \R^{d}\colon \lVert x+y\rVert^2\leq 2\lVert x\rVert^2+2\lVert y\rVert^2$, \eqref{Lip NN coe}, and \eqref{k26}
show 
for all  $d\in \N$, $t\in [0,T]$, $s\in [t,T]$,
$x\in \R^d$, $\varepsilon\in (0,1)$  that 
\begin{align} \begin{split} 
&
\E \left[
\left\lVert
\int_{t}^{s}
\int_{\R^d\setminus \{0\}}(
\gamma^d_\varepsilon(
X^{d,\varepsilon,t,x}_{r-},z)-\gamma^d(
X^{d,t,x}_{r-},z))\,\tilde{N}^{d}(dz,dr)\right\rVert^2\right]\\
&= \int_{t}^{s}
\int_{\R^d\setminus \{0\}}
\E \left[
\left\lVert
\gamma^d_\varepsilon(
X^{d,\varepsilon,t,x}_{r-},z)-\gamma^d(
X^{d,t,x}_{r-},z))\right\rVert^2\right]\nu^d(dz)\,dr\\
&\leq 2
\int_{t}^{s}
\int_{\R^d\setminus \{0\}}
\E \left[
\left\lVert
\gamma^d_\varepsilon(
X^{d,\varepsilon,t,x}_{r-},z)-
\gamma^d_\varepsilon(
X^{d,t,x}_{r-},z))\right\rVert^2\right]\nu^d(dz)
\,dr
\\&\quad+2\int_{t}^{s}
\int_{\R^d\setminus \{0\}}
\E \left[
\left\lVert
\gamma^d_\varepsilon(
X^{d,t,x}_{r-},z)-
\gamma^d(
X^{d,t,x}_{r-},z)
\right\rVert^2\right]
\nu^d(dz)
\,dr\\
&
\leq
2\xeqref{Lip NN coe}c
\int_{t}^{s}
\E \left[
\left\lVert
X^{d,\varepsilon,t,x}_{r}-
X^{d,t,x}_{r}\right\rVert^2\right]
dr+2 \xeqref{k26}\varepsilon c d^c (d^c+\lVert x\rVert^2)(s-t)e^{7 cT}.
\end{split}\label{m09}
\end{align}
In addition, \eqref{SDE X} and \eqref{SDE X epsilon}
ensure 
for all  $d\in \N$, $t\in [0,T]$, $s\in [t,T]$,
$x\in \R^d$, $\varepsilon\in (0,1)$  that
$\P$-a.s.\ 
\begin{align} \begin{split} 
&
X^{d,\varepsilon,t,x}_{s}-X^{d,t,x}_{s}\\&=\xeqref{SDE X}\xeqref{SDE X epsilon}
\int_{t}^{s}
(\beta^d_\varepsilon(
X^{d,\varepsilon,t,x}_{r-}
)-\beta^d(
X^{d,t,x}_{r-}
))\,dr
+
\int_{t}^{s}(
\sigma^d_\varepsilon(
X^{d,\varepsilon,t,x}_{r-}
)-\sigma^d(
X^{d,t,x}_{r-}
))\,dW_r^{d}\\
&\quad +
\int_{t}^{s}
\int_{\R^d\setminus \{0\}}(
\gamma^d_\varepsilon(
X^{d,\varepsilon,t,x}_{r-},z)-\gamma^d(
X^{d,t,x}_{r-},z))\,\tilde{N}^{d}(dz,dr).
\end{split}\label{k21}
\end{align}
This, the fact that
$\forall\,d\in \N,x,y,z\in \R^d\colon 
\lVert x+y+z \rVert^2\leq 3\lVert x\rVert^2+3\lVert y\rVert^2+3\lVert z\rVert^2
$,
\eqref{m06}, \eqref{m08}, and \eqref{m09}
imply
for all  $d\in \N$, $t\in [0,T]$, $s\in [t,T]$,
$x\in \R^d$, $\varepsilon\in (0,1)$   that 
\begin{align} \begin{split} 
&
\E\!\left[
\left\lVert
X^{d,\varepsilon,t,x}_{s}-X^{d,t,x}_{s}\right\rVert^2
\right]\\&\leq \xeqref{k21}
3\E\!\left[\left\lVert
\int_{t}^{s}
(\beta^d_\varepsilon(
X^{d,\varepsilon,t,x}_{r-}
)-\beta^d(
X^{d,t,x}_{r-}
))dr
\right\rVert^2\right]
+
3
\E\!\left[
\left\lVert
\int_{t}^{s}(
\sigma^d_\varepsilon(
X^{d,\varepsilon,t,x}_{r-}
)-\sigma^d(
X^{d,t,x}_{r-}
))\,dW_r^{d}\right\rVert^2\right]\\
&\quad  +
3
\E\!\left[
\left\lVert
\int_{t}^{s}
\int_{\R^d\setminus \{0\}}(
\gamma^d_\varepsilon(
X^{d,\varepsilon,t,x}_{r-},z)-\gamma^d(
X^{d,t,x}_{r-},z))\,\tilde{N}^{d}(dz,dr)\right\rVert^2\right]\\
&\leq 3\xeqref{m06}\left[
 2cT
\int_{t}^{s}\E\! \left[
\left\lVert
X^{d,\varepsilon,t,x}_{r}
-
X^{d,t,x}_{r}
\right\rVert^2 \right]dr
+2
\varepsilon c d^c (d^c+\lVert x\rVert^2)T (s-t) e^{7 c T}\right]\\
&\quad +3\xeqref{m08}
\left[2c
\int_{t}^{s}
\E\!\left[
\left\lVert
X^{d,\varepsilon,t,x}_{r}
-
X^{d,t,x}_{r}
\right\rVert^2_{\mathrm{F}}\right]dr+2\varepsilon c d^c (d^c+\lVert x\rVert^2) (s-t) e^{7 c T}\right]\\
&\quad +3\xeqref{m09}\left[2c
\int_{t}^{s}
\E\! \left[
\left\lVert
X^{d,\varepsilon,t,x}_{r}-
X^{d,t,x}_{r}\right\rVert^2\right]
dr+ 2\varepsilon c d^c (d^c+\lVert x\rVert^2) (s-t) e^{7 c T}\right]\\
&=(12c+6cT)
\int_{t}^{s}
\E\! \left[
\left\lVert
X^{d,\varepsilon,t,x}_{r}-
X^{d,t,x}_{r}\right\rVert^2\right]
dr
+3\left(2T+4\right)
\varepsilon c d^c (d^c+\lVert x\rVert^2) (s-t)e^{7 cT}.
\end{split}
\end{align}
Thus, Gr\"onwall's inequality, \eqref{k22b}, \eqref{k22}, and the fact that
$
3\left(2T+4\right)\leq 12(T+1)\leq 12 e^{cT}
$
show 
for all  $d\in \N$, $t\in [0,T]$, $s\in [t,T]$,
$x\in \R^d$, $\varepsilon\in (0,1)$   that
\begin{align} \begin{split} 
\E\!\left[
\left\lVert
X^{d,\varepsilon,t,x}_{s}-X^{d,t,x}_{s}\right\rVert^2
\right]&\leq 3\left(2T+4\right)
\varepsilon c d^c (d^c+\lVert x\rVert^2)
(s-t) e^{7cT}e^{(12c+6cT)T}\\&
\leq 12e^{cT}\varepsilon c d^c (d^c+\lVert x\rVert^2) e^{7cT}e^{(12c+6cT)T}(s-t)\\&=
12
\varepsilon  c d^c (d^c+\lVert x\rVert^2) e^{20 cT+6cT^2}(s-t).
\end{split}\label{k01}\end{align}
In addition,
by \eqref{SDE X epsilon}, 
the fact that
$\forall\,d\in \N,x_1,x_2,x_3,x_4\in\R^d\colon 
\lVert
\sum_{i=1}^{4}x_i\rVert^2\leq 4\sum_{i=1}^{4}\lVert
x_i\rVert^2
$,
Jensen's inequality, It\^o's isometry,
and \eqref{Lip NN coe} we have 
for all  $d\in \N$, $t\in [0,T]$, $s\in [t,T]$,
$x\in \R^d$, $\varepsilon\in (0,1)$   that
\begin{align} \begin{split} 
\E \!\left[\left\lVert X^{d,\varepsilon,t,x}_{s}
-
X^{d,\varepsilon,t,y}_{s}\right\rVert^2\right]
&\leq\xeqref{SDE X epsilon} 4\lVert x-y\rVert^2+
4\E\!\left[\left\lVert
\int_{t}^{s}(
\beta^d_\varepsilon(
X^{d,\varepsilon,t,x}_{r-}
)-\beta^d_\varepsilon(X^{d,\varepsilon,t,y}_{r-}))
dr\right\rVert^2\right]\\
&\quad 
+
4\E\!\left[\left\lVert\int_{t}^{s}
(\sigma^d_\varepsilon(
X^{d,\varepsilon,t,x}_{r-}
)-
\sigma^d_\varepsilon(X^{d,\varepsilon,t,y}_{r-}))
dW_r^{d}\right\rVert^2\right]\\
&\quad +
4\E\!\left[\left\lVert
\int_{t}^{s}
\int_{\R^d\setminus \{0\}}(
\gamma^d_\varepsilon(X^{d,\varepsilon,t,x}_{r-},z)
-\gamma^d_\varepsilon(X^{d,\varepsilon,t,y}_{r-},z))
\tilde{N}^{d}(dz,dr)\right\rVert^2\right]\\
\end{split}
\end{align}
and
\begin{align}
 \begin{split} 
&\E \!\left[\left\lVert X^{d,\varepsilon,t,x}_{s}
-
X^{d,\varepsilon,t,y}_{s}\right\rVert^2\right]\\
&\leq 
4\lVert x-y\rVert^2+
4T
\E\!\left[
\int_{t}^{s}\left\lVert
\beta^d_\varepsilon(
X^{d,\varepsilon,t,x}_{r-}
)-\beta^d_\varepsilon(X^{d,\varepsilon,t,y}_{r-})\right\rVert^2
dr\right]\\&\quad 
+
4\E\!\left[\int_{t}^{s}
\left\lVert\sigma^d_\varepsilon(
X^{d,\varepsilon,t,x}_{r-}
)-
\sigma^d_\varepsilon(X^{d,\varepsilon,t,y}_{r-})
\right\rVert^2_{\mathrm{F}}
dr\right]\\
&\quad +
4\E\!\left[
\int_{t}^{s}
\int_{\R^d\setminus \{0\}}
\left\lVert
\gamma^d_\varepsilon(X^{d,\varepsilon,t,x}_{r-},z)
-\gamma^d_\varepsilon(X^{d,\varepsilon,t,y}_{r-},z)
\right\rVert^2\nu^d(dz)\,dr\right]\\
&\leq \xeqref{Lip NN coe}
4\lVert x-y\rVert^2+
4T
\E\!\left[
\int_{t}^{s}c\left\lVert
X^{d,\varepsilon,t,x}_{r-}
-X^{d,\varepsilon,t,y}_{r-}\right\rVert^2
dr\right]
+
4\E\!\left[\int_{t}^{s}c
\left\lVert 
X^{d,\varepsilon,t,x}_{r-}
-
X^{d,\varepsilon,t,y}_{r-}
\right\rVert^2
dr\right]\\
&\quad +
4\E\!\left[\int_{t}^{s}c
\left\lVert
X^{d,\varepsilon,t,x}_{r-}
-
X^{d,\varepsilon,t,y}_{r-}
\right\rVert^2
dr\right]\\
&=
4\lVert x-y\rVert^2+(4 T c + 8c)
\int_{t}^{s}
\E\!\left[
\left\lVert
X^{d,\varepsilon,t,x}_{r}
-
X^{d,\varepsilon,t,y}_{r}
\right\rVert^2
\right]dr.
\end{split}
\end{align}
This, Gr\"onwall's lemma, and \eqref{k22b} 
show 
for all  $d\in \N$, $t\in [0,T]$, $s\in [t,T]$,
$x\in \R^d$, $\varepsilon\in (0,1)$   that
\begin{align} \begin{split} 
\E \!\left[\left\lVert X^{d,\varepsilon,t,x}_{s}
-
X^{d,\varepsilon,t,y}_{s}\right\rVert^2\right]\leq 4\lVert x-y\rVert^2 e^{(4Tc+8c)(s-t)}
\leq 4\lVert x-y\rVert^2 e^{(4Tc+8c)T}
=
 4\lVert x-y\rVert^2e^{8cT + 4cT^2}.\label{k40}
\end{split}
\end{align}
Next, \eqref{Lip g epsilon}, Jensen's inequality, and \eqref{k01}
imply for all
$d\in \N$, $\varepsilon\in(0,1)$,
$t\in[0,T]$, $x\in \R^d$ that 
\begin{align} \begin{split} 
\E \! \left[
\left\lvert
g^d_\varepsilon(X^{d,\varepsilon,t,x}_{T} ) -
g^d_\varepsilon(X^{d,t,x}_{T} ) \right\rvert
\right]
&\leq \xeqref{Lip g epsilon}
\E \! \left[
(cd^c)^\frac{1}{2}T^{-\frac{1}{2}}
\left\lVert
X^{d,\varepsilon,t,x}_{T}  -
X^{d,t,x}_{T}  \right\rVert
\right]\\
&\leq 
(cd^c)^\frac{1}{2}T^{-\frac{1}{2}}
\left(
\E \!\left[
\left\lVert
X^{d,\varepsilon,t,x}_{T}  -
X^{d,t,x}_{T}  \right\rVert^2\right]\right)^\frac{1}{2}
\\
&\leq 
(cd^c)^\frac{1}{2}T^{-\frac{1}{2}}
\left(\xeqref{k01}12 \varepsilon  d^c (d^c+\lVert x\rVert^2) e^{20 cT+6cT^2}T\right)^{\frac{1}{2}}
\\&\leq 4 (\varepsilon c d^{2c})^\frac{1}{2}
(d^c+\lVert x\rVert^2)^\frac{1}{2}e^{10cT+3cT^2}.\end{split}\label{k06}
\end{align}
 Moreover,
\eqref{approx NN coe}, Jensen's inequality, and  \eqref{k22}
ensure for all
$d\in \N$, $\varepsilon\in(0,1)$,
$t\in[0,T]$, $x\in \R^d$ that 
\begin{align} \begin{split} 
\E\!\left[
\left\lvert
g^d_\varepsilon(X^{d,t,x}_{T} ) 
-g^d(X^{d,t,x}_{T} )\right\rvert
\right]\leq \xeqref{approx NN coe}
\E \!\left[
\left\lvert
 \varepsilon c d^c \left(d^c+\left\lVert X^{d,t,x}_{T}\right\rVert^2\right)\right\rvert^\frac{1}{2}
\right]
&\leq (\varepsilon cd^c)^\frac{1}{2}
\left(
\E \!\left[
 d^c+\left\lVert X^{d,t,x}_{T}\right\rVert^2
\right]\right)^\frac{1}{2}\\
&\leq 
(\varepsilon cd^c)^\frac{1}{2}\left(\xeqref{k22}(d^c+\lVert x\rVert^2)e^{7cT}\right)^{\frac{1}{2}}\\&
=
(\varepsilon cd^c)^\frac{1}{2}(d^c+\lVert x\rVert^2)^{\frac{1}{2}}e^{3.5cT}.
\end{split}\label{k30}
\end{align}
This, the triangle inequality, and \eqref{k06}
show for all
$d\in \N$, $\varepsilon\in(0,1)$,
$t\in[0,T]$, $x\in \R^d$ that 
\begin{align} \begin{split} 
\E \!\left[\left\lvert g^d_\varepsilon(X^{d,\varepsilon,t,x}_{T} )  -g^d(X^{d,t,x}_{T} )\right\rvert\right]& \leq 
\E \!\left[\left\lvert
g^d_\varepsilon(X^{d,\varepsilon,t,x}_{T} ) -
g^d_\varepsilon(X^{d,t,x}_{T} ) 
\right\rvert\right]+
\E \!\left[\left\lvert
g^d_\varepsilon(X^{d,t,x}_{T} ) 
-g^d(X^{d,t,x}_{T} )
\right\rvert\right]\\
&\leq \xeqref{k06}4(\varepsilon c d^{2c})^\frac{1}{2}
(d^c+\lVert x\rVert^2)^\frac{1}{2}e^{10 cT+3cT^2}
+\xeqref{k30}
(\varepsilon cd^c)^\frac{1}{2}(d^c+\lVert x\rVert^2)^{\frac{1}{2}}e^{3.5cT}\\
&
\leq 5(\varepsilon c d^{2c})^\frac{1}{2}
(d^c+\lVert x\rVert^2)^\frac{1}{2}e^{10 cT+3cT^2}.
\end{split}\label{k88}
\end{align}
Thus, Jensen's inequality and \eqref{k22}
imply for all
$d\in \N$, $\varepsilon\in(0,1)$,
$s\in [0,T]$,
$t\in[s,T]$, $x\in \R^d$ that 
\begin{align} \begin{split} 
\E\!\left[
\E \!\left[\left\lvert g^d_\varepsilon(X^{d,\varepsilon,t,\tilde{x}}_{T} )  -g^d(X^{d,t,\tilde{x}}_{T} )\right\rvert\right]\Bigr|_{\tilde{x}=X^{d,s,x}_{t}}
\right]
&\leq 
5(\varepsilon c d^{2c})^\frac{1}{2}
\E\left[\left(d^c+\left\lVert X^{d,s,x}_{t}\right\rVert^2\right)^\frac{1}{2}
\right]
e^{10cT+3cT^2}\\
&\leq 
 5(\varepsilon c d^{2c})^\frac{1}{2}
\left(\E\!\left[d^c+\left\lVert X^{d,s,x}_{t}\right\rVert^2
\right]\right)^\frac{1}{2}
e^{10cT+3cT^2}
\\
&\leq 
 5(\varepsilon c d^{2c})^\frac{1}{2}
\left(\xeqref{k22}(d^c+\lVert x\rVert^2)e^{7 cT}\right)^\frac{1}{2}
e^{10cT+3cT^2}\\
&\leq 5(\varepsilon c d^{2c})^\frac{1}{2}
(d^c+\lVert x\rVert^2)^{\frac{1}{2}}
e^{14cT+3cT^2}.
\end{split}\label{k89}\end{align}
Next, 
by the triangle inequality, 
\eqref{Lip g epsilon}, \eqref{growth NN coe}, 
\eqref{k67}, and \eqref{growth f epsilon}
we obtain for all
$d\in \N$, $\varepsilon\in(0,1)$,
 $x\in \R^d$, $w\in\R$ that 
\begin{align} \begin{split} 
\left\lvert
g_\varepsilon^d(x)\right\rvert\leq 
\left\lvert
g_\varepsilon^d(0)\right\rvert
+
\left\lvert g_\varepsilon^d(x)-
g_\varepsilon^d(0)\right\rvert
&\leq \xeqref{growth NN coe}(cd^c T^{-1})^{\frac{1}{2}}
+\xeqref{Lip g epsilon}(cd^c T^{-1})^{\frac{1}{2}}\lVert x\rVert
\\&\leq 
2
(cd^c T^{-1})^{\frac{1}{2}} (d^c+\lVert x\rVert^2)^\frac{1}{2}\label{k90}
\end{split}
\end{align}
and 
\begin{align}
\lvert
f_\varepsilon(w)\rvert\leq
\lvert
f_\varepsilon(0)\rvert
+
\lvert
f_\varepsilon(w)-f_\varepsilon(0)\rvert
\leq \xeqref{growth f epsilon}(cd^c T^{-3})^{\frac{1}{2}}+
\xeqref{k67}c^\frac{1}{2}\lvert w\rvert.\label{m91}
\end{align}
This and \eqref{approx NN coe} show
 for all
$d\in \N$, 
$x\in \R^d$ that 
\begin{align}
\lvert g^d(x)\rvert=\lim_{\varepsilon\to 0}
\left\lvert g^d_\varepsilon(x)\right\rvert\leq 
2
(cd^c T^{-1})^{\frac{1}{2}} (d^c+\lVert x\rVert^2)^\frac{1}{2}.
\end{align}
Hence, \eqref{k22}
implies for all
$d\in \N$, $t\in [0,T]$,
 $x\in \R^d$ that 
\begin{align} \begin{split} 
\left\lVert g^d(X^{d,t,x}_{T})  \right\rVert_{L^2(\P)}&\leq \xeqref{k90}
2
(cd^c T^{-1})^{\frac{1}{2}} \left\lVert\left(d^c+\left\lVert X^{d,t,x}_{T}\right\rVert^2\right)^\frac{1}{2}\right\rVert_{L^2(\P)}\\
&\leq 
2
(cd^c T^{-1})^{\frac{1}{2}}
\left(
 \E \!\left[d^c+\left\lVert X^{d,t,x}_{T}\right\rVert^2\right]\right)^\frac{1}{2}\\
&\leq 2
(cd^c T^{-1})^{\frac{1}{2}}
\left(\xeqref{k22} (d^c+\lVert x\rVert^2)e^{7 cT} \right)^{\frac{1}{2}}\\
&
=2
(cd^c T^{-1})^{\frac{1}{2}}
\left( d^c+\lVert x\rVert^2 \right)^{\frac{1}{2}}e^{3.5 cT}.
\end{split}\label{m92}
\end{align}
This,
\eqref{FK u epsilon}, the triangle inequality, the disintegration theorem, the flow property, and \eqref{m91} 
demonstrate for all
$d\in \N$, 
$s\in[0,T]$, 
$t\in [s,T]$,
$x\in \R^d$ that 
\begin{align} \begin{split} 
&
\left\lVert
u^{d}(t,X_{t}^{d,s,x})\right\rVert_{L^2(\P)}\\
&
=\left\lVert
u^{d}(t,\tilde{x})
\big|_{\tilde{x}=X_{t}^{d,s,x}}
\right\rVert_{L^2(\P)}\\
&\leq\xeqref{FK u epsilon}
\left\lVert
\E\!\left[\left\lvert g^d(X^{d,t,\tilde{x}}_{T})  \right\rvert\right]
\bigr|_{\tilde{x}=X_{t}^{d,s,x}}
\right\rVert_{L^2(\P)} 
+ 
\int_{t}^{T}\left\lVert\E\!\left [\left\lvert
f(u^{d}(X^{d,t,\tilde{x}}_{r} ) ) 
 \right\rvert
\right] 
\bigr|_{\tilde{x}=X_{t}^{d,s,x}}
\right\rVert_{L^2(\P)}  dr\\
&\leq
\left\lVert
\left\lVert g^d(X^{d,t,\tilde{x}}_{T})  \right\rVert_{L^2(\P)}
\bigr|_{\tilde{x}=X_{t}^{d,s,x}}
\right\rVert_{L^2(\P)} 
+ 
\int_{t}^{T}\left\lVert\left\lVert
f(u^{d}(X^{d,t,\tilde{x}}_{r} ) ) 
 \right\rVert_{L^2(\P)}
\bigr|_{\tilde{x}=X_{t}^{d,s,x}}
\right\rVert_{L^2(\P)}  dr\\
&=
\left\lVert g^d(X^{d,s,x}_{T})  \right\rVert_{L^2(\P)} +
\int_{t}^{T}\left\lVert
f(u^{d}(X^{d,s,x}_{r} ) ) 
 \right\rVert_{L^2(\P)}  dr\\
&\leq\xeqref{m92} 2
(cd^c T^{-1})^{\frac{1}{2}}
\left( d^c+\lVert x\rVert^2 \right)^{\frac{1}{2}}e^{3.5 cT}+\int_{t}^{T}\xeqref{m91}
\left(
(cd^c T^{-3})^{\frac{1}{2}}+
c^\frac{1}{2}
\left
\lVert u^{d}(X^{d,s,x}_{r} )\right\rVert_{L^2(\P)}\right)dr\\
&\leq 3
(cd^c T^{-1})^{\frac{1}{2}}
\left( d^c+\lVert x\rVert^2 \right)^{\frac{1}{2}}e^{3.5 cT}+\int_{t}^{T}
c^\frac{1}{2}
\left
\lVert u^{d}(X^{d,s,x}_{r} )\right\rVert_{L^2(\P)}dr.
\end{split}\label{x65}\end{align}
Therefore, Gr\"onwall's lemma,
\eqref{growth u},
\eqref{k22}, and the fact that
$c\in [1,\infty)$
ensure for all
$d\in \N$,
$s\in[0,T]$, 
$t\in [s,T]$,
$x\in \R^d$ that 
\begin{align}
 \begin{split} 
\left\lVert
u^{d}(t,X_{t}^{d,s,x})\right\rVert_{L^2(\P)}&\leq  3
(cd^c T^{-1})^{\frac{1}{2}}
\left( d^c+\lVert x\rVert^2 \right)^{\frac{1}{2}}e^{3.5 cT}e^{c^{1/2}T}
\leq 
3
(cd^c T^{-1})^{\frac{1}{2}}
\left( d^c+\lVert x\rVert^2 \right)^{\frac{1}{2}}e^{4.5 cT}.
\end{split}\label{m93}
\end{align}
Hence, for all
$d\in \N$, 
$s\in[0,T]$, 
$t\in [s,T]$,
$x\in \R^d$ we have that 
\begin{align}
\lvert
u^{d}(t,x)\rvert\leq 3
(cd^c T^{-1})^{\frac{1}{2}}
\left( d^c+\lVert x\rVert^2 \right)^{\frac{1}{2}}e^{4.5 cT}.
\label{m93b}
\end{align}
Next,  \eqref{FK u epsilon} and the triangle inequality 
show for all
$d\in \N$, $\varepsilon\in(0,1)$,
$t\in[0,T]$, $x,y\in \R^d$ that 
\begin{align} \begin{split} 
\left\lvert u^{d,\varepsilon}(t,x)-
u^{d,\varepsilon}(t,y)\right\rvert
&\leq \E\!\left[\left\lvert g^d_\varepsilon(X^{d,\varepsilon,t,x}_{T})  
-g^d_\varepsilon(X^{d,\varepsilon,t,y}_{T} )\right\rvert
\right]\\&\quad  + 
\int_{t}^{T}\E\!\left [\left\lvert
f_\varepsilon(u^{d,\varepsilon}(X^{d,\varepsilon,t,x}_{r} ) ) 
-f_\varepsilon(u^{d,\varepsilon}(X^{d,\varepsilon,t,y}_{r} ) ) \right\rvert
\right] dr.\end{split}
\end{align}
This, the triangle inequality, the disintegration theorem, 
\eqref{Lip g epsilon}, \eqref{k67}, Jensen's inequality,
the fact that $c\in [1,\infty)$, and \eqref{k40}
imply for all
$d\in \N$, $\varepsilon\in(0,1)$,
$s\in[0,T]$, 
$t\in [s,T]$,
$x,y\in \R^d$ that 
\begin{align} \begin{split} 
&
\E\! \left[\left\lvert
u^{d,\varepsilon}(t,X^{d,\varepsilon,s,x}_{t})-
u^{d,\varepsilon}(t,X^{d,\varepsilon,s,y}_{t})
\right\rvert\right]\\&=
\E\! \left[\left\lvert
u^{d,\varepsilon}(t,\tilde{x})-
u^{d,\varepsilon}(t,\tilde{y})
\right\rvert
\Bigr|_{(\tilde{x},\tilde{y})=(X^{d,\varepsilon,s,x}_{t}, X^{d,\varepsilon,s,y}_{t})}
\right]\\
&\leq \E\!\left[
\E\!\left[\left\lvert g^d_\varepsilon(X^{d,\varepsilon,t,\tilde{x}}_{T})  
-g^d_\varepsilon(X^{d,\varepsilon,t,\tilde{y}}_{T} )\right\rvert
\right]\Bigr|_{(\tilde{x},\tilde{y})=(X^{d,\varepsilon,s,x}_{t}, X^{d,\varepsilon,s,y}_{t})}
\right]\\
&\quad
+
\int_{t}^{T}
\E\!\left[
\E\!\left [\left\lvert
f_\varepsilon(u^{d,\varepsilon}(X^{d,\varepsilon,t,\tilde{x}}_{r} ) ) 
-f_\varepsilon(u^{d,\varepsilon}(X^{d,\varepsilon,t,\tilde{y}}_{r} ) ) \right\rvert
\right] \Bigr|_{(\tilde{x},\tilde{y})=(X^{d,\varepsilon,s,x}_{t}, X^{d,\varepsilon,s,y}_{t})}
\right]
dr\\
&=
 \E\!\left[\left\lvert g^d_\varepsilon(X^{d,\varepsilon,s,x}_{T})  
-g^d_\varepsilon(X^{d,\varepsilon,s,y}_{T} )\right\rvert
\right] + 
\int_{t}^{T}\E\!\left [\left\lvert
f_\varepsilon(u^{d,\varepsilon}(X^{d,\varepsilon,s,x}_{r} ) ) 
-f_\varepsilon(u^{d,\varepsilon}(X^{d,\varepsilon,s,y}_{r} ) ) \right\rvert
\right] dr\\
&\leq 
\E\!\left[\xeqref{Lip g epsilon}(cd^cT^{-1})^\frac{1}{2}\left\lVert X^{d,\varepsilon,s,x}_{T}  
-X^{d,\varepsilon,s,y}_{T} \right\rVert
\right]+
\int_{t}^{T}\xeqref{k67}c^{\frac{1}{2}}\E\!\left [\left\lvert
u^{d,\varepsilon}(X^{d,\varepsilon,s,x}_{r} )  
-u^{d,\varepsilon}(X^{d,\varepsilon,s,y}_{r} )  \right\rvert
\right] dr\\
&\leq (cd^cT^{-1})^\frac{1}{2}
\left(
\E\!\left[\left\lVert X^{d,\varepsilon,s,x}_{T}  
-X^{d,\varepsilon,s,y}_{T} \right\rVert^2
\right]\right)^\frac{1}{2}+
\int_{t}^{T}c^{\frac{1}{2}}\E\!\left [\left\lvert
u^{d,\varepsilon}(X^{d,\varepsilon,s,x}_{r} )  
-u^{d,\varepsilon}(X^{d,\varepsilon,s,y}_{r} )  \right\rvert
\right]  dr\\
&\leq (cd^cT^{-1})^\frac{1}{2}
\left(\xeqref{k40}4\lVert x-y\rVert^2e^{8cT + 4cT^2}\right)^\frac{1}{2}+
\int_{t}^{T}c\E\!\left [\left\lvert
u^{d,\varepsilon}(X^{d,\varepsilon,s,x}_{r} )  
-u^{d,\varepsilon}(X^{d,\varepsilon,s,y}_{r} )  \right\rvert
\right]  dr\\
&\leq 2(cd^cT^{-1})^\frac{1}{2}
\lVert x-y\rVert e^{4cT + 2cT^2}+
\int_{t}^{T}c\E\!\left [\left\lvert
u^{d,\varepsilon}(X^{d,\varepsilon,s,x}_{r} )  
-u^{d,\varepsilon}(X^{d,\varepsilon,s,y}_{r} )  \right\rvert
\right]  dr.
\end{split}\label{x68}
\end{align}
Hence, Gr\"onwall's inequality, \eqref{growth u}, and \eqref{k22b}
show for all
$d\in \N$, $\varepsilon\in(0,1)$,
$s\in[0,T]$, 
$t\in [s,T]$,
$x,y\in \R^d$ that 
\begin{align} \begin{split} 
\E\! \left[\left\lvert
u^{d,\varepsilon}(t,X^{d,\varepsilon,s,x}_{t})-
u^{d,\varepsilon}(t,X^{d,\varepsilon,s,y}_{t})
\right\rvert\right]&\leq 
\xeqref{x68}2(cd^cT^{-1})^\frac{1}{2}
\lVert x-y\rVert e^{4cT + 2cT^2}\cdot e^{cT}\\
&=2(cd^cT^{-1})^\frac{1}{2}
\lVert x-y\rVert e^{5cT + 2cT^2}\end{split}
\end{align}
and hence
\begin{align}
\left\lvert
u^{d,\varepsilon}(t,x)-
u^{d,\varepsilon}(t,y)
\right\rvert\leq 
2(cd^cT^{-1})^\frac{1}{2}
\lVert x-y\rVert e^{5cT + 2cT^2}.\label{k97}
\end{align}
This proves \eqref{i03}.

Next, \eqref{k67}, \eqref{k97}, Jensen's inequality, and \eqref{k01}  ensure for all
$d\in \N$, $\varepsilon\in(0,1)$,
$t\in[0,T]$, 
$s\in [t,T]$,
$x\in \R^d$ that
\begin{align} \begin{split} 
&
\E\!\left[\left\lvert
f_\varepsilon(u^{d,\varepsilon}(s,X^{d,\varepsilon,t,x}_{s} ) )
-
f_\varepsilon(u^{d,\varepsilon}(s,X^{d,t,x}_{s} ) )\right\rvert
\right]\\&\leq \xeqref{k67}
c^{\frac{1}{2}}
\E\!\left[\left\lvert
u^{d,\varepsilon}(s,X^{d,\varepsilon,t,x}_{s} ) 
-u^{d,\varepsilon}(s,X^{d,t,x}_{s}  )\right\rvert
\right]\\
&
\leq 
c^\frac{1}{2}\xeqref{k97}2(cd^cT^{-1})^\frac{1}{2}
\E\!\left[
\left\lVert X^{d,\varepsilon,t,x}_{s} -
X^{d,t,x}_{s}
\right\rVert\right] e^{5cT + 2cT^2}
\\&\leq 
2c^\frac{1}{2}(cd^cT^{-1})^\frac{1}{2}
\left(
\E\!\left[
\left\lVert X^{d,\varepsilon,t,x}_{s} -
X^{d,t,x}_{s}
\right\rVert^2\right]\right)^\frac{1}{2} e^{5cT + 2cT^2}\\
&\leq 
2c^\frac{1}{2}(cd^cT^{-1})^\frac{1}{2}
\left(\xeqref{k01}
12\varepsilon c d^c (d^c+\lVert x\rVert^2) e^{20 cT+6cT^2}T\right)^\frac{1}{2} e^{5cT + 2cT^2}
\\&
\leq 8c^\frac{3}{2}d^c\varepsilon^\frac{1}{2}
( d^c+\lVert x\rVert^2)^\frac{1}{2}e^{15cT+5cT^2}.
\end{split}\label{m98}\end{align}
This, the triangle inequality, \eqref{k67},
\eqref{k67}, the fact that $c\in [1,\infty)$, the fact that $cT\leq e^{cT}$, and \eqref{m93}
imply for all
$d\in \N$, $\varepsilon\in(0,1)$,
$t\in[0,T]$, 
$x\in \R^d$ that
\begin{align}
 \begin{split}&\int_{t}^{T}
\E\!\left[\left\lvert
f_\varepsilon(u^{d,\varepsilon}(s,X^{d,\varepsilon,t,x}_{s} ) )-f(u^d(s,X^{d,t,x}_{s} ) ) 
\right\rvert
\right]ds\\&
\leq 
T\sup_{s\in[t,T]}\E\!\left[\left\lvert
f_\varepsilon(u^{d,\varepsilon}(s,X^{d,\varepsilon,t,x}_{s} ) )
-
f_\varepsilon(u^{d,\varepsilon}(s,X^{d,t,x}_{s} ) )\right\rvert
\right]\\
&\quad +\int_{t}^{T}
\E\!\left[\left\lvert
f_\varepsilon(u^{d,\varepsilon}(s,X^{d,t,x}_{s} ) )
-
f_\varepsilon(u^{d}(s,X^{d,t,x}_{s} ) )
\right\rvert\right]ds\\&\quad 
+
T\sup_{s\in[t,T]}\E\!\left[\left\lvert
f_\varepsilon(u^{d}(s,X^{d,t,x}_{s} ) )
-
f(u^{d}(s,X^{d,t,x}_{s} ) )\right\rvert\right]\\
&\leq T\cdot \xeqref{m98} 8c^\frac{3}{2}d^c\varepsilon^\frac{1}{2}
( d^c+\lVert x\rVert^2)^\frac{1}{2}e^{15 cT + 5cT^2}
+\int_{t}^{T}\xeqref{k67}c^{\frac{1}{2}}
\E\!\left[\left\lvert
u^{d,\varepsilon}(s,X^{d,t,x}_{s} ) 
-
u^{d}(s,X^{d,t,x}_{s} ) 
\right\rvert\right]ds\\
&\quad
+T\sup_{s\in[t,T]}\xeqref{k67}\left[\varepsilon^\frac{1}{2}\left(1+
\E\!\left[\left\lvert u^{d}(s,X^{d,t,x}_{s} ) \right\rvert^2\right]\right)\right] \end{split}
\end{align}
and
\begin{align} \begin{split} 
&\int_{t}^{T}\E\!\left[\left\lvert
f_\varepsilon(u^{d,\varepsilon}(s,X^{d,\varepsilon,t,x}_{s} ) )-f(u^d(s,X^{d,t,x}_{s} ) ) 
\right\rvert
\right]ds\\
&
\leq  8 cd^c\varepsilon^\frac{1}{2}
( d^c+\lVert x\rVert^2)^\frac{1}{2}e^{16cT+5cT^2}
+\int_{t}^{T}c^{\frac{1}{2}}
\E\!\left[\left\lvert
u^{d,\varepsilon}(s,X^{d,t,x}_{s} ) 
-
u^{d}(s,X^{d,t,x}_{s} ) 
\right\rvert\right]ds\\&\quad+
T\varepsilon^\frac{1}{2}
\left[1+
\left(\xeqref{m93}
3
(cd^c T^{-1})^{\frac{1}{2}}
\left( d^c+\lVert x\rVert^2 \right)^{\frac{1}{2}}e^{4.5 cT}\right)^2\right]\\
&= 8cd^c\varepsilon^\frac{1}{2}
( d^c+\lVert x\rVert^2)^\frac{1}{2}e^{16 cT + 5cT^2}
+\int_{t}^{T}c^{\frac{1}{2}}
\E\!\left[\left\lvert
u^{d,\varepsilon}(s,X^{d,t,x}_{s} ) 
-
u^{d}(s,X^{d,t,x}_{s} ) 
\right\rvert\right]ds\\&\quad  +
10cd^c
\varepsilon^\frac{1}{2}e^{9 cT} (d^c+\lVert x\rVert^2)\\
&\leq 18 cd^c\varepsilon^\frac{1}{2} (d^c+\lVert x\rVert^2)e^{16 cT+5cT^2}
+\int_{t}^{T}c
\E\!\left[\left\lvert
u^{d,\varepsilon}(s,X^{d,t,x}_{s} ) 
-
u^{d}(s,X^{d,t,x}_{s} ) 
\right\rvert\right]ds.
\end{split}\label{k99}
\end{align}
Hence, \eqref{k22},  the disintegration theorem, and the flow property  show for all
$d\in \N$, $\varepsilon\in(0,1)$,
$s\in[0,T]$, 
$t\in [s,T]$,
$x\in \R^d$ that
\begin{align}
 \begin{split} 
&
\int_{t}^{T}\E \!\left[
\E\!\left[\left\lvert
f_\varepsilon(u^{d,\varepsilon}(r,X^{d,\varepsilon,t,\tilde{x}}_{r} ) )-f(u^d(r,X^{d,t,\tilde{x}}_{r} ) ) 
\right\rvert
\right]\Bigr|_{\tilde{x}=X_{t}^{d,s,x}}\right]dr\\
&\leq 
 18 cd^c
\varepsilon^\frac{1}{2} \E\!\left[d^c+\left\lVert X_{t}^{d,s,x}\right\rVert^2\right]e^{16 cT + 5 cT^2}
+\int_{t}^{T}c
\E\!\left[
\E\!\left[\left\lvert
u^{d,\varepsilon}(r,X^{d,t,\tilde{x}}_{r} ) 
-
u^{d}(r,X^{d,t,\tilde{x}}_{r} ) 
\right\rvert
\right]\Bigr|_{\tilde{x}=X_{t}^{d,s,x}}
\right]dr\\
&\leq
18 cd^c
\varepsilon^\frac{1}{2} \left(\xeqref{k22}(d^c+\lVert x\rVert^2)e^{7cT}\right)
e^{16 cT+5cT^2}
+c\int_{t}^{T}
\E\!\left[\left\lvert
u^{d,\varepsilon}(r,X^{d,s,x}_{r} ) 
-
u^{d}(r,X^{d,s,x}_{r} ) 
\right\rvert
\right]dr
.
\end{split}\label{n02}\end{align}
This, the triangle inequality, \eqref{FK u}, \eqref{FK u epsilon}, and \eqref{k89} 
ensure for all
$d\in \N$, $\varepsilon\in(0,1)$,
$s\in[0,T]$, 
$t\in [s,T]$,
$x\in \R^d$ that
\begin{align} \begin{split} 
&
\E\!\left[\left\lvert
u^{d,\varepsilon}(t,X^{d,s,x}_{t} )-u^d(t,X^{d,s,x}_{t} )\right\rvert
\right]=
\E\!\left[\left\lvert
u^{d,\varepsilon}(t,\tilde{x} )-u^d(t,\tilde{x} )\right\rvert\Bigr|_{\tilde{x}=X^{d,s,x}_{t}}
\right]\\
&\leq\xeqref{FK u}\xeqref{FK u epsilon} \E\!\left[
\E \!\left[\left\lvert g^d_\varepsilon(X^{d,\varepsilon,t,\tilde{x}}_{T} )  -g^d(X^{d,t,\tilde{x}}_{T} )\right\rvert\right]\Bigr|_{\tilde{x}=X^{d,s,x}_{t}}
\right]\\&\quad 
+\int_{t}^{T}\E \!\left[
\E\!\left[\left\lvert
f_\varepsilon(u^{d,\varepsilon}(r,X^{d,\varepsilon,t,\tilde{x}}_{r} ) )-f(u^d(r,X^{d,t,\tilde{x}}_{r} ) ) 
\right\rvert
\right]\Bigr|_{\tilde{x}=X_{t}^{d,s,x}}\right]dr\\
&\leq \xeqref{k89}
5(\varepsilon c d^{2c})^\frac{1}{2}
(d^c+\lVert x\rVert^2)^{\frac{1}{2}}
e^{14 cT+3cT^2}\\
&\quad +
\xeqref{n02}
18 cd^c
\varepsilon^\frac{1}{2} \left(d^c+\lVert x\rVert^2)e^{7cT}\right)
e^{16 cT+5cT^2}
+c\int_{t}^{T}
\E\!\left[\left\lvert
u^{d,\varepsilon}(r,X^{d,s,x}_{r} ) 
-
u^{d}(r,X^{d,s,x}_{r} ) 
\right\rvert
\right]dr\\
&\leq 23cd^c\varepsilon^{\frac{1}{2}}
(d^c+\lVert x\rVert^2)
e^{23 cT+5cT^2}+c\int_{t}^{T}
\E\!\left[\left\lvert
u^{d,\varepsilon}(r,X^{d,s,x}_{r} ) 
-
u^{d}(r,X^{d,s,x}_{r} ) 
\right\rvert
\right]dr.
\end{split}\label{x75}
\end{align}
Hence, Gr\"onwall's lemma, 
\eqref{growth u}, and
\eqref{k22} imply for all
$d\in \N$, $\varepsilon\in(0,1)$,
$s\in[0,T]$, 
$t\in [s,T]$,
$x\in \R^d$ that
\begin{align} \begin{split} 
&
\E\!\left[\left\lvert
u^{d,\varepsilon}(t,X^{d,s,x}_{t} )-u^d(t,X^{d,s,x}_{t} )\right\rvert
\right]\\
&\leq \xeqref{x75}23cd^c\varepsilon^{\frac{1}{2}}
(d^c+\lVert x\rVert^2)
e^{23 cT+5cT^2}\cdot e^{cT}=23cd^c\varepsilon^{\frac{1}{2}}
(d^c+\lVert x\rVert^2)
e^{24cT+5cT^2}\end{split}
\end{align}
and hence
$
\left\lvert
u^{d,\varepsilon}(t,x)-
u^{d}(t,x)\right\rvert\leq 23cd^c\varepsilon^{\frac{1}{2}}
(d^c+\lVert x\rVert^2)
e^{24cT+5cT^2}$. This shows \eqref{i04}. 
The proof of \cref{x01} is thus completed.
\end{proof}

\section{Euler-Maruyama and MLP approximations revisited}\label{s03}
In \cref{k32} below we approximate the solution to
the SFPE \eqref{FK u epsilon}, associated to 
\eqref{SDE X epsilon}, through the solution to the SFPE \eqref{FK u K epsilon}, associated to the Euler-Maruyama approximation \eqref{SDE X epsilon K}.


\begin{proposition}[Discretization error] \label{k32}
Assume Settings \ref{setting PIDE}, \ref{setting approx NN}, 
and \ref{theta setting}.
For every $\varepsilon\in(0,1)$ let $f_\varepsilon\in C(\R,\R)$ be a function
which satisfies \eqref{k67} and \eqref{growth f epsilon}.
For every $ d,K\in\N $, $\varepsilon\in (0,1)$ 
let $u^{d,K,\varepsilon},u^{d,\varepsilon}\colon [0,T]\times \R^d\to\R$ 
be measurable functions introduced in 
\eqref{FK u K epsilon} and \eqref{FK u epsilon}, respectively.
Moreover, for convenience we use the notations
$X^{d,K,\varepsilon,t,x}:=X^{d,0,K,\varepsilon,t,x}$,
$X^{d,\varepsilon,t,x}:=X^{d,0,\varepsilon,t,x}$,
$W^d:=W^{d,0}$, and $\tilde{N}^d:=\tilde{N}^{d,0}$
for every $ d,K\in\N $,
$t\in [0,T]$,
$x\in\R^d$, $\varepsilon\in (0,1)$.
Then the following items hold.
\begin{enumerate}[(i)]
\item \label{k69b}For all  $d,K\in \N$, $t\in [0,T]$, $s\in [t,T]$,
$x\in \R^d$, $\varepsilon\in (0,1)$  we have that
\begin{align}
\E \!\left[d^c+\left\lVert X^{d,K,\varepsilon,t,x}_{s}\right\rVert^2
\right]\leq (d^c+\lVert x\rVert^2)e^{7c(s-t)}.
\end{align}
\item \label{k94}
For all
 $d,K\in \N$, $t\in [0,T]$, $x\in\R^d$,
 $\varepsilon\in (0,1)$, 
we have that
\begin{align}
\left\lvert u^{d,K,\varepsilon}(t,x)-
u^{d,\varepsilon}(t,x)\right\rvert\leq 
12c^{\frac{3}{2}}d^{\frac{c}{2}}(T+2)e^{21cT+5cT^2}
(d^c+\lVert x\rVert^2)^\frac{1}{2}
\frac{T^\frac{1}{2}}{K^\frac{1}{2}}.
\end{align}
\end{enumerate}
\end{proposition}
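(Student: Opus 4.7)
The plan is to mirror the structure of the proof of \cref{x01}: first establish a priori moment bounds for the Euler--Maruyama trajectory $X^{d,K,\varepsilon,t,x}$, then derive an $L^2$ strong-error estimate between $X^{d,\varepsilon,t,x}$ and $X^{d,K,\varepsilon,t,x}$ of order $T/K$, and finally lift this to a bound on $u^{d,K,\varepsilon}-u^{d,\varepsilon}$ via Gr\"onwall applied to the SFPEs \eqref{FK u K epsilon} and \eqref{FK u epsilon}. For item \eqref{k69b}, I would repeat almost verbatim the localisation/It\^o argument that produced \eqref{k22b}: apply It\^o's formula to $\lVert X^{d,K,\varepsilon,t,x}_s\rVert^2$, take expectations against the stopping times defined as in \eqref{x00}, and use \eqref{m01}, \eqref{m03}, and \eqref{m04} to bound the drift, diffusion, and jump contributions by $2c(d^c+\lVert X^{d,K,\varepsilon,t,x}_{\max\{t,\rdown{r-}_K\}}\rVert^2)$. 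Setting $h(s):=\sup_{u\in[t,s]}\E[d^c+\lVert X^{d,K,\varepsilon,t,x}_u\rVert^2]$ and exploiting $h(\max\{t,\rdown{r-}_K\})\leq h(r)$, the inequality reduces to $h(s)\leq d^c+\lVert x\rVert^2+7c\int_t^s h(r)\,dr$, and Gr\"onwall yields the claimed exponential bound.

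For item \eqref{k94}, the preparatory estimate I need is the $L^2$ strong Euler--Maruyama error
\[
\E\!\left[\lVert X^{d,\varepsilon,t,x}_s-X^{d,K,\varepsilon,t,x}_s\rVert^2\right]\leq C_{c,T}(d^c+\lVert x\rVert^2)\frac{T}{K},
\]
which is standard for jump-SDEs: subtract \eqref{SDE X epsilon} and \eqref{SDE X epsilon K}, apply Jensen's inequality to the drift term and It\^o's isometry (in its compensated-Poisson form) to the Brownian and jump terms, use the Lipschitz property \eqref{Lip NN coe} of $\beta^d_\varepsilon,\sigma^d_\varepsilon,\gamma^d_\varepsilon$, and control the ``freezing error'' $\E[\lVert X^{d,K,\varepsilon,t,x}_r-X^{d,K,\varepsilon,t,x}_{\max\{t,\rdown{r-}_K\}}\rVert^2]$ by $C'_{c,T}(d^c+\lVert x\rVert^2)T/K$ using item \eqref{k69b} together with \eqref{m01}--\eqref{m04}. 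Gr\"onwall then closes this estimate.

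With the strong EM bound in hand, item \eqref{k94} follows the same pattern as the stability estimate in \cref{x01}. I would subtract \eqref{FK u K epsilon} from \eqref{FK u epsilon}, use \eqref{Lip g epsilon} to control the terminal-condition contribution by $(cd^cT^{-1})^{1/2}(\E[\lVert X^{d,\varepsilon,t,x}_T-X^{d,K,\varepsilon,t,x}_T\rVert^2])^{1/2}$, and split the nonlinearity contribution as
\[
f_\varepsilon(u^{d,\varepsilon}(r,X^{d,\varepsilon,t,x}_r))-f_\varepsilon(u^{d,K,\varepsilon}(r,X^{d,K,\varepsilon,t,x}_r))
\]
into one piece bounded by the Lipschitz-in-$x$ bound for $u^{d,\varepsilon}$ established in \cref{x01} combined with the strong EM rate, and a second piece giving a Gr\"onwall-type iterate in $\E[|u^{d,K,\varepsilon}(r,X^{d,\varepsilon,s,x}_r)-u^{d,\varepsilon}(r,X^{d,\varepsilon,s,x}_r)|]$. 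A final Gr\"onwall step, combined with careful bookkeeping of the prefactors, should reproduce the announced constant $12c^{3/2}d^{c/2}(T+2)e^{21cT+5cT^2}$.

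The main obstacle I anticipate is the clean derivation of the strong EM rate with the correct polynomial dependence on $d$: one must track carefully how \eqref{k69b} interacts with the Lipschitz constant in \eqref{Lip NN coe} and the linear-growth bounds \eqref{m01}--\eqref{m04} so that the final prefactor is at most $d^{O(c)}$ and independent of $\varepsilon$. Choosing the right sequencing of steps (moment bound, then freezing error, then global $L^2$ difference, then SFPE comparison) is essential to keep the exponential constants under control and match the stated form.
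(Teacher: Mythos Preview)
Your plan for item \eqref{k69b} and for the strong Euler--Maruyama error is essentially what the paper does, and is fine. The gap is in the very last step, where you propose to ``mirror \cref{x01}'' and close a Gr\"onwall loop in the quantity $\E[|u^{d,K,\varepsilon}(r,X^{d,\varepsilon,s,x}_r)-u^{d,\varepsilon}(r,X^{d,\varepsilon,s,x}_r)|]$. The disintegration/flow argument that drives \cref{x01} does not close here. If you split with $X^{d,K,\varepsilon,t,x}_r$ in the middle (as you must, since you say the first piece is handled by the Lipschitz bound for $u^{d,\varepsilon}$), the leftover piece is $\E[|u^{d,\varepsilon}(r,X^{d,K,\varepsilon,t,x}_r)-u^{d,K,\varepsilon}(r,X^{d,K,\varepsilon,t,x}_r)|]$, with the \emph{Euler} process as carrier. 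To turn this back into your proposed Gr\"onwall variable you would need a flow property $X^{d,K,\varepsilon,t,X^{d,K,\varepsilon,s,x}_t}_r=X^{d,K,\varepsilon,s,x}_r$, but the Euler scheme on the fixed grid $\{0,T/K,\ldots,T\}$ freezes at $\max\{t,\rdown{r-}_K\}$, which depends on the starting time, so no such flow identity is available for general $s\le t$. If instead you split with $X^{d,\varepsilon,t,x}_r$ in the middle (so that the flow property of $X^{d,\varepsilon}$ can be used), the remaining piece requires a Lipschitz-in-$x$ bound for $u^{d,K,\varepsilon}$, which you have not established.

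The paper sidesteps both issues by abandoning the flow/disintegration device altogether in this proof and running a Gr\"onwall inequality in the weighted sup-norm
\[
\sup_{y\in\R^d}\frac{|u^{d,K,\varepsilon}(r,y)-u^{d,\varepsilon}(r,y)|}{e^{3.5c(T-r)}(d^c+\lVert y\rVert^2)^{1/2}}.
\]
Concretely, one bounds $\E[|u^{d,K,\varepsilon}(r,X^{d,K,\varepsilon,t,x}_r)-u^{d,\varepsilon}(r,X^{d,K,\varepsilon,t,x}_r)|]$ by this sup-norm times $\E[(d^c+\lVert X^{d,K,\varepsilon,t,x}_r\rVert^2)^{1/2}]$, uses item \eqref{k69b} to control the latter, and then divides the resulting pointwise inequality for $|u^{d,K,\varepsilon}(t,x)-u^{d,\varepsilon}(t,x)|$ by $e^{3.5c(T-t)}(d^c+\lVert x\rVert^2)^{1/2}$ and takes the supremum over $x$. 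This yields a closed Gr\"onwall inequality in $t$ for the weighted sup-norm, with no flow property required. Once you see this, the constant bookkeeping you anticipate is routine.
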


\begin{proof}
[Proof of \cref{k32}]
First,
for all
$d\in \N$, $\varepsilon\in(0,1)$,
$t\in[0,T]$, 
$x,y\in \R^d$ we have that
\begin{align}
\E \!\left[d^c+\left\lVert X^{d,\varepsilon,t,x}_{s}\right\rVert^2
\right]
\leq  (d^c+\lVert x\rVert^2)e^{7c(s-t)}\label{x63}
\end{align}
and
\begin{align}
\left\lvert
u^{d,\varepsilon}(t,x)-
u^{d,\varepsilon}(t,y)
\right\rvert\leq 
2(cd^cT^{-1})^\frac{1}{2}
\lVert x-y\rVert e^{5cT + 2cT^2}\label{k97b}
\end{align}
(cf. \cref{x01}).
Next,
the triangle inequality, the fact that
$\forall\,a_1,a_2\in \R\colon (a_1+a_2)^2
\leq 2\lvert a_1\rvert^2+2\lvert a_2\rvert^2$, 
\eqref{Lip NN coe}, and \eqref{growth NN coe} show for all  $d\in \N$, 
$x\in \R^d$, $\varepsilon\in (0,1)$  that
\begin{align}
\lVert
\beta^d_\varepsilon(x)
\rVert^2\leq 
2\lVert\beta^d_\varepsilon(0)\rVert^2
+2\lVert\beta^d_\varepsilon(x)-\beta^d_\varepsilon(0)
\rVert^2\leq  2\xeqref{growth NN coe}cd^c+\xeqref{Lip NN coe}2c\lVert x\rVert^2 
= 2c(d^c+\lVert x\rVert^2).\label{a01}
\end{align}
Similarly, we have for all  $d\in \N$, 
$x\in \R^d$, $\varepsilon\in (0,1)$  that
\begin{align}\label{a03}
\left\lVert
\sigma^d_\varepsilon(x)
\right\rVert^2_{\mathrm{F}}\leq  2c(d^c+\lVert x\rVert^2).
\end{align}
Moreover,
the triangle inequality, the fact that
$\forall\,a_1,a_2\in \R\colon (a_1+a_2)^2\leq 2\lvert a_1\rvert^2+2\lvert a_2\rvert^2$, \eqref{Lip NN coe}, and \eqref{growth NN coe} show for all  $d\in \N$, 
$x\in \R^d$, $\varepsilon\in (0,1)$  that
\begin{align} \begin{split} 
\int_{\R^d\setminus \{0\}}
\left\lVert
\gamma^d_\varepsilon(x,z)\right\rVert^2\nu^d(dz)
&\leq 
\int_{\R^d\setminus \{0\}}
2
\left\lVert
\gamma^d_\varepsilon(0,z)\right\rVert^2
+2\left\lVert
\gamma^d_\varepsilon(x,z)-\gamma^d_\varepsilon(0,z)\right\rVert
^2\nu^d(dz)\\&\leq \xeqref{growth NN coe}2cd^c+
\xeqref{Lip NN coe}2c\lVert x\rVert^2=2c(d^c+\lVert x\rVert^2).
\end{split}\label{a04}
\end{align}
Next, It\^o's formula (see, e.g., \cite[Theorem 3.1]{GW2021}) and \eqref{SDE X epsilon K} imply 
for all  $d,K\in \N$, $t\in [0,T]$, $s\in [t,T]$,
$x\in \R^d$, $\varepsilon\in (0,1)$  that
$\P$-a.s.\
\begin{align} \begin{split} 
\left\lVert X^{d,K,\varepsilon,t,x}_{s}\right\rVert^2
&=\lVert x\rVert^2
+\int_{t}^{s} \left(2\left\langle
X^{d,K,\varepsilon,t,x}_{\max\{t,\rdown{r-}_K\}},
\beta^d_\varepsilon(X^{d,K,\varepsilon,t,x}_{\max\{t,\rdown{r-}_K\}})
\right\rangle
+\left\lVert\sigma_\varepsilon^d(X^{d,K,\varepsilon,t,x}_{\max\{t,\rdown{r-}_K\}})\right\rVert^2_{\mathrm{F}}\right)dr
\\
&\quad+2
\int_{t}^{s}\sum_{i,j=1}^{d}
\left(X^{d,K,\varepsilon,t,x}_{\max\{t,\rdown{r-}_K\}}\right)_i
\left(\sigma_\varepsilon^d(X^{d,K,\varepsilon,t,x}_{\max\{t,\rdown{r-}_K\}})\right)_{ij}d(W^{d}_j)_r\\
&\quad +2\int_{t}^{s}\int_{\R^d\setminus\{0\}}
\left\langle
X^{d,K,\varepsilon,t,x}_{\max\{t,\rdown{r-}_K\}},
\gamma^d_\varepsilon(X^{d,K,\varepsilon,t,x}_{\max\{t,\rdown{r-}_K\}},z)
\right\rangle
\tilde{N}^d(dz,dr)\\&\quad +
\int_{t}^{s}\int_{\R^d\setminus\{0\}}
\left\lVert
\gamma^d_\varepsilon(X^{d,K,\varepsilon,t,x}_{\max\{t,\rdown{r-}_K\}},z)
\right\rVert^2N^d(dz,dr).
\end{split}\label{x93}
\end{align}
In addition, 
for every
  $d,n,K\in \N$, 
$x\in \R^d$, $\varepsilon\in (0,1)$ let $
\tau_n^{d,K,\varepsilon,x}\colon \Omega\to \R$ satisfy that
\begin{align} \begin{split} &
\tau_n^{d,K,\varepsilon,x}
=\inf \Biggl\{s\in [t,T] 
\colon \int_{t}^{s} \left(2\left\langle
X^{d,K,\varepsilon,t,x}_{\max\{t,\rdown{r-}_K\}},
\beta^d_\varepsilon(X^{d,K,\varepsilon,t,x}_{\max\{t,\rdown{r-}_K\}})
\right\rangle
+\left\lVert\sigma_\varepsilon^d(X^{d,K,\varepsilon,t,x}_{\max\{t,\rdown{r-}_K\}})\right\rVert^2_{\mathrm{F}}\right)dr
\\&\qquad\qquad\qquad
+\int_{t}^{s}\sum_{i,j=1}^{d}
\left\lvert\left(X^{d,K,\varepsilon,t,x}_{\max\{t,\rdown{r-}_K\}}\right)_i
\left(\sigma_\varepsilon^d(X^{d,K,\varepsilon,t,x}_{\max\{t,\rdown{r-}_K\}})\right)_{ij}
\right\rvert^2
dr\\&\qquad\qquad\qquad +\int_{t}^{s}\int_{\R^d\setminus \{0\}}
\sum_{i=1}^{d}\left\lvert
\left(X^{d,K,\varepsilon,t,x}_{\max\{t,\rdown{r-}_K\}}\right)_i
\left(\gamma^d_\varepsilon(X^{d,K,\varepsilon,t,x}_{\max\{t,\rdown{r-}_K\}},z)\right)_i
\right\rvert^2\nu^d(dz)\,dr\\&\qquad\qquad\qquad
+\int_{t}^{s}\int_{\R^d\setminus\{0\}}
\left\lVert
\gamma^d_\varepsilon(X^{d,K,\varepsilon,t,x}_{\max\{t,\rdown{r-}_K\}},z)
\right\rVert^2_{\mathrm{F}}\nu^d(dz)\,dr
\geq n
\Biggr\}\wedge T\end{split}
\end{align}
(with the convention that $\inf\emptyset=\infty$).
Then \eqref{x93}, the fact that
$\forall\,d\in \N, x,y\in \R^d\colon 2\langle x,y\rangle\leq \lVert x\rVert^2+\lVert y\rVert^2$,
\eqref{a01}, \eqref{a03}, and \eqref{a04} show
for all  $d,n,K\in \N$, $t\in [0,T]$, $s\in [t,T]$,
$x\in \R^d$, $\varepsilon\in (0,1)$  that
\begin{align} \begin{split} 
&\max \!\left\{\E \!\left[d^c +\left\lVert X^{d,K,\varepsilon,t,x}_{
\max\{t,\rdown{
s\wedge \tau_n^{d,K,\varepsilon,x}}_K \}
}\right\rVert^2
\right]
,
\E \!\left[d^c +\left\lVert X^{d,K,\varepsilon,t,x}_{
\max\{t,
s\wedge \tau_n^{d,K,\varepsilon,x} \}
}\right\rVert^2
\right]\right\}
\\&\leq \xeqref{x93}d^c +\lVert x\rVert^2 +
\E \!\left[ \int_{t}^{
s\wedge \tau_n^{d,K,\varepsilon,x} } \left(2\left\langle
X^{d,K,\varepsilon,t,x}_{\max\{t,\rdown{r}_K\}},
\beta^d_\varepsilon(X^{d,K,\varepsilon,t,x}_{\max\{t,\rdown{r}_K\}})
\right\rangle
+\left\lVert\sigma_\varepsilon^d(X^{d,K,\varepsilon,t,x}_{\max\{t,\rdown{r}_K\}})\right\rVert^2_{\mathrm{F}}\right)dr\right]\\
&\quad+
\E\!\left[
\int_{t}^{s\wedge \tau_n^{d,K,\varepsilon,x}}\int_{\R^d\setminus\{0\}}
\left\lVert
\gamma^d_\varepsilon(X^{d,K,\varepsilon,t,x}_{\max\{t,\rdown{r}_K\}},z)
\right\rVert^2\nu^d(dz)\,dr
\right]\\
&\leq d^c + 
\lVert x\rVert^2+
\E \!\left[ \int_{t}^{s\wedge \tau_n^{d,K,\varepsilon,x}}
\left\lVert
X^{d,K,\varepsilon,t,x}_{\max\{t,\rdown{r}_K\}}\right\rVert^2dr\right]
+
\E \!\left[ \int_{t}^{s\wedge \tau_n^{d,K,\varepsilon,x}}
\left\lVert
\beta^d_\varepsilon(X^{d,K,\varepsilon,t,x}_{\max\{t,\rdown{r}_K\}})\right\rVert^2dr\right]\\
&\quad 
+
\E \!\left[ \int_{t}^{s\wedge \tau_n^{d,K,\varepsilon,x}} \left\lVert\sigma_\varepsilon^d(X^{d,K,\varepsilon,t,x}_{\max\{t,\rdown{r}_K\}})\right\rVert^2_{\mathrm{F}}dr\right]\\
&\quad+
\E\!\left[
\int_{t}^{s\wedge \tau_n^{d,K,\varepsilon,x}}\int_{\R^d\setminus\{0\}}
\left\lVert
\gamma^d_\varepsilon(X^{d,K,\varepsilon,t,x}_{\max\{t,\rdown{r}_K\}},z)
\right\rVert^2\nu^d(dz)\,dr
\right].\end{split}
\end{align}
This,
\eqref{m01}--\eqref{m04}, and the fact that $c\geq1$ show
for all  $d,K,n\in \N$, $t\in [0,T]$, $s\in [t,T]$,
$x\in \R^d$, $\varepsilon\in (0,1)$  that
\begin{align} \begin{split} 
&\max \!\left\{\E \!\left[d^c +\left\lVert X^{d,K,\varepsilon,t,x}_{
\max\{t,\rdown{
s\wedge \tau_n^{d,K,\varepsilon,x}}_K \}
}\right\rVert^2
\right]
,
\E \!\left[d^c +\left\lVert X^{d,K,\varepsilon,t,x}_{
s\wedge \tau_n^{d,K,\varepsilon,x} 
}\right\rVert^2
\right]\right\}
\\
&\leq d^c + 
\lVert x\rVert^2+
\E \!\left[ \int_{t}^{s\wedge \tau_n^{d,K,\varepsilon,x}}\left(
d^c +
\left\lVert
X^{d,K,\varepsilon,t,x}_{\max\{t,\rdown{r}_K\}}\right\rVert^2\right)dr\right]\\
&\quad 
+
\E \!\left[ \int_{t}^{s\wedge \tau_n^{d,K,\varepsilon,x}}\xeqref{m01}
\xeqref{m03}
\xeqref{m04}
6c\left(d^c +
\left\lVert
X^{d,K,\varepsilon,t,x}_{\max\{t,\rdown{r}_K\}}\right\rVert^2\right) dr\right]\\
&\leq d^c + 
\lVert x\rVert^2+7c
\E \!\left[ \int_{t}^{s\wedge \tau_n^{d,K,\varepsilon,x}}\left(
d^c +
\left\lVert
X^{d,K,\varepsilon,t,x}_{\max\{t,\rdown{r}_K\}}\right\rVert^2\right)dr\right]\\
&\leq 
d^c + 
\lVert x\rVert^2+7c\int_{t}^{s}
\E \!\left[ 
d^c +
\left\lVert
X^{d,K,\varepsilon,t,x}_{\max\{t,\rdown{r 
\wedge \tau_{n}^{d,K,\varepsilon,x}}_K 
  \}}\right\rVert^2dr\right]
\end{split}\label{x59}
\end{align}
Hence, Fatou's lemma and Gr\"onwall's inequality  imply
for all  $d,K\in \N$, $t\in [0,T]$, $s\in [t,T]$,
$x\in \R^d$, $\varepsilon\in (0,1)$  that
\begin{align} \begin{split} 
\E \!\left[d^c +\left\lVert X^{d,K,\varepsilon,t,x}_{
\max\{t,\rdown{
s}_K \}
}\right\rVert^2
\right]
&\leq \liminf_{n\to\infty}
\E \!\left[d^c +\left\lVert X^{d,K,\varepsilon,t,x}_{\max\{t,\rdown{
s\wedge \tau_n^{d,K,\varepsilon,x}}_K \}}\right\rVert^2
\right]\\
&\leq (d^c +\lVert x\rVert^2)e^{7c(s-t)}.\end{split}\label{k22c}
\end{align}
Thus, Fatou's lemma,
\eqref{x59}, and the fact that
$\forall\,t\in [0,T], s\in [t,T]\colon 1+7c\int_{t}^{s} e^{7c(r-t)}\,dr=1+ e^{7c(r-t)}|_{r=s}^t=e^{7c(s-t)}$
ensure
for all  $d,K\in \N$, $t\in [0,T]$, $s\in [t,T]$,
$x\in \R^d$, $\varepsilon\in (0,1)$  that
\begin{align} \begin{split} 
\E \!\left[d^c +\left\lVert X^{d,K,\varepsilon,t,x}_{
s
}\right\rVert^2
\right]&\leq \liminf_{n\to\infty}
\E \!\left[d^c +\left\lVert X^{d,K,\varepsilon,t,x}_{
s\wedge \tau_n^{d,K,\varepsilon,x} 
}\right\rVert^2
\right]\\
&
\leq\xeqref{x59} d^c + 
\lVert x\rVert^2+7c
\E \!\left[ \int_{t}^{s}\left(
d^c +
\left\lVert
X^{d,K,\varepsilon,t,x}_{\max\{t,\rdown{r}_K\}}\right\rVert^2\right)dr\right]\\
&\leq 
d^c + 
\lVert x\rVert^2+7c\int_{t}^{s}\xeqref{k22c}(d^c +\lVert x\rVert^2)e^{7c(r-t)}\,dr\\
&=(
d^c + 
\lVert x\rVert^2)(1+7c\int_{t}^{s} e^{7c(r-t)}\,dr)\\
&=(
d^c + 
\lVert x\rVert^2)e^{7c(s-t)}.
\end{split}\label{k69}
\end{align}
This shows \eqref{k69b}.

Next, H\"older's inequality,  \eqref{a01}, and \eqref{k69} imply for all
 $d,K\in \N$, $t\in [0,T]$, $s,s'\in [t,T]$,
$x\in \R^d$, $\varepsilon\in (0,1)$  that
\begin{align} \begin{split} 
\E\!\left[\left\lVert
\int_{s}^{s'}
\beta^d_\varepsilon(
X^{d,K,\varepsilon,t,x}_{\max\{t,\rdown{r-}_K\}}
)\,dr\right\rVert^2\right]
&
\leq 
\E\!\left[\left(
\int_{s}^{s'}\left\lVert
\beta^d_\varepsilon(
X^{d,K,\varepsilon,t,x}_{\max\{t,\rdown{r-}_K\}}
)\right\rVert dr\right)^2\right]\\
&\leq
\E\!\left[\lvert s'-s\rvert
\int_{s}^{s'}\left\lVert
\beta^d_\varepsilon(
X^{d,K,\varepsilon,t,x}_{\max\{t,\rdown{r-}_K\}}
)\right\rVert^2dr\right]\\
&
\leq T\lvert s'-s\rvert
\sup_{r\in [s,s']}\E\!\left[\xeqref{a01}
2c\left(d^c+\left\lVert
X^{d,K,\varepsilon,t,x}_{\max\{t,\rdown{r-}_K\}}
\right\rVert^2\right)\right]\\
&\leq 
T\lvert s'-s\rvert\cdot \xeqref{k69}2c(d^c+\lVert x\rVert^2)e^{7cT}.
\end{split}\label{m71}\end{align}
In addition, It\^o's isometry, \eqref{a03}, and \eqref{k69}
show for all
 $d,K\in \N$, $t\in [0,T]$, $s,s'\in [t,T]$,
$x\in \R^d$, $\varepsilon\in (0,1)$  that
\begin{align} \begin{split} 
\E\!\left[\left\lVert
\int_{s}^{s'}
\sigma^d_\varepsilon(
X^{d,K,\varepsilon,t,x}_{\max\{t,\rdown{r-}_K\}}
)\,dW_r^{d}\right\rVert^2\right]
&=
\E\!\left[
\int_{s}^{s'}
\left\lVert
\sigma^d_\varepsilon(
X^{d,K,\varepsilon,t,x}_{\max\{t,\rdown{r-}_K\}}
)\right\rVert^2_{\mathrm{F}}dr\right]\\
&\leq \lvert s'-s\rvert
\sup_{r\in [s,s']}\E\!\left[\xeqref{a03}
2c\left(d^c+\left\lVert
X^{d,K,\varepsilon,t,x}_{\max\{t,\rdown{r-}_K\}}
\right\rVert^2\right)\right]\\&\leq \lvert s'-s\rvert\cdot \xeqref{k69}2c(d^c+\lVert x\rVert^2)e^{7cT}.
\end{split}\label{m72}
\end{align}
Next, It\^o's isometry, \eqref{a04}, and \eqref{k69}
imply for all
 $d,K\in \N$, $t\in [0,T]$, $s,s'\in [t,T]$,
$x\in \R^d$, $\varepsilon\in (0,1)$  that
\begin{align} \begin{split} 
&\E\!\left[\left\lVert
\int_{s}^{s'}
\int_{\R^d\setminus \{0\}}
\gamma^d_\varepsilon(
X^{d,K,\varepsilon,t,x}_{\max\{t,\rdown{r-}_K\}},z)\,\tilde{N}^{d}(dz,dr)
\right\rVert^2\right]\\&=
\E\!\left[
\int_{s}^{s'}
\int_{\R^d\setminus \{0\}}
\left\lVert
\gamma^d_\varepsilon(
X^{d,K,\varepsilon,t,x}_{\max\{t,\rdown{r-}_K\}},z)
\right\rVert^2
\nu^{d}(dz)\,dr
\right]\\
&\leq \lvert s'-s\rvert
\sup_{r\in [s,s']}\E\!\left[\xeqref{a04}
2c\left(d^c+\left\lVert
X^{d,K,\varepsilon,t,x}_{\max\{t,\rdown{r-}_K\}}
\right\rVert^2\right)\right]\\&\leq \lvert s'-s\rvert\cdot \xeqref{k69}2c(d^c+\lVert x\rVert^2)e^{7cT}.\end{split}\label{m73}
\end{align}
This, \eqref{SDE X epsilon K}, the fact that
$\forall\,d\in \N,x,y,z\in \R^d\colon 
\lVert x+y+z \rVert^2\leq 3\lVert x\rVert^2+3\lVert y\rVert^2+3\lVert z\rVert^2
$, \eqref{m71}, and \eqref{m72} demonstrate for all
 $d,K\in \N$, $t\in [0,T]$, $s,s'\in [t,T]$,
$x\in \R^d$, $\varepsilon\in (0,1)$  that
\begin{align} \begin{split} 
&\E\!\left[\left\lVert
X^{d,K,\varepsilon,t,x}_{s'}-
X^{d,K,\varepsilon,t,x}_{s}\right\rVert^2\right]
\\&\leq\xeqref{SDE X epsilon K} 3
\E\!\left[\left\lVert
\int_{s}^{s'}
\beta^d_\varepsilon(
X^{d,K,\varepsilon,t,x}_{\max\{t,\rdown{r-}_K\}}
)\,dr\right\rVert^2\right]
+3 \E\!\left[\left\lVert
\int_{s}^{s'}
\sigma^d_\varepsilon(
X^{d,K,\varepsilon,t,x}_{\max\{t,\rdown{r-}_K\}}
)\,dW_r^{d}\right\rVert^2\right]\\&\quad +
3\E\!\left[\left\lVert
\int_{s}^{s'}
\int_{\R^d\setminus \{0\}}
\gamma^d_\varepsilon(
X^{d,K,\varepsilon,t,x}_{\max\{t,\rdown{r-}_K\}},z)\,\tilde{N}^{d}(dz,dr)\right\rVert^2\right]\\
&\leq 3\cdot \xeqref{m71}T\lvert s'-s\rvert\cdot 2c(d^c+\lVert x\rVert^2)e^{7cT}
+3\cdot \xeqref{m72}
\lvert s'-s\rvert\cdot 2c(d^c+\lVert x\rVert^2)e^{7cT}
\\&\quad +3\cdot \xeqref{m73}
\lvert s'-s\rvert\cdot 2c(d^c+\lVert x\rVert^2)e^{7cT}
\\
&=6c(T+2)e^{7cT}(d^c+\lVert x\rVert^2)\lvert s'-s\rvert
.
\end{split}\label{t74}
\end{align}
Hence,
H\"older's inequality, \eqref{Lip NN coe}, the fact that
$\forall\,d\in\N,x,y\in \R^d\colon \lVert x+y\rVert^2\leq 2\lVert x\rVert^2+2\lVert y\rVert^2$,
and
\eqref{t74}
ensure for all
 $d,K\in \N$, $t\in [0,T]$, $s\in [t,T]$,
$x\in \R^d$, $\varepsilon\in (0,1)$  that
\begin{align} \begin{split} 
&\E \!\left[
\left\lVert
\int_{t}^{s}
\left(\beta^d_\varepsilon(
X^{d,K,\varepsilon,t,x}_{\max\{t,\rdown{r-}_K\}}
)-\beta^d_\varepsilon(
X^{d,\varepsilon,t,x}_{r-}
)\right)
dr\right\rVert^2
\right]\\
&
\leq 
\E \!\left[
\left(
\int_{t}^{s}
\left\lVert
\beta^d_\varepsilon(
X^{d,K,\varepsilon,t,x}_{\max\{t,\rdown{r-}_K\}}
)-\beta^d_\varepsilon(
X^{d,\varepsilon,t,x}_{r-}
)
\right\rVert
dr\right)^2
\right]\\
&\leq T
\E \!\left[
\int_{t}^{s}
\left\lVert
\beta^d_\varepsilon(
X^{d,K,\varepsilon,t,x}_{\max\{t,\rdown{r-}_K\}}
)-\beta^d_\varepsilon(
X^{d,\varepsilon,t,x}_{r-}
)
\right\rVert^2
dr
\right]\\&\leq T
\E \!\left[
\int_{t}^{s}\xeqref{Lip NN coe}
c
\left\lVert
X^{d,K,\varepsilon,t,x}_{\max\{t,\rdown{r-}_K\}}
-X^{d,\varepsilon,t,x}_{r-}
\right\rVert^2
dr
\right]\\
&\leq 
2cT\int_{t}^{s}
\E \!\left[
\left\lVert
X^{d,K,\varepsilon,t,x}_{\max\{t,\rdown{r}_K\}}
-
X^{d,K,\varepsilon,t,x}_{r}
\right\rVert^2\right]
+
2cT\int_{t}^{s}
\E \!\left[
\left\lVert
X^{d,K,\varepsilon,t,x}_{r}-
X^{d,\varepsilon,t,x}_{r}
\right\rVert^2\right]
dr\\
&\leq 2cT (s-t)\cdot\xeqref{t74} 6c(T+2)e^{7cT}(d^c+\lVert x\rVert^2)\frac{T}{K}
+
2cT\int_{t}^{s}
\E \!\left[
\left\lVert
X^{d,K,\varepsilon,t,x}_{r}-
X^{d,\varepsilon,t,x}_{r}
\right\rVert^2\right]
dr\\
&\leq   12c^2T^2(T+2)e^{7cT}(d^c+\lVert x\rVert^2)\frac{T}{K}
+
2cT\int_{t}^{s}
\E \!\left[
\left\lVert
X^{d,K,\varepsilon,t,x}_{r}-
X^{d,\varepsilon,t,x}_{r}
\right\rVert^2\right]
dr.
\end{split}\label{a75}
\end{align}
Next, It\^o's isometry, \eqref{Lip NN coe},
the fact that
$\forall\,d\in\N,x,y\in \R^d\colon \lVert x+y\rVert^2\leq 2\lVert x\rVert^2+2\lVert y\rVert^2$, and \eqref{t74}
show for all
 $d,K\in \N$, $t\in [0,T]$, $s\in [t,T]$,
$x\in \R^d$, $\varepsilon\in (0,1)$  that
\begin{align}
 \begin{split}
&
\E\! \left[\left\lVert\int_{t}^{s}
\left(
\sigma^d_\varepsilon(
X^{d,K,\varepsilon,t,x}_{\max\{t,\rdown{r-}_K\}}
)-\sigma^d_\varepsilon(
X^{d,\varepsilon,t,x}_{r-})\right)
dW_r^{d}\right\rVert^2
\right]\\&
=
\E\! \left[\int_{t}^{s}
\left\lVert
\sigma^d_\varepsilon(
X^{d,K,\varepsilon,t,x}_{\max\{t,\rdown{r-}_K\}}
)-\sigma^d_\varepsilon(
X^{d,\varepsilon,t,x}_{r-})
\right\rVert^2
dr
\right]\\
&\leq 
\E\! \left[\int_{t}^{s}
c
\left\lVert
X^{d,K,\varepsilon,t,x}_{\max\{t,\rdown{r-}_K\}}
-
X^{d,\varepsilon,t,x}_{r-}
\right\rVert^2
dr
\right]\\
&\leq 2c
\int_{t}^{s}
\E\! \left[
\left\lVert
X^{d,K,\varepsilon,t,x}_{\max\{t,\rdown{r}_K\}}
-
X^{d,K,\varepsilon,t,x}_{r}
\right\rVert^2
\right]dr+
2c
\int_{t}^{s}
\E\! \left[
\left\lVert
X^{d,K,\varepsilon,t,x}_{r}-
X^{d,\varepsilon,t,x}_{r}
\right\rVert^2
\right]dr\\
&\leq 2c (s-t)\xeqref{t74}\cdot 6c(T+2)e^{7cT}
(d^c+\lVert x\rVert^2)
\frac{T}{K}+
2c
\int_{t}^{s}
\E\! \left[
\left\lVert
X^{d,K,\varepsilon,t,x}_{r}-
X^{d,\varepsilon,t,x}_{r}
\right\rVert^2
\right]dr
\\
&
=12c^2T(T+2)e^{7cT}(d^c+\lVert x\rVert^2)\frac{T}{K}+
2c
\int_{t}^{s}
\E\! \left[
\left\lVert
X^{d,K,\varepsilon,t,x}_{r}-
X^{d,\varepsilon,t,x}_{r}
\right\rVert^2
\right]dr .
\end{split}\label{a76}
\end{align}
Moreover, It\^o's isometry, \eqref{Lip NN coe},
the fact that
$\forall\,d\in\N,x,y\in \R^d\colon \lVert x+y\rVert^2\leq 2\lVert x\rVert^2+2\lVert y\rVert^2$, and \eqref{t74}
imply for all
 $d,K\in \N$, $t\in [0,T]$, $s\in [t,T]$,
$x\in \R^d$, $\varepsilon\in (0,1)$  that
\begin{align} \begin{split} 
&
\E\!\left[\left\lVert
\int_{t}^{s}
\int_{\R^d\setminus \{0\}}\left(
\gamma^d_\varepsilon(
X^{d,K,\varepsilon,t,x}_{\max\{t,\rdown{r-}_K\}},z)
-\gamma^d_\varepsilon(
X^{d,\varepsilon,t,x}_{r-},z)\right)
\tilde{N}^{d}(dz,dr)\right\rVert^2\right]\\
&
=
\E\!\left[
\int_{t}^{s}
\int_{\R^d\setminus \{0\}}
\left\lVert
\gamma^d_\varepsilon(
X^{d,K,\varepsilon,t,x}_{\max\{t,\rdown{r-}_K\}},z)
-\gamma^d_\varepsilon(
X^{d,\varepsilon,t,x}_{r-},z)
\right\rVert^2
\nu^{d}(dz)dr
\right]\\
&\leq 
\E\!\left[
\int_{t}^{s}c
\left\lVert
X^{d,K,\varepsilon,t,x}_{\max\{t,\rdown{r-}_K\}}
-
X^{d,\varepsilon,t,x}_{r-}
\right\rVert^2dr
\right]
\\
&\leq 2c
\int_{t}^{s}
\E\! \left[
\left\lVert
X^{d,K,\varepsilon,t,x}_{\max\{t,\rdown{r}_K\}}
-
X^{d,K,\varepsilon,t,x}_{r}
\right\rVert^2
\right]dr+
2c
\int_{t}^{s}
\E\! \left[
\left\lVert
X^{d,K,\varepsilon,t,x}_{r}-
X^{d,\varepsilon,t,x}_{r}
\right\rVert^2
\right]dr\\
&\leq 2c (s-t)\xeqref{t74}\cdot 6c(T+2)e^{7cT}(d^c+\lVert x\rVert^2)\frac{T}{K}+
2c
\int_{t}^{s}
\E\! \left[
\left\lVert
X^{d,K,\varepsilon,t,x}_{r}-
X^{d,\varepsilon,t,x}_{r}
\right\rVert^2
\right]dr
\\
&
\leq 12c^2T(T+2)e^{7cT}(d^c+\lVert x\rVert^2)\frac{T}{K}+
2c
\int_{t}^{s}
\E\! \left[
\left\lVert
X^{d,K,\varepsilon,t,x}_{r}-
X^{d,\varepsilon,t,x}_{r}
\right\rVert^2
\right]dr 
.
\end{split}\label{a77}\end{align}
This, the fact that
$\forall\,d\in \N,x,y,z\in \R^d\colon 
\lVert x+y+z \rVert^2\leq 3\lVert x\rVert^2+3\lVert y\rVert^2+3\lVert z\rVert^2
$,
\eqref{a75}, and \eqref{a76}
imply for all
 $d,K\in \N$, $t\in [0,T]$, $s\in [t,T]$,
$x\in \R^d$, $\varepsilon\in (0,1)$  that
\begin{align} \begin{split} 
&\E\!\left[
\left\lVert X^{d,K,\varepsilon,t,x}_{s}
-
X^{d,\varepsilon,t,x}_{s}\right\rVert^2\right]
\\&\leq 3
\E\! \left[ \left\lVert
\int_{t}^{s}
\left(\beta^d_\varepsilon(
X^{d,K,\varepsilon,t,x}_{\max\{t,\rdown{r-}_K\}}
)-\beta^d_\varepsilon(
X^{d,\varepsilon,t,x}_{r-}
)\right)dr\right\rVert^2\right]
\\&\quad +3 \E\! \left[ \left\lVert
\int_{t}^{s}
\left(
\sigma^d_\varepsilon(
X^{d,K,\varepsilon,t,x}_{\max\{t,\rdown{r-}_K\}}
)-\sigma^d_\varepsilon(
X^{d,\varepsilon,t,x}_{r-})\right)
dW_r^{d}\right\rVert^2\right]\\&\quad +
3\E\! \left[ \left\lVert
\int_{t}^{s}
\int_{\R^d\setminus \{0\}}\left(
\gamma^d_\varepsilon(
X^{d,K,\varepsilon,t,x}_{\max\{t,\rdown{r-}_K\}},z)
-\gamma^d_\varepsilon(
X^{d,\varepsilon,t,x}_{r-},z)\right)
\tilde{N}^{d}(dz,dr)\right\rVert^2\right]\\
\end{split}
\end{align}
and
\begin{align}
 \begin{split} 
&\E\!\left[
\left\lVert X^{d,K,\varepsilon,t,x}_{s}
-
X^{d,\varepsilon,t,x}_{s}\right\rVert^2\right]\\&\leq 3\left[ \xeqref{a75}
 12c^2T^2(T+2)e^{7cT}(d^c+\lVert x\rVert^2)\frac{T}{K}
+
2cT\int_{t}^{s}
\E \!\left[
\left\lVert
X^{d,K,\varepsilon,t,x}_{r}-
X^{d,\varepsilon,t,x}_{r}
\right\rVert^2\right]
dr\right]\\
&\quad+3\left[\xeqref{a76}12c^2T(T+2)e^{7cT}(d^c+\lVert x\rVert^2)\frac{T}{K}+
2c
\int_{t}^{s}
\E\! \left[
\left\lVert
X^{d,K,\varepsilon,t,x}_{r}-
X^{d,\varepsilon,t,x}_{r}
\right\rVert^2
\right]dr \right]\\
&\quad +3\left[\xeqref{a77}
12c^2T(T+2)e^{7cT}(d^c+\lVert x\rVert^2)\frac{T}{K}+
2c
\int_{t}^{s}
\E\! \left[
\left\lVert
X^{d,K,\varepsilon,t,x}_{r}-
X^{d,\varepsilon,t,x}_{r}
\right\rVert^2
\right]dr \right]\\
&=36 c^2T(T+2)^2e^{7cT}(d^c+\lVert x\rVert^2)\frac{T}{K}+6c(T+2)
\int_{t}^{s}
\E\! \left[
\left\lVert
X^{d,K,\varepsilon,t,x}_{r}-
X^{d,\varepsilon,t,x}_{r}
\right\rVert^2
\right]dr .
\end{split}\label{x06}
\end{align}
Hence, \eqref{x63}, \eqref{k69}, and Gr\"onwall's inequality 
demonstrate for all
 $d,K\in \N$, $t\in [0,T]$, $s\in [t,T]$,
$x\in \R^d$, $\varepsilon\in (0,1)$  that
\begin{align}
\E\!\left[
\left\lVert X^{d,K,\varepsilon,t,x}_{s}
-
X^{d,\varepsilon,t,x}_{s}\right\rVert^2\right]\leq \xeqref{x06}
36c^2T(T+2)^2e^{7cT}(d^c+\lVert x\rVert^2)\frac{T}{K}
e^{6c(T+2)T}.\label{i79}
\end{align}
This,
\eqref{Lip g epsilon}, and Jensen's inequality
show for all
 $d,K\in \N$, $t\in [0,T]$, 
$x\in \R^d$, $\varepsilon\in (0,1)$  that
\begin{align} \begin{split} 
\E\!\left[\left\lvert g^d_\varepsilon(
X^{d,K,\varepsilon,t,x}_{T} )-g^d_\varepsilon(
X^{d,\varepsilon,t,x}_{T} )\right\rvert\right]
&\leq (cd^c)^\frac{1}{2}
T^{-\frac{1}{2}}
\E\!\left[\left\lVert 
X^{d,K,\varepsilon,t,x}_{T} -
X^{d,\varepsilon,t,x}_{T} \right\rVert\right]\\
&\leq (cd^c)^\frac{1}{2}
T^{-\frac{1}{2}}
\left(
\E\!\left[\left\lVert 
X^{d,K,\varepsilon,t,x}_{T} -
X^{d,\varepsilon,t,x}_{T} \right\rVert^2\right]
\right)^\frac{1}{2}\\&\leq
(cd^c)^\frac{1}{2}
T^{-\frac{1}{2}}
\left(\xeqref{i79}36 c^2T(T+2)^2e^{7cT}(d^c+\lVert x\rVert^2)\frac{T}{K}
e^{6c(T+2)T}
\right)^\frac{1}{2} \\
&\leq 6 c^\frac{3}{2}d^\frac{c}{2}(T+2)e^{10cT+3cT^2}
(d^c+\lVert x\rVert^2)^\frac{1}{2}
\frac{T^\frac{1}{2}}{K^\frac{1}{2}}.
\end{split}\label{k84}
\end{align}
 Next, \eqref{k97b}, Jensen's inequality, and \eqref{i79}
imply for all
 $d,K\in \N$, $t\in [0,T]$, $r\in[t,T]$,
$x\in \R^d$, $\varepsilon\in (0,1)$  that
\begin{align} \begin{split} 
&
\E\!\left[\left\lvert u^{d,\varepsilon}(r,
X^{d,K,\varepsilon,t,x}_{r} )
-u^{d,\varepsilon}(r,
X^{d,\varepsilon,t,x}_{r} )\right\rvert\right]
\\&\leq \xeqref{k97b} 
2(cd^cT^{-1})^\frac{1}{2}
\E \!\left[
\left\lVert X^{d,K,\varepsilon,t,x}_{r}-
X^{d,\varepsilon,t,x}_{r}
\right\rVert 
\right]
e^{5cT + 2cT^2}\\
&
\leq 2(cd^cT^{-1})^\frac{1}{2}
\left(
\E \!\left[
\left\lVert X^{d,K,\varepsilon,t,x}_{r}-
X^{d,\varepsilon,t,x}_{r}
\right\rVert ^2
\right]\right)^\frac{1}{2}
e^{5cT + 2cT^2}\\
&\leq 
2(cd^cT^{-1})^\frac{1}{2}
\left(\xeqref{i79}36 c^2T(T+2)^2e^{7cT}(d^c+\lVert x\rVert^2)\frac{T}{K}
e^{6c(T+2)T}\right)^\frac{1}{2}
e^{5cT + 2cT^2}\\
&\leq 12c^{\frac{3}{2}}d^{\frac{c}{2}}(T+2)e^{15cT+5cT^2}(d^c+\lVert x\rVert^2)^\frac{1}{2}\frac{T^\frac{1}{2}}{K^\frac{1}{2}}.
\end{split}\label{k85}\end{align}
 Furthermore,
Jensen's inequality and \eqref{k69}
show for all
 $d,K\in \N$, $t\in [0,T]$, $r\in[t,T]$,
$x\in \R^d$, $\varepsilon\in (0,1)$  that
\begin{align} \begin{split} 
&
\E\!\left[\left\lvert u^{d,K,\varepsilon}(r,
X^{d,K,\varepsilon,t,x}_{r} )-
u^{d,\varepsilon}(r,
X^{d,K,\varepsilon,t,x}_{r} )\right\rvert\right]
\\
&
\leq  e^{3.5c(T-r)}
\E \!\left[ \left(d^c+\left\lVert
X^{d,K,\varepsilon,t,x}_{r}
\right\rVert^2\right)^\frac{1}{2} \right]\sup_{y\in\R^d}
\frac{\left\lvert u^{d,K,\varepsilon}(r,y)-
u^{d,\varepsilon}(r,y)\right\rvert}{e^{3.5c(T-r)}(d^c+\lVert y\rVert^2)^{\frac{1}{2}}}\\
&\leq e^{3.5c(T-r)}
\left(
\E \!\left[ d^c+\left\lVert
X^{d,K,\varepsilon,t,x}_{r}
\right\rVert^2 \right]\right)^\frac{1}{2}
\sup_{y\in\R^d}
\frac{\left\lvert u^{d,K,\varepsilon}(r,y)-
u^{d,\varepsilon}(r,y)\right\rvert}{e^{3.5c(T-r)}(d^c+\lVert y\rVert^2)^{\frac{1}{2}}}
\\
&\leq e^{3.5c(T-r)} \xeqref{k69}(d^c+\lVert x\rVert^2)^\frac{1}{2}e^{3.5c(r-t)}
\sup_{y\in\R^d}
\frac{\left\lvert u^{d,K,\varepsilon}(r,y)-
u^{d,\varepsilon}(r,y)\right\rvert}{e^{3.5c(T-r)}(d^c+\lVert y\rVert^2)^{\frac{1}{2}}}\\
&=(d^c+\lVert x\rVert^2)^\frac{1}{2}e^{3.5c(T-t)}
\sup_{y\in\R^d}
\frac{\left\lvert u^{d,K,\varepsilon}(r,y)-
u^{d,\varepsilon}(r,y)\right\rvert}{e^{3.5c(T-r)}(d^c+\lVert y\rVert^2)^{\frac{1}{2}}}.\end{split}\label{k86}
\end{align}
 This, the triangle inequality,
\eqref{Lip g epsilon}, \eqref{k67}, the fact that
$c\geq 1$, 
\eqref{k84}, \eqref{k85}, and
the fact that
$1+cT\leq e^{cT}$ imply for all
 $d,K\in \N$, $t\in [0,T]$, 
$x\in \R^d$, $\varepsilon\in (0,1)$  that
\begin{align} \begin{split} 
&
\left\lvert
u^{d,K,\varepsilon}(t,x)-u^{d,\varepsilon}(t,x)\right\rvert\\
&\leq 
\E\!\left[\left\lvert g^d_\varepsilon(
X^{d,K,\varepsilon,t,x}_{T} )-g^d_\varepsilon(
X^{d,\varepsilon,t,x}_{T} )\right\rvert\right]
+\int_{t}^{T}
\E\!\left[\left\lvert f_\varepsilon(u^{d,K,\varepsilon}(r,
X^{d,K,\varepsilon,t,x}_{r} ))
-f_\varepsilon(u^{d,\varepsilon}(r,
X^{d,\varepsilon,t,x}_{r} ))\right\rvert
\right]dr\\
&\leq \xeqref{Lip g epsilon}\E\!\left[\left\lvert g^d_\varepsilon(
X^{d,K,\varepsilon,t,x}_{T} )-g^d_\varepsilon(
X^{d,\varepsilon,t,x}_{T} )\right\rvert\right]
+\int_{t}^{T}c
\E\!\left[\left\lvert u^{d,K,\varepsilon}(r,
X^{d,K,\varepsilon,t,x}_{r} )-
u^{d,\varepsilon}(r,
X^{d,K,\varepsilon,t,x}_{r} )\right\rvert\right]\\
&\quad+
\int_{t}^{T}c
\E\!\left[\left\lvert u^{d,\varepsilon}(r,
X^{d,K,\varepsilon,t,x}_{r} )
-u^{d,\varepsilon}(r,
X^{d,\varepsilon,t,x}_{r} )\right\rvert\right]dr\\
\end{split}
\end{align}
and 
\begin{align}
 \begin{split}
\left\lvert
u^{d,K,\varepsilon}(t,x)-u^{d,\varepsilon}(t,x)\right\rvert&\leq\xeqref{k84} 6 c^\frac{3}{2}d^\frac{c}{2}(T+2)e^{10cT+3cT^2}
(d^c+\lVert x\rVert^2)^\frac{1}{2}
\frac{T^\frac{1}{2}}{K^\frac{1}{2}}\\
&\quad +\int_{t}^{T}c
\xeqref{k86}(d^c+\lVert x\rVert^2)^\frac{1}{2}e^{3.5c(T-t)}
\sup_{y\in\R^d}
\frac{\left\lvert u^{d,K,\varepsilon}(r,y)-
u^{d,\varepsilon}(r,y)\right\rvert}{e^{3.5c(T-r)}(d^c+\lVert y\rVert^2)^{\frac{1}{2}}}
dr\\
&\quad+ Tc\cdot \xeqref{k85}
12 c^{\frac{3}{2}}d^{\frac{c}{2}}(T+2)e^{15cT+5cT^2}(d^c+\lVert x\rVert^2)^\frac{1}{2}\frac{T^\frac{1}{2}}{K^\frac{1}{2}}\\
&\leq 
12c^{\frac{3}{2}}d^{\frac{c}{2}}(T+2)e^{16cT+5cT^2}(d^c+\lVert x\rVert^2)^\frac{1}{2}\frac{T^\frac{1}{2}}{K^\frac{1}{2}}\\&\quad +\int_{t}^{T}c
 (d^c+\lVert x\rVert^2)^\frac{1}{2}e^{3.5c(T-t)}
\sup_{y\in\R^d}
\frac{\left\lvert u^{d,K,\varepsilon}(r,y)-
u^{d,\varepsilon}(r,y)\right\rvert}{e^{3.5c(T-r)} (d^c+\lVert y\rVert^2)^{\frac{1}{2}}} \,
dr.
\end{split}\label{x91}
\end{align}
 Dividing by $(d^c+\lVert x\rVert^2)^\frac{1}{2}e^{3.5c(T-t)}$ shows
for all
 $d,K\in \N$, $t\in [0,T]$, 
 $\varepsilon\in (0,1)$  that
\begin{align} \begin{split} 
&\sup_{x\in\R^d}
\frac{\left\lvert u^{d,K,\varepsilon}(t,x)-
u^{d,\varepsilon}(t,x)\right\rvert}{e^{3.5c(T-t)}(d^c+\lVert y\rVert^2)^{\frac{1}{2}}}\\
&\leq \xeqref{x91}
12 c^{\frac{3}{2}}d^{\frac{c}{2}}(T+2)e^{16cT+5cT^2}\frac{T^\frac{1}{2}}{K^\frac{1}{2}}+\int_{t}^{T}c
 \sup_{y\in\R^d}
\frac{\left\lvert u^{d,K,\varepsilon}(r,y)-
u^{d,\varepsilon}(r,y)\right\rvert}{ e^{3.5c(T-r)} (d^c+\lVert y\rVert^2)^{\frac{1}{2}}}\,
dr.\end{split}\label{x13}
\end{align}
Hence, \eqref{growth u}, \eqref{x63}, \eqref{k69}, and Gr\"onwall's inequality demonstrate for all
 $d,K\in \N$, $t\in [0,T]$, 
 $\varepsilon\in (0,1)$  that
\begin{align} \begin{split} 
\sup_{y\in\R^d}
\frac{\left\lvert u^{d,K,\varepsilon}(t,y)-
u^{d,\varepsilon}(t,y)\right\rvert}{ e^{3.5c(T-t)} (d^c+\lVert y\rVert^2)^{\frac{1}{2}}}&\leq \xeqref{x13}
12c^{\frac{3}{2}}d^{\frac{c}{2}}(T+2)e^{16cT+5cT^2}\frac{T^\frac{1}{2}}{K^\frac{1}{2}}\cdot e^{cT}\\
&=12c^{\frac{3}{2}}d^{\frac{c}{2}}(T+2)e^{17cT+5cT^2}\frac{T^\frac{1}{2}}{K^\frac{1}{2}}
.\end{split}
\end{align}
Therefore, for all
 $d,K\in \N$, $t\in [0,T]$, $x\in\R^d$,
 $\varepsilon\in (0,1)$ 
we have that
\begin{align}
\left\lvert u^{d,K,\varepsilon}(t,x)-
u^{d,\varepsilon}(t,x)\right\rvert\leq 
12c^{\frac{3}{2}}d^{\frac{c}{2}}(T+2)e^{21cT+5cT^2}
(d^c+\lVert x\rVert^2)^\frac{1}{2}
\frac{T^\frac{1}{2}}{K^\frac{1}{2}}.
\end{align}
This shows \eqref{k94}. The proof of \cref{k32} is thus completed.
\end{proof}

In \cref{a31} below we approximate the solution to SFPE \eqref{FK u K epsilon},
 associated to \eqref{SDE X epsilon K}, by the MLP approximation \eqref{k34}.

\begin{proposition}\label{a31}
Assume Settings \ref{setting PIDE}, \ref{setting approx NN}, \ref{theta setting}.
For every $\varepsilon\in(0,1)$ let $f_\varepsilon\in C(\R,\R)$ be a function
which satisfies \eqref{k67} and \eqref{growth f epsilon}.
For every $ d,K\in\N $, $\varepsilon\in (0,1)$
let $u^{d,K,\varepsilon}\colon [0,T]\times \R^d\to\R$ 
be the measurable function introduced in \eqref{FK u K epsilon}
and let 
$U^{d,\theta,K,\varepsilon}_{n,m}\colon [0,T]\times\R^d\times \Omega\to \R$, 
$\theta\in \Theta$,  
$n,m\in \Z$, 
satisfy for all 
$\theta\in \Theta$,  
$n\in \N_0$, $m\in \N$, $t\in [0,T]$, $x\in \R^d$ that
\begin{align} \begin{split} &U^{d,\theta,K,\varepsilon}_{n,m}(t,x)=
\frac{\1_{\N}(n)}{m^n}\sum_{i=1}^{m^n}g^d_\varepsilon\Bigl( X^{ d,(\theta,0,-i), K,\varepsilon,t,x}_{T} \Bigr)\\
&\quad
+\sum_{\ell=0}^{n-1}
\frac{(T-t)}{m^{n-\ell}}
\sum_{i=1}^{m^{n-\ell}}\Bigl(
f_\varepsilon\circ
U^{d,(\theta,\ell,i),K,\varepsilon}_{\ell,m}
-\1_{\N}(\ell)
f_\varepsilon\circ
U^{d,(\theta,-\ell,i),K,\varepsilon}_{\ell-1,m}\Bigr)
\Bigl(\mathfrak{T}_t^{(\theta,\ell,i)}, 
 X^{ d,(\theta,\ell,i), K,\varepsilon,t,x}_{\mathfrak{T}_t^{(\theta,\ell,i)}} 
\Bigr).\end{split}\label{k34}
\end{align}
Then
for all
$d,K,n,m\in\N $, 
$\theta\in\Theta$,
$\varepsilon\in (0,1)$, $t\in[0,T]$, $x\in\R^d$ we have
that $U^{d,\theta,K,\varepsilon}_{n,m}$ is measurable and
$
\E\!\left[\left(
\left\lvert
U^{d,\theta,K,\varepsilon}_{n,m}(t,x)-u^{d,K,\varepsilon}(t,x)\right\rvert^2\right]\right)^\frac{1}{2}\leq 6
e^\frac{m}{2}m^{-\frac{n}{2}}e^{12 c Tn}
(cd^c T^{-1})^{\frac{1}{2}}\left(d^c+\lVert x\rVert^2\right)^\frac{1}{2}.
$
\end{proposition}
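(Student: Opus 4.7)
The plan is to follow the now-standard $L^2$ error analysis of MLP approximations (cf.\ \cite{hutzenthaler2019multilevel,HJKN2020a}), adapted to the jump setting at hand. Measurability of $U^{d,\theta,K,\varepsilon}_{n,m}$ is immediate from \eqref{k34} by induction on $n$, using joint measurability of $(t,x,\omega)\mapsto X^{d,\theta,K,\varepsilon,t,x}_s(\omega)$ and of the random times $\mathfrak{T}^\theta_t$. For the $L^2$ estimate, I would first derive an a priori pointwise bound $|u^{d,K,\varepsilon}(t,x)|\leq C_{c,T}(d^c+\|x\|^2)^{1/2}e^{C_{c,T}(T-t)}$ by exactly the Gr\"onwall-type argument used in the proof of \cref{x01} for $u^{d,\varepsilon}$ (cf.\ \eqref{x65}--\eqref{m93b}), now invoking the moment estimate \eqref{k69b} from \cref{k32} instead of \eqref{k22b}. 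Together with \eqref{Lip g epsilon}, \eqref{growth f epsilon}, and \eqref{k67} this guarantees that every summand in \eqref{k34} lies in $L^2(\P)$ and, via \eqref{FK u K epsilon} and the disintegration theorem combined with the fact that $\mathfrak{T}^{(\theta,\ell,i)}_t$ is uniformly distributed on $[t,T]$, that the MLP estimator is unbiased with respect to the Picard iterate, i.e.\ $\E[U^{d,\theta,K,\varepsilon}_{n,m}(t,x)]$ coincides with the $n$-th Picard iterate of \eqref{FK u K epsilon}.

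The core step is a bias--variance decomposition. Set $V_\ell:=U^{d,\theta,K,\varepsilon}_{\ell,m}$ and $u:=u^{d,K,\varepsilon}$ and write $V_{-1}:=0$. The key structural observation, made precise by induction on $n$ using the disjointness of the index sets $\{(\theta,0,-i)\}_i$, $\{(\theta,\ell,i)\}_i$, $\{(\theta,-\ell,i)\}_i$ and the independence assumptions in \cref{theta setting}, is that at each level $\ell$ the summands inside $\sum_{i=1}^{m^{n-\ell}}$ are conditionally i.i.d.\ and mean-zero after centering around the Picard update. Combining this with the Lipschitz property $|f_\varepsilon(w_1)-f_\varepsilon(w_2)|^2\leq c|w_1-w_2|^2$ from \eqref{k67} and with \eqref{Lip g epsilon}, the variance formula for i.i.d.\ sums yields a recursive estimate of the schematic form
\begin{equation*}
\E\bigl[|V_n(t,x)-u(t,x)|^2\bigr]\leq \frac{B(d,x)}{m^n}+\sum_{\ell=0}^{n-1}\frac{c(T-t)^2}{m^{n-\ell}}\,\sup_{(s,y)}\frac{\E[|V_\ell(s,y)-u(s,y)|^2]+\mathbbm{1}_\N(\ell)\E[|V_{\ell-1}(s,y)-u(s,y)|^2]}{(d^c+\|y\|^2)}\,(d^c+\|x\|^2),
\end{equation*}
where $B(d,x)\leq 4cd^cT^{-1}(d^c+\|x\|^2)e^{7cT}$ follows from \eqref{Lip g epsilon} and \eqref{k69b} applied to the terminal Monte Carlo term $g^d_\varepsilon(X^{d,(\theta,0,-1),K,\varepsilon,t,x}_T)$.

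Taking square roots, normalising by $(d^c+\|x\|^2)^{1/2}$ and performing induction on $n$ then delivers the claim. More concretely, denote $e_n:=\sup_{t,x}\{(d^c+\|x\|^2)^{-1/2}\|V_n(t,x)-u(t,x)\|_{L^2(\P)}\}$ and derive the scalar recursion
\begin{equation*}
e_n\leq \frac{\alpha}{m^{n/2}}+\sum_{\ell=0}^{n-1}\frac{(T-t)\sqrt{c}}{m^{(n-\ell)/2}}\bigl(e_\ell+\mathbbm{1}_\N(\ell)e_{\ell-1}\bigr)
\end{equation*}
with $\alpha\leq 2(cd^cT^{-1})^{1/2}e^{4cT}$. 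The standard induction argument (applied e.g.\ in \cite[Lemma~3.6]{hutzenthaler2019multilevel}), using $(1+m^{-1})^m\leq e$, combined with the growth factor $e^{12cTn}$ absorbing the accumulated $(1+2\sqrt{c}T)^n$ prefactors, yields the stated bound $e_n\leq 6e^{m/2}m^{-n/2}e^{12cTn}(cd^cT^{-1})^{1/2}$.

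The main obstacle is not the Lipschitz and moment bookkeeping (which proceeds exactly as in the PDE case once \cref{k32}\eqref{k69b} is in hand), but rather a careful verification of the conditional independence structure of the indexed family $(U^{d,\theta,K,\varepsilon}_{\ell,m})_\theta$: namely, that the centered summands at level $\ell$ are (i) independent across $i$, (ii) independent of the realisations of $X^{d,(\theta,0,i),K,\varepsilon,t,x}$ used to plug in the random evaluation point, and (iii) still allow the conditional expectation of $f_\varepsilon\circ V_\ell-\mathbbm{1}_\N(\ell)f_\varepsilon\circ V_{\ell-1}$ to reproduce, after averaging over $\mathfrak{T}^{(\theta,\ell,i)}_t$ and $X^{d,(\theta,0,i),K,\varepsilon,t,x}_{\mathfrak{T}^{(\theta,\ell,i)}_t}$, the Picard update associated with the SFPE \eqref{FK u K epsilon}. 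Once this probabilistic bookkeeping is set up, the remainder of the argument is a deterministic induction.
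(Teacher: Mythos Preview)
Your proposal is correct and follows essentially the same route as the paper: verify the growth bound on $u^{d,K,\varepsilon}$ via Gr\"onwall, the linear growth of $g^d_\varepsilon$ and $f_\varepsilon$, and the second-moment estimate for the Euler--Maruyama process from \cref{k32}\eqref{k69b}, and then run the standard MLP bias--variance recursion. The only difference is one of packaging: the paper does not re-derive the recursive inequality and its solution but instead, after establishing exactly the ingredients you list (see \eqref{k90b}--\eqref{x06b}), invokes \cite[Corollary~3.12]{HJKN2020} as a black box with the choices $\varphi(x)=(d^c+\|x\|^2)^{1/2}$, $f\gets f_\varepsilon$, $g\gets g^d_\varepsilon$, $Y^\theta\gets X^{d,\theta,K,\varepsilon}$, which directly yields the constant $6e^{m/2}m^{-n/2}e^{12cTn}(cd^cT^{-1})^{1/2}$; your sketch is effectively an outline of the proof of that corollary.
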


\begin{proof}[Proof of \cref{a31}]
For measurability see \cite[Lemma 3.2]{HJKN2020}.
Next, 
\eqref{Lip g epsilon}, \eqref{growth NN coe}, \eqref{k67},
\eqref{growth f epsilon},
and the triangle inequality 
show for all
$d\in \N$, $\varepsilon\in(0,1)$,
 $x\in \R^d$, $w\in\R$ that 
\begin{align} \begin{split} 
\left\lvert
g_\varepsilon^d(x)\right\rvert
\leq \left\lvert
g_\varepsilon^d(0)\right\rvert
+\xeqref{Lip g epsilon}(cd^cT^{-1})^{\frac{1}{2}}\lVert x\rVert
\leq \xeqref{growth NN coe}(cd^c T^{-1})^{\frac{1}{2}}
+(cd^c T^{-1})^{\frac{1}{2}}\lVert x\rVert
\leq 
2
(cd^c T^{-1})^{\frac{1}{2}} (d^c+\lVert x\rVert^2)^\frac{1}{2}\label{k90b}\end{split}
\end{align}
and 
\begin{align}
\lvert
f_\varepsilon(w)\rvert\leq\xeqref{k67} \lvert f_\varepsilon(0)\rvert +c^\frac{1}{2}\lvert w\rvert\leq \xeqref{growth f epsilon}(cd^c T^{-3})^{\frac{1}{2}}+
c^\frac{1}{2}\lvert w\rvert.\label{m99}
\end{align}
First, 
for all  $d,K\in \N$, $t\in [0,T]$, $s\in [t,T]$,
$x\in \R^d$, $\varepsilon\in (0,1)$  we have that
\begin{align}
\E \!\left[d^c+\left\lVert X^{d,0,K,\varepsilon,x}_{t,s}\right\rVert^2
\right]\leq (d^c+\lVert x\rVert^2)e^{7c(s-t)}
\label{k98}
\end{align}
(cf. \cref{k32}). This, \eqref{k90b}, and Jensen's inequality demonstrate for all 
$d,K\in \N$, $t\in [0,T]$
$x\in \R^d$, $\varepsilon\in (0,1)$ 
that
\begin{align} \begin{split} 
\E\!\left[g^d_\varepsilon(
X^{d,0,K,\varepsilon,t,x}_{T} )\right]
&\leq\xeqref{k90b} 2
(cd^c T^{-1})^{\frac{1}{2}}\E\!\left[ \left(d^c+\left\lVert X^{d,0,K,\varepsilon,t,x}_{T}\right\rVert^2\right)^\frac{1}{2}\right]\\&\leq 
2
(cd^c T^{-1})^{\frac{1}{2}}\left(\E\!\left[ d^c+\left\lVert X^{d,0,K,\varepsilon,t,x}_{T}\right\rVert^2\right]\right)^\frac{1}{2}\\
&\leq 
2
(cd^c T^{-1})^{\frac{1}{2}}\left( \xeqref{k98}(d^c+\lVert x\rVert^2)e^{7cT} \right)^\frac{1}{2}\\&=
2
(cd^c T^{-1})^{\frac{1}{2}}(d^c+\lVert x\rVert^2)^\frac{1}{2}e^{3.5cT}.
\end{split}\label{b01}\end{align}
Thus, \eqref{FK u K epsilon}, \eqref{m99}, the fact that
$c\geq 1$, Jensen's inequality, \eqref{k98}
 show for all 
$d,K\in \N$, $t\in [0,T]$
$x\in \R^d$, $\varepsilon\in (0,1)$ 
that
\begin{align} \begin{split} 
&
\left\lvert u^{d,K,\varepsilon}(t,x)\right\rvert\\&\leq \xeqref{FK u K epsilon}
\E\!\left[\left\lvert g^d_\varepsilon(
X^{d,0,K,\varepsilon,t,x}_{T} )\right\rvert\right]
+\int_{t}^{T}
\E\!\left[\left\lvert f_\varepsilon(u^{d,K,\varepsilon}(r,
X^{d,0,K,\varepsilon,t,x}_{r} ))\right\rvert\right]dr\\
&\leq \xeqref{b01}
2
(cd^c T^{-1})^{\frac{1}{2}}(d^c+\lVert x\rVert^2)^\frac{1}{2}e^{3.5cT}+\int_{t}^{T}\left(\xeqref{m99}(cd^cT^{-3})^\frac{1}{2}+c^\frac{1}{2}
\E\!\left[\left\lvert
u^{d,K,\varepsilon}(r,
X^{d,0,K,\varepsilon,t,x}_{r} )\right\rvert\right]\right)dr\\
&\leq 
3
(cd^c T^{-1})^{\frac{1}{2}}(d^c+\lVert x\rVert^2)^\frac{1}{2}e^{3.5cT}\\&\quad +\int_{t}^{T}c
\left[ \sup_{y\in\R^d}\frac{\left\lvert u^{d,K,\varepsilon}(r,y)\right\rvert}{e^{3.5c(T-r)}(d^c+\lVert y\rVert^2)^\frac{1}{2}}\right]
e^{3.5c(T-r)}
\E\!\left[\left(d^c+\left\lVert 
X^{d,0,K,\varepsilon,t,x}_{r}\right\rVert^2\right)^\frac{1}{2}
\right]dr\\
&\leq 
3
(cd^c T^{-1})^{\frac{1}{2}}(d^c+\lVert x\rVert^2)^\frac{1}{2}e^{3.5cT}\\&\quad+\int_{t}^{T}c
\left[ \sup_{y\in\R^d}\frac{\left\lvert u^{d,K,\varepsilon}(r,y)\right\rvert}{e^{3.5c(T-r)}(d^c+\lVert y\rVert^2)^\frac{1}{2}}\right]
e^{3.5c(T-r)}
\left(
\E\!\left[d^c+\left\lVert 
X^{d,0,K,\varepsilon,t,x}_{r}\right\rVert^2
\right]\right)^\frac{1}{2}dr\\
&\leq 
3
(cd^c T^{-1})^{\frac{1}{2}}(d^c+\lVert x\rVert^2)^\frac{1}{2}e^{3.5cT}\\&\quad+\int_{t}^{T}c
\left[ \sup_{y\in\R^d}\frac{\left\lvert u^{d,K,\varepsilon}(r,y)\right\rvert}{e^{3.5c(T-r)}(d^c+\lVert y\rVert^2)^\frac{1}{2}}\right]
e^{3.5c(T-r)}
\left(\xeqref{k98} (d^c+\lVert x\rVert^2)e^{7c(r-t)} \right)^\frac{1}{2}dr\\
&=3
(cd^c T^{-1})^{\frac{1}{2}}(d^c+\lVert x\rVert^2)^\frac{1}{2}e^{3.5cT}
\\&\quad+
\int_{t}^{T}c
\left[ \sup_{y\in\R^d}\frac{\left\lvert u^{d,K,\varepsilon}(r,y)\right\rvert}{e^{3.5c(T-r)}(d^c+\lVert y\rVert^2)^\frac{1}{2}}\right]
e^{3.5c(T-t)}
 (d^c+\lVert x\rVert^2)^\frac{1}{2}\,dr.
\end{split}\end{align}
 Dividing by $e^{3.5c(T-t)}
 (d^c+\lVert x\rVert^2)^\frac{1}{2}$ we then obtain that
  for all 
$d,K\in \N$, $t\in [0,T]$,
$\varepsilon\in (0,1)$ we have
that
\begin{align}
 \sup_{y\in\R^d}\frac{\left\lvert u^{d,K,\varepsilon}(t,y)\right\rvert}{e^{3.5c(T-r)}(d^c+\lVert y\rVert^2)^\frac{1}{2}}\leq 
3
(cd^c T^{-1})^{\frac{1}{2}}e^{3.5cT}
+
\int_{t}^{T}c \sup_{y\in\R^d}\frac{\left\lvert u^{d,K,\varepsilon}(r,y)\right\rvert}{e^{3.5c(T-r)}(d^c+\lVert y\rVert^2)^\frac{1}{2}}\,dr.
\label{x24}
\end{align}
Hence, \eqref{growth u} and Gr\"onwall's inequality imply
for all 
$d,K\in \N$, $t\in [0,T]$
$\varepsilon\in (0,1)$ that
\begin{align}
 \sup_{y\in\R^d}\frac{\left\lvert u^{d,K,\varepsilon}(t,y)\right\rvert}{e^{3.5c(T-r)}(d^c+\lVert y\rVert^2)^\frac{1}{2}}\leq \xeqref{x24}3
(cd^c T^{-1})^{\frac{1}{2}}e^{3.5cT} e^{cT}.
\end{align}
This shows 
for all 
$d,K\in \N$, $t\in [0,T]$, $x\in\R^d$,
$\varepsilon\in (0,1)$
that
\begin{align}\left\lvert u^{d,K,\varepsilon}(t,x)\right\rvert\leq 3
(cd^c T^{-1})^{\frac{1}{2}}e^{8cT}(d^c+\lVert x\rVert^2)^\frac{1}{2}.\label{k04b}
\end{align}
Next, 
\eqref{growth f epsilon} and \eqref{k90b} demonstrate 
for all $d\in \N$,  $x\in\R^d$,
$\varepsilon\in (0,1)$
that
\begin{align}
\frac{T\lvert f_\varepsilon(0)\rvert+ \lvert g^d_\varepsilon(x) \rvert}{(d^c+\lVert x\rVert^2)^\frac{1}{2}}
\leq 
\frac{T\cdot\xeqref{growth f epsilon} T^{-\frac{3}{2}}c^{\frac{1}{2}}d^\frac{c}{2}+ \xeqref{k90b}2
(cd^c T^{-1})^{\frac{1}{2}} (d^c+\lVert x\rVert^2)^\frac{1}{2}}{(d^c+\lVert x\rVert^2)^\frac{1}{2}}\leq 3T^{-\frac{1}{2}}
(cd^c)^\frac{1}{2}\label{x06b}
\end{align}
Hence,  \cite[Corollary 3.12]{HJKN2020}
(applied for 
all $d,K\in\N $, $\varepsilon\in (0,1)$
with 
$f\gets f_\varepsilon$,
$g\gets g^d_\varepsilon$,
$\varphi\gets ( \R^d \ni x\mapsto (d^c+\lVert x\rVert^2)^\frac{1}{2})\in (0,\infty)$,
$
(
Y^\theta_{\cdot,\cdot}(\cdot))_{\theta\in\Theta}\gets
( X^{d,\theta,K,\varepsilon,\cdot}_{\cdot,\cdot})_{\theta\in\Theta}
$, $(U^\theta_{n,m})_{\theta\in\Theta,n,m\in\Z}\gets (U^{d,\theta, K,\varepsilon}_{n,m})_{\theta\in\Theta,n,m\in\Z}  $ in the notation of \cite[Corollary 3.12]{HJKN2020}), \eqref{k98}, and \eqref{k04b} 
show
for all
$d,K,n,m\in\N $, $\varepsilon\in (0,1)$, $\theta\in\Theta$
that
\begin{align} \begin{split} 
&
\sup_{t\in [0,T],x\in \R^d}
\frac{\E\!\left[\left(
\left\lvert
U_{n,m}^{d,\theta,K,\varepsilon}(t,x)-u^{d,K,\varepsilon}(t,x)\right\rvert^2\right]\right)^\frac{1}{2}}{\left(d^c+\lVert x\rVert^2\right)^\frac{1}{2}}
\\
&\leq 2
e^\frac{m}{2}m^{-\frac{n}{2}}(1+2Tc)^{N-1}e^{3.5c T}\left(
\sup_{x\in\R^d}
\frac{T\lvert f_\varepsilon(0)\rvert+ \lvert g^d_\varepsilon(x) \rvert}{(d^c+\lVert x\rVert^2)^\frac{1}{2}}+Tc \sup_{t\in[0,T],x\in\R^d} \frac{\left\lvert u^{d,K,\varepsilon}(t,x)\right\rvert}{(d^c+\lVert x\rVert^2)^\frac{1}{2}}
\right)\\
&\leq 2
e^\frac{m}{2}m^{-\frac{n}{2}}(e^{2cT})^{N-1}e^{3.5c T}\left( \xeqref{x06b}3T^{-\frac{1}{2}}
(cd^c)^\frac{1}{2}+Tc\cdot \xeqref{k04b}3
(cd^c T^{-1})^{\frac{1}{2}}e^{8cT}
\right)\\
&\leq 
2
e^\frac{m}{2}m^{-\frac{n}{2}}(e^{2cT})^{N-1}e^{3.5c T}(1+Tc)\cdot 3
(cd^c T^{-1})^{\frac{1}{2}}e^{8cT}\\
&\leq 
6
e^\frac{m}{2}m^{-\frac{n}{2}}(e^{2cT})^{N-1}e^{12 c T}
(cd^c T^{-1})^{\frac{1}{2}}\\&\leq 
6
e^\frac{m}{2}m^{-\frac{n}{2}}e^{12 c Tn}
(cd^c T^{-1})^{\frac{1}{2}}.
\end{split}\end{align}
This implies 
for all $\theta\in\Theta$,
$d,K,n,m\in\N $, $\varepsilon\in (0,1)$, $t\in[0,T]$, $x\in\R^d$
that
\begin{align}
\E\!\left[\left(
\left\lvert
U_{n,m}^{d,\theta,K,\varepsilon}(t,x)-u^{d,K,\varepsilon}(t,x)\right\rvert^2\right]\right)^\frac{1}{2}\leq 6
e^\frac{m}{2}m^{-\frac{n}{2}}e^{12 c Tn}
(cd^c T^{-1})^{\frac{1}{2}}\left(d^c+\lVert x\rVert^2\right)^\frac{1}{2}
.
\end{align}
This completes the proof of \cref{a31}.
\end{proof}

\section{DNNs}\label{s04}

\subsection{Properties of operations associated to DNNs} In \cref{m07b} below we introduce  operations which are important for constructing the random DNN that represents the MLP approximations
in the proof of \cref{k04}.
\begin{setting}\label{m07b}
Assume \cref{m07}.
Let $\mathfrak{n}_n^d
\in \mathbf{D} $, $n\in [3,\infty)\cap\Z$, $d\in \N$,  satisfy for all $n\in [3,\infty)\cap\N$, $d\in \N$ that
\begin{align}
\label{k07}
\mathfrak{n}_n^d= (d,\underbrace{2d,\ldots,2d}_{(n-2)\text{ times}},d)\in \N^n
.
\end{align} 
 Let $\mathfrak{n}_n\in \mathbf{D}$, $n\in [3,\infty)$, satisfy for all
$n\in [3,\infty)$ that $\mathfrak{n}_n=\mathfrak{n}^1_n$.
Let $\boxplus \colon \mathbf{D}\times \mathbf{D} \to\mathbf{D}  $ satisfy
for all $H\in \N$, 
$\alpha= (\alpha_0,\alpha_1,\ldots,\alpha_{H},\alpha_{H+1})\in \N^{H+2}$,
$\beta= (\beta_0,\beta_1,\beta_2,\ldots,\beta_{H},\beta_{H+1})\in \N^{H+2}$
that
$
\alpha \boxplus \beta =(\alpha_0,\alpha_1+\beta_1,\ldots,\alpha_{H}+\beta_{H},\beta_{H+1})\in \N^{H+2}.
$
Let $\odot \colon \mathbf{D}\times \mathbf{D} \to\mathbf{D} $ satisfy 
for all $H_1,H_2\in \N$, $ \alpha=(\alpha_0,\alpha_1,\ldots,\alpha_{H_1},\alpha_{H_1+1})\in\N^{H_1+2}$, $\beta=(\beta_0,\beta_1,\ldots,\beta_{H_2},\beta_{H_2+1})\in\N^{H_2+2}$
that
$
\alpha\odot \beta= (\beta_{0},\beta_{1},\ldots,\beta_{H_2},\beta_{H_2+1}+\alpha_{0},\alpha_{1},\alpha_{2},\ldots,\alpha_{H_1+1})\in \N^{H_1+H_2+3}.
$
\end{setting}

To prove our main result in this section presented in \cref{k04} we employ several results presented in Lemmas \ref{k43}--\ref{b04}, which are basic facts on DNNs. The proof of Lemmas \ref{k43}--\ref{b02} can be found in 
\cite{CHW2022,HJKN2020a} and therefore omitted.

\begin{lemma}[$\odot$ is associative--{\cite[Lemma 3.3]{HJKN2020a}}]\label{k43}Assume \cref{m07b} and let $\alpha,\beta,\gamma\in \bfD$. Then we have that
$(\alpha\odot\beta)\odot \gamma
= \alpha\odot(\beta\odot \gamma)$.
\end{lemma}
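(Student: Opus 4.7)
The plan is to prove \cref{k43} by direct computation, simply unfolding the definition of $\odot$ on both sides and checking equality of tuples. Since the operation is purely combinatorial on elements of $\bfD$, no analytic or probabilistic arguments enter; the only real task is careful bookkeeping of indices.

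I would fix $H_1, H_2, H_3 \in \N$ and write
$\alpha = (\alpha_0, \alpha_1, \ldots, \alpha_{H_1+1}) \in \N^{H_1+2}$,
$\beta = (\beta_0, \beta_1, \ldots, \beta_{H_2+1}) \in \N^{H_2+2}$, and
$\gamma = (\gamma_0, \gamma_1, \ldots, \gamma_{H_3+1}) \in \N^{H_3+2}$. By the defining formula for $\odot$ in \cref{m07b},
\begin{align*}
\alpha \odot \beta = (\beta_0, \beta_1, \ldots, \beta_{H_2}, \beta_{H_2+1} + \alpha_0, \alpha_1, \ldots, \alpha_{H_1+1}) \in \N^{H_1+H_2+3},
\end{align*}
whose zeroth entry is $\beta_0$ and whose last entry is $\alpha_{H_1+1}$. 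Applying $\odot \gamma$ to this tuple (noting that $H_1+H_2+3 = (H_1+H_2+1)+2$ so it is treated as an element of $\N^{(H_1+H_2+1)+2}$) gives
\begin{align*}
(\alpha \odot \beta) \odot \gamma = (\gamma_0, \ldots, \gamma_{H_3}, \gamma_{H_3+1} + \beta_0, \beta_1, \ldots, \beta_{H_2}, \beta_{H_2+1} + \alpha_0, \alpha_1, \ldots, \alpha_{H_1+1}).
\end{align*}

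Symmetrically, I would compute $\beta \odot \gamma = (\gamma_0, \ldots, \gamma_{H_3}, \gamma_{H_3+1} + \beta_0, \beta_1, \ldots, \beta_{H_2+1})$, whose last entry is $\beta_{H_2+1}$. Applying $\alpha \odot (\cdot)$ then prepends this tuple and adds $\alpha_0$ to its final entry, yielding
\begin{align*}
\alpha \odot (\beta \odot \gamma) = (\gamma_0, \ldots, \gamma_{H_3}, \gamma_{H_3+1} + \beta_0, \beta_1, \ldots, \beta_{H_2}, \beta_{H_2+1} + \alpha_0, \alpha_1, \ldots, \alpha_{H_1+1}).
\end{align*}
Comparing the two displays entry-by-entry establishes the claimed equality.

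There is no real obstacle here; the only mild subtlety is making sure the "merge points" $\gamma_{H_3+1} + \beta_0$ and $\beta_{H_2+1} + \alpha_0$ land in the correct positions on both sides. Since \cref{k43} is explicitly attributed to \cite[Lemma 3.3]{HJKN2020a}, I would keep the writeup at the level above (or just cite the reference) rather than expanding further.
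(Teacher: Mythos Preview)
Your proof is correct and is exactly the direct unfolding of the definition that the cited reference \cite[Lemma~3.3]{HJKN2020a} carries out; the paper itself omits the proof entirely and simply defers to that citation, so there is nothing further to compare.
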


\begin{lemma}[$\boxplus$ and associativity--{\cite[Lemma 3.4]{HJKN2020a}}]Assume \cref{m07b},
let $H,k,l \in \N$, and let $\alpha,\beta,\gamma\in \left( \{k\}\times \N^{H} \times \{l\}\right)$.
Then
\begin{enumerate}[(i)]
\item we have that $\alpha\boxplus\beta\in \left(\{k\}\times \N^{H} \times \{l\}\right)$,
\item we have that $\beta\boxplus \gamma\in \left(\{k\}\times \N^{H} \times \{l\}\right)$, and 
\item we have that $(\alpha\boxplus\beta)\boxplus \gamma
= \alpha\boxplus(\beta\boxplus \gamma)$.
\end{enumerate}
\end{lemma}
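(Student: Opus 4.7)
The plan is to unwind the definition of $\boxplus$ given in \cref{m07b} and reduce everything to coordinatewise addition in $\N$, where associativity is well-known.

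First I would fix the notation: since $\alpha,\beta,\gamma\in\{k\}\times\N^{H}\times\{l\}$, I write $\alpha=(k,\alpha_1,\ldots,\alpha_H,l)$, $\beta=(k,\beta_1,\ldots,\beta_H,l)$, $\gamma=(k,\gamma_1,\ldots,\gamma_H,l)$. Next, applying the definition of $\boxplus$ in \cref{m07b} (taking $H$ there to equal our $H$, so the vectors are indexed $0,1,\ldots,H+1$ and live in $\N^{H+2}$), I obtain directly that
\begin{equation*}
\alpha\boxplus\beta=(k,\alpha_1+\beta_1,\ldots,\alpha_H+\beta_H,l)\in\{k\}\times\N^{H}\times\{l\},
\end{equation*}
which yields (i); the same computation with the roles of $\alpha$ and $\beta$ replaced by $\beta$ and $\gamma$ gives (ii).

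For (iii) I would compute both iterated expressions and compare them coordinate by coordinate. On the one hand, using (i) and a second application of the definition of $\boxplus$,
\begin{equation*}
(\alpha\boxplus\beta)\boxplus\gamma
=(k,(\alpha_1+\beta_1)+\gamma_1,\ldots,(\alpha_H+\beta_H)+\gamma_H,l).
\end{equation*}
On the other hand, using (ii) and a second application of the definition,
\begin{equation*}
\alpha\boxplus(\beta\boxplus\gamma)
=(k,\alpha_1+(\beta_1+\gamma_1),\ldots,\alpha_H+(\beta_H+\gamma_H),l).
\end{equation*}
Associativity of addition in $\N$ then equates the two vectors coordinatewise, establishing (iii).

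There is no real obstacle here; the statement is a bookkeeping lemma whose content is entirely the observation that the ``end coordinates'' $k$ and $l$ are preserved by $\boxplus$ (because it picks the first coordinate from the left operand and the last from the right operand, which happen to agree here) and that the inner coordinates are added, so the associativity of $\boxplus$ on the class $\{k\}\times\N^{H}\times\{l\}$ reduces to the associativity of $+$ on $\N$. The only mild care needed is to make sure the index ranges line up with the definition in \cref{m07b}, i.e., that both operands have the same ``depth'' $H+2$, which is guaranteed by the hypothesis $\alpha,\beta,\gamma\in\{k\}\times\N^H\times\{l\}$.
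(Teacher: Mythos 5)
Your proof is correct: unwinding the definition of $\boxplus$ shows the first coordinate is taken from the left operand and the last from the right operand (both equal to $k$ and $l$ here), the inner coordinates are added, and associativity reduces to that of addition in $\N$. The paper omits the proof and defers to the cited reference, whose argument is exactly this direct coordinatewise computation, so your approach matches.
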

 \begin{lemma}[Triangle inequality--{\cite[Lemma 3.5]{HJKN2020a}}]\label{b15}
Assume \cref{m07b},
let $k,l,H \in \N$, $\alpha,\beta\in \{k\}\times \N^{H} \times \{l\}$.
Then we have that
$\supnorm{\alpha\boxplus\beta}\leq\supnorm{\alpha}+
\supnorm{\beta} $.
\end{lemma}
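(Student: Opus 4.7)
The plan is to unpack the definitions of $\boxplus$ and $\supnorm{\cdot}$ and read off the bound componentwise. Since $\alpha,\beta\in\{k\}\times\N^{H}\times\{l\}$, I can write $\alpha=(k,\alpha_1,\ldots,\alpha_H,l)$ and $\beta=(k,\beta_1,\ldots,\beta_H,l)$. Then from the definition of $\boxplus$ in \cref{m07b} we have $\alpha\boxplus\beta=(k,\alpha_1+\beta_1,\ldots,\alpha_H+\beta_H,l)$, and the max-norm $\supnorm{\cdot}$ from \cref{n01} gives
\begin{align*}
\supnorm{\alpha\boxplus\beta}=\max\!\Bigl\{k,\,\max_{i\in[1,H]\cap\N}(\alpha_i+\beta_i),\,l\Bigr\}.
\end{align*}

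Next, I would verify each of the three candidates for this maximum is bounded by $\supnorm{\alpha}+\supnorm{\beta}$. For any $i\in[1,H]\cap\N$ we have $\alpha_i\leq\supnorm{\alpha}$ and $\beta_i\leq\supnorm{\beta}$, so $\alpha_i+\beta_i\leq\supnorm{\alpha}+\supnorm{\beta}$. Moreover $k=\alpha_0\leq\supnorm{\alpha}\leq\supnorm{\alpha}+\supnorm{\beta}$ (using $\supnorm{\beta}\geq 0$), and similarly $l=\alpha_{H+1}\leq\supnorm{\alpha}\leq\supnorm{\alpha}+\supnorm{\beta}$. Taking the maximum over these bounds yields $\supnorm{\alpha\boxplus\beta}\leq\supnorm{\alpha}+\supnorm{\beta}$, which is the desired inequality.

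There is essentially no obstacle here: the statement is a one-line consequence of the definitions, and the only care needed is to handle the boundary coordinates $\alpha_0,\alpha_{H+1}$ separately from the middle coordinates, since $\boxplus$ does \emph{not} add these but rather keeps $\alpha_0$ and $\beta_{H+1}$ (which agree with $k,l$ respectively under our hypothesis). The whole argument should fit in a few lines.
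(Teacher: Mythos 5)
Your proof is correct, and the paper itself gives no proof of this lemma -- it is stated as a known fact and deferred to \cite[Lemma 3.5]{HJKN2020a}; your componentwise unpacking of the definitions of $\boxplus$ and $\supnorm{\cdot}$ is exactly the standard argument, including the correct observation that the boundary entries of $\alpha\boxplus\beta$ are $\alpha_0=k$ and $\beta_{H+1}=l$ rather than sums, which is where the hypothesis $\alpha,\beta\in\{k\}\times\N^{H}\times\{l\}$ is used.
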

\begin{lemma}[DNNs for affine transformations--{\cite[Lemma 3.7]{HJKN2020a}}]\label{p01}
Assume \cref{m07} and let $d,m\in \N$, 
$\lambda\in \R$,
$b\in\R^d$, $a\in\R^m$, $\Psi\in\mathbf{N}$ satisfy that $\mathcal{R}(\Psi)\in C(\R^d,\R^m)$. Then we have that
$
\lambda\left((\mathcal{R}(\Psi))(\cdot +b)+a\right)\in \mathcal{R}(\{\Phi\in\mathbf{N}\colon \mathcal{D}(\Phi)=\mathcal{D}(\Psi)\}).
$
\end{lemma}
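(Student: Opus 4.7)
The plan is to construct the required network $\Phi$ explicitly by modifying only the first and last parameter pairs of $\Psi$, leaving all intermediate layers unchanged. This is the standard ``absorbing'' trick: an input shift $b$ can be folded into the first bias vector via the first weight matrix, while an output shift $a$ and a scalar multiplication $\lambda$ can be folded into the last weight matrix and bias vector.

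More precisely, write $H\in\N$ and $k_0,k_1,\ldots,k_{H+1}\in\N$ such that
$$
\Psi = ((W_1,B_1),(W_2,B_2),\ldots,(W_{H+1},B_{H+1}))
\in \prod_{n=1}^{H+1}\bigl(\R^{k_n\times k_{n-1}}\times\R^{k_n}\bigr),
$$
with $k_0=d$ and $k_{H+1}=m$. I would define
$$
\Phi := \bigl((W_1, \, W_1 b + B_1),\,(W_2,B_2),\,\ldots,\,(W_H,B_H),\,(\lambda W_{H+1},\, \lambda B_{H+1} + \lambda a)\bigr).
$$
Since $W_1 b + B_1\in\R^{k_1}$ and $\lambda B_{H+1}+\lambda a\in\R^{k_{H+1}}=\R^m$, the object $\Phi$ lies in $\mathbf{N}$ and clearly satisfies $\calD(\Phi)=\calD(\Psi)$.

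It then remains to check that $\calR(\Phi)$ equals $\lambda\bigl((\calR(\Psi))(\cdot+b)+a\bigr)$. For this, let $x_0\in\R^d$ and define $y_0=x_0$ together with $y_n=\mathbf{A}_{k_n}(W_n' y_{n-1}+B_n')$ for $n\in[1,H]\cap\N$, where $(W_n',B_n')$ denote the parameters of $\Phi$. By construction the first-layer preactivation is $W_1 y_0 + W_1 b + B_1 = W_1(x_0+b)+B_1$, so $y_1$ coincides with the first hidden activation of $\Psi$ evaluated at $x_0+b$. Since $(W_n',B_n')=(W_n,B_n)$ for $n\in[2,H]\cap\N$, an immediate induction shows $y_n = x_n|_{x_0\leftarrow x_0+b}$ for every $n\in[1,H]\cap\N$, where $(x_n)$ denotes the hidden activations of $\Psi$. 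Finally, the output of $\Phi$ is $\lambda W_{H+1} y_H + \lambda B_{H+1} + \lambda a = \lambda\bigl(W_{H+1}y_H + B_{H+1} + a\bigr) = \lambda\bigl((\calR(\Psi))(x_0+b)+a\bigr)$, which establishes the claim.

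There is essentially no hard step here; the proof is bookkeeping. The only mild care needed is to make sure that the dimensions of the modified bias $W_1 b + B_1$ and of the modified output $\lambda B_{H+1}+\lambda a$ match the layer widths prescribed by $\calD(\Psi)$, so that $\Phi$ really belongs to $\mathbf{N}$ and in fact to $\{\Phi\in\mathbf{N}\colon \calD(\Phi)=\calD(\Psi)\}$; this is immediate from the definitions in \cref{m07}.
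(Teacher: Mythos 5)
Your construction is correct: absorbing the input shift $b$ into the first bias via $W_1b+B_1$ and the scaling $\lambda$ and output shift $a$ into the final affine map is exactly the standard argument behind the cited result (\cite[Lemma 3.7]{HJKN2020a}), whose proof the paper omits. The dimension bookkeeping and the layer-by-layer identification of activations are all in order, so nothing is missing.
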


\begin{lemma}[Composition of functions generated by DNNs--{\cite[Lemma 3.8]{HJKN2020a}}]\label{m11b}
Assume \cref{m07b} and let $d_1,d_2,d_3\in\N$, $f_1\in C(\R^{d_2},\R^{d_3})$, $f_2\in C(  \R^{d_1}, \R^{d_2}) $, 
$\alpha,\beta\in \mathbf{D}$ satisfy both that
$f_1\in \mathcal{R}(\{\Phi\in \mathbf{N}\colon \mathcal{D}(\Phi)=\alpha\})$
as well as
$f_2\in \mathcal{R}(\{\Phi\in \mathbf{N}\colon \mathcal{D}(\Phi)=\beta\})$.
Then we have
that $(f_1\circ f_2)\in \mathcal{R}(\{\Phi\in \mathbf{N}\colon \mathcal{D}(\Phi)=\alpha\odot\beta\})$.
\end{lemma}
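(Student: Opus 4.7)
The plan is to construct an explicit DNN $\Phi \in \mathbf{N}$ with $\mathcal{D}(\Phi) = \alpha \odot \beta$ realising $f_1 \circ f_2$, by stitching together given DNN representations of $f_1$ and $f_2$ via the ReLU-identity trick. The key difficulty is that the output of a DNN for $f_2$ is produced by a pure affine map (no activation), while the input side of a DNN for $f_1$ expects a raw vector that will then be hit by an affine map followed by ReLU; a naive concatenation would require $d_2$ coordinates to pass transparently through a ReLU layer of width $\beta_{H_2+1}+\alpha_0 = 2 d_2$, and this transparency has to be engineered.

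First, I would pick DNNs $\Phi_1 = ((W^1_i, B^1_i))_{i=1}^{H_1+1}$ and $\Phi_2 = ((W^2_i, B^2_i))_{i=1}^{H_2+1}$ realising $f_1$ and $f_2$ with layer vectors $\alpha = (\alpha_0,\ldots,\alpha_{H_1+1})$ and $\beta = (\beta_0,\ldots,\beta_{H_2+1})$. Well-definedness of the composition forces $\alpha_0 = d_2 = \beta_{H_2+1}$, so the offending transition-layer width is indeed $2d_2$.

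Next, I would define $\Phi = ((W_i, B_i))_{i=1}^{H_1+H_2+2}$ as follows: copy the first $H_2$ layers of $\Phi_2$ verbatim, i.e. $(W_i, B_i) := (W^2_i, B^2_i)$ for $i \in \{1,\ldots,H_2\}$; at the transition layer set
\begin{align*}
W_{H_2+1} := \begin{pmatrix} W^2_{H_2+1} \\ -W^2_{H_2+1} \end{pmatrix}, \qquad B_{H_2+1} := \begin{pmatrix} B^2_{H_2+1} \\ -B^2_{H_2+1} \end{pmatrix};
\end{align*}
at the subsequent layer set $W_{H_2+2} := (W^1_1 \mid -W^1_1)$ and $B_{H_2+2} := B^1_1$; and finally copy the remaining layers $2,\ldots,H_1+1$ of $\Phi_1$ verbatim. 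A short index check shows $\mathcal{D}(\Phi) = \alpha \odot \beta$ and $\mathcal{P}(\Phi) < \infty$.

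To verify $\mathcal{R}(\Phi) = f_1 \circ f_2$, I would propagate a generic $x \in \R^{d_1}$ through $\Phi$: the first $H_2$ layers produce the same post-activation $y_{H_2}$ as in $\Phi_2$, so $W^2_{H_2+1} y_{H_2} + B^2_{H_2+1} = f_2(x)$; the transition layer then yields $\mathbf{A}_{2d_2}((f_2(x), -f_2(x))^{\top}) = (f_2(x)^{+}, f_2(x)^{-})^{\top}$ where $v^{\pm}$ denote componentwise positive/negative parts; the next layer outputs
\begin{align*}
\mathbf{A}_{\alpha_1}\!\bigl(W^1_1(f_2(x)^{+} - f_2(x)^{-}) + B^1_1\bigr) = \mathbf{A}_{\alpha_1}\!\bigl(W^1_1 f_2(x) + B^1_1\bigr),
\end{align*}
which is precisely the first hidden activation of $\Phi_1$ on input $f_2(x)$; the remaining layers then unfold exactly as in $\Phi_1$, giving $f_1(f_2(x))$. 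The only nontrivial algebraic input is the pointwise ReLU identity $v = \mathbf{A}_{d_2}(v) - \mathbf{A}_{d_2}(-v)$ combined with the block structure chosen above; the rest is indexing. Hence $\Phi$ witnesses $f_1 \circ f_2 \in \mathcal{R}(\{\Phi \in \mathbf{N}\colon \mathcal{D}(\Phi) = \alpha \odot \beta\})$.
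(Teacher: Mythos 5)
Your construction is correct and is exactly the standard argument behind the cited reference \cite[Lemma 3.8]{HJKN2020a}, whose proof the paper omits: concatenate the two parametrizations and use the identity $v=\mathbf{A}_{d_2}(v)-\mathbf{A}_{d_2}(-v)$ to push the affine output of $\Phi_2$ through the width-$2d_2$ transition layer prescribed by $\alpha\odot\beta$. The dimension bookkeeping matches the definition of $\odot$, so nothing is missing.
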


\begin{lemma}[Sum of DNNs of the same length--{\cite[Lemma 3.9]{HJKN2020a}}]
\label{b01b}
Assume \cref{m07b} and let $p,q,M,H\in \N$,  $\alpha_1,\alpha_2,\ldots,\alpha_M\in\R$,
 $k_i\in \mathbf{D} $,
$g_i\in C(\R^{p},\R^{q})$,
$i\in [1,M]\cap\N$, satisfy 
for all $i\in [1,M]\cap\N$
that $ \dim(k_i)=H+2$ and
$g_i\in 
\mathcal{R}(\{\Phi\in\mathbf{N}\colon \mathcal{D}(\Phi)=k_i\}).
$
Then
we have that 
$
\sum_{i=1}^{M}\alpha_i g_i
\in\mathcal{R}\left(\left\{ \Phi\in\mathbf{N}\colon
\mathcal{D}(\Phi)=\boxplus_{i=1}^Mk_i\right\}\right).
$
\end{lemma}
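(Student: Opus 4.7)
The plan is to build from DNN realizations of the individual $g_i$ a single DNN $\Psi$ whose architecture equals $\boxplus_{i=1}^{M}k_i$ and whose realization equals $\sum_{i=1}^{M}\alpha_i g_i$. The idea is to run the $M$ subnetworks in parallel on the common input, storing their hidden activations in disjoint coordinate blocks, and to combine their outputs linearly in the final affine layer.

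More precisely, for each $i\in[1,M]\cap\N$ I would fix $\Phi_i=((W_1^{(i)},B_1^{(i)}),\ldots,(W_{H+1}^{(i)},B_{H+1}^{(i)}))\in\mathbf{N}$ with $\mathcal{D}(\Phi_i)=k_i=(p,k_{i,1},\ldots,k_{i,H},q)$ and $\mathcal{R}(\Phi_i)=g_i$, and then define $\Psi\in\mathbf{N}$ layer by layer as follows. In the first layer, set $W_1$ to be the vertical concatenation of $W_1^{(1)},\ldots,W_1^{(M)}$ and $B_1$ the vertical concatenation of $B_1^{(1)},\ldots,B_1^{(M)}$, so that all $M$ subnetworks receive the common input. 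For each $n\in[2,H]\cap\N$, take $W_n=\mathrm{diag}(W_n^{(1)},\ldots,W_n^{(M)})$ with $B_n$ the vertical concatenation of $B_n^{(1)},\ldots,B_n^{(M)}$, so that the $M$ subnetworks propagate independently in disjoint blocks. In the output layer, take $W_{H+1}$ to be the horizontal concatenation of $\alpha_1 W_{H+1}^{(1)},\ldots,\alpha_M W_{H+1}^{(M)}$ and $B_{H+1}=\sum_{i=1}^{M}\alpha_i B_{H+1}^{(i)}$.

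A direct computation would then finish the proof. The key observation is that $\mathbf{A}_n$ acts componentwise, so at every hidden layer of $\Psi$ the activations decompose exactly into the concatenation of the activations of the individual $\Phi_i$ applied to the same input $x_0$. Consequently the output of $\Psi$ at $x_0$ is $W_{H+1}x_H+B_{H+1}=\sum_{i=1}^{M}\alpha_i(W_{H+1}^{(i)}x_H^{(i)}+B_{H+1}^{(i)})=\sum_{i=1}^{M}\alpha_i g_i(x_0)$. Reading off the resulting widths yields $\mathcal{D}(\Psi)=(p,\sum_{i=1}^M k_{i,1},\ldots,\sum_{i=1}^M k_{i,H},q)$, which is precisely $\boxplus_{i=1}^{M}k_i$ by the definition in \cref{m07b}.

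The main obstacle is purely notational bookkeeping: three distinct stacking conventions (vertical for the first layer, block-diagonal for middle layers, coefficient-weighted horizontal for the last layer) must be tracked simultaneously, and the induced dimension vector verified against $\boxplus_{i=1}^{M}k_i$. A cleaner alternative is to establish the binary case $M=2$ by the construction above and then deduce the general case by induction on $M$, using associativity of $\boxplus$ on fixed-length architecture vectors, which is immediate from its componentwise definition.
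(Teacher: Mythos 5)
Your construction is correct and is exactly the standard parallelization argument used in the cited reference \cite[Lemma 3.9]{HJKN2020a} (the paper itself omits the proof and defers to that source): first layer stacked vertically on the shared input, hidden layers block-diagonal, and the output layer formed by horizontally concatenating the $\alpha_i$-scaled output matrices and summing the scaled output biases, which yields precisely the architecture $\boxplus_{i=1}^{M}k_i$. Both your direct construction and the alternative via the binary case plus induction on $M$ are sound.
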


\begin{lemma}[Existence of DNNs with $H$ hidden layers for $\mathrm{Id}_{\R^d}$--{\cite[Lemma 3.6]{CHW2022}}]\label{b03}
	Assume \cref{m07b} and let $d,H\in \N$.
	Then we have that
	$\mathrm{Id}_{\R^d}\in \mathcal{R}(\{\Phi\in\mathbf{N}\colon\mathcal{D}(\Phi)=\mathfrak{n}^d_{H+2} \}) $.
\end{lemma}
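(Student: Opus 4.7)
\textbf{Proof plan for \cref{b03}.}
The strategy is to write down an explicit $\Phi\in\bfN$ with $\calD(\Phi)=\mathfrak{n}^d_{H+2}=(d,2d,\ldots,2d,d)$ that realizes $\mathrm{Id}_{\R^d}$, and then verify by a short induction through the layers of \cref{m07} that $\calR(\Phi)=\mathrm{Id}_{\R^d}$. The one and only idea is the standard ReLU identity $t=\max\{t,0\}-\max\{-t,0\}$ for $t\in\R$, which lets us carry any real vector through arbitrarily many ReLU layers without distortion, provided we first split each coordinate into its positive and negative parts.

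First I would define the first weight--bias pair $(W_1,B_1)\in\R^{2d\times d}\times\R^{2d}$ by $B_1=0$ and by setting $W_1$ to be the ``copy-and-negate'' matrix whose rows are $e_1,-e_1,e_2,-e_2,\ldots,e_d,-e_d$ (where $e_1,\ldots,e_d$ is the standard basis of $\R^d$). Thus for any $x=(x_1,\ldots,x_d)\in\R^d$,
\begin{equation*}
\mathbf{A}_{2d}(W_1x+B_1)=\bigl(\max\{x_1,0\},\max\{-x_1,0\},\ldots,\max\{x_d,0\},\max\{-x_d,0\}\bigr)\in[0,\infty)^{2d}.
\end{equation*}
Next I would define, for each $n\in\{2,\ldots,H\}$ (vacuous if $H=1$), the pair $(W_n,B_n)=(I_{2d},0)\in\R^{2d\times 2d}\times\R^{2d}$. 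Since $\mathbf{A}_{2d}$ acts as the identity on $[0,\infty)^{2d}$, an easy induction on $n$ shows that the $n$-th hidden activation equals the first. Finally I would take $(W_{H+1},B_{H+1})\in\R^{d\times 2d}\times\R^d$ with $B_{H+1}=0$ and with $W_{H+1}$ the ``combine-and-subtract'' matrix whose $i$-th row has a $+1$ in the $(2i-1)$-th column and a $-1$ in the $(2i)$-th column and zeros elsewhere.

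Putting $\Phi=((W_1,B_1),\ldots,(W_{H+1},B_{H+1}))$, the definition of $\calR$ in \cref{m07} then gives, for every $x\in\R^d$ and every $i\in\{1,\ldots,d\}$,
\begin{equation*}
(\calR(\Phi)(x))_i=\max\{x_i,0\}-\max\{-x_i,0\}=x_i,
\end{equation*}
so $\calR(\Phi)=\mathrm{Id}_{\R^d}$, and by construction $\calD(\Phi)=(d,2d,\ldots,2d,d)=\mathfrak{n}^d_{H+2}$. I do not foresee a real obstacle here; the only care needed is bookkeeping to match the indexing conventions of \cref{m07} (in particular, that $\mathbf{A}$ is applied at the $H$ hidden layers but not at the output layer), and to verify that the chosen $W_n=I_{2d}$ in the middle layers is compatible with the fact that all inputs to those layers lie in $[0,\infty)^{2d}$, which is precisely where ReLU acts as the identity.
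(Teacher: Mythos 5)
Your construction is correct: the first layer splits each coordinate into positive and negative parts, the $H-1$ intermediate identity layers are harmless because ReLU acts as the identity on $[0,\infty)^{2d}$, and the output layer recombines via $x_i=\max\{x_i,0\}-\max\{-x_i,0\}$, yielding exactly the dimension vector $\mathfrak{n}^d_{H+2}=(d,2d,\ldots,2d,d)$. The paper omits the proof and cites \cite[Lemma 3.6]{CHW2022}, whose argument is precisely this standard construction, so your approach coincides with the intended one.
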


\begin{lemma}[{\cite[Lemma 3.7]{CHW2022}}]\label{b02}
Assume \cref{m07}, let $H,p,q\in \N$, and let $g\in C(\R^p,\R^q)$ satisfy that
$g\in \calR(\{\Phi\in \bfN\colon \dim(\calD(\Phi))=H+2\})$. Then for all
$n\in \N_0$ we have that
$g\in \calR(\{\Phi\in \bfN\colon \dim(\calD(\Phi))=H+2+n\})$.
\end{lemma}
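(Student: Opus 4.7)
The plan is to prove the lemma by induction on $n \in \N_0$. The base case $n = 0$ is immediate from the hypothesis, so the work lies in the inductive step, which reduces to the following one-layer extension claim: whenever a continuous function $h \in C(\R^p, \R^q)$ is realized by a DNN parameter tuple whose dimension vector has length $L \geq 3$, it is also realized by a DNN parameter tuple whose dimension vector has length $L + 1$.

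To prove this extension I exploit the pointwise ReLU identity $\mathbf{A}_1(y) - \mathbf{A}_1(-y) = y$ valid for all $y \in \R$. Given $\Phi = ((W_1, B_1), \ldots, (W_{L-1}, B_{L-1})) \in \bfN$ with $\calD(\Phi) = (k_0, k_1, \ldots, k_{L-1})$ and $\calR(\Phi) = h$, I would construct
\begin{equation*}
\Phi' = \bigl((W_1, B_1), \ldots, (W_{L-2}, B_{L-2}), (\tilde{W}, \tilde{B}), (W', 0)\bigr) \in \bfN
\end{equation*}
with $\calD(\Phi') = (k_0, k_1, \ldots, k_{L-2}, 2 k_{L-1}, k_{L-1})$, where the newly adjoined parts are
\begin{equation*}
\tilde{W} = \begin{pmatrix} W_{L-1} \\ -W_{L-1}\end{pmatrix} \in \R^{2k_{L-1}\times k_{L-2}},\quad \tilde{B} = \begin{pmatrix} B_{L-1} \\ -B_{L-1}\end{pmatrix} \in \R^{2k_{L-1}},\quad W' = \bigl(I_{k_{L-1}},\; -I_{k_{L-1}}\bigr) \in \R^{k_{L-1}\times 2k_{L-1}},
\end{equation*}
with $I_{k_{L-1}}$ denoting the $k_{L-1} \times k_{L-1}$ identity matrix. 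The first $L - 2$ hidden-layer activations of $\Phi'$ agree with those of $\Phi$ by construction, the new $(L-1)$-th hidden layer produces the $2k_{L-1}$-dimensional vector $\bigl(\mathbf{A}_{k_{L-1}}(W_{L-1} x_{L-2} + B_{L-1}),\; \mathbf{A}_{k_{L-1}}(-W_{L-1} x_{L-2} - B_{L-1})\bigr)$, and the final affine part collapses this pair to $W_{L-1} x_{L-2} + B_{L-1}$ via the componentwise ReLU identity, so that $\calR(\Phi') = \calR(\Phi) = h$.

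Iterating this one-layer extension $n$ times then yields a realization of $g$ by a DNN whose dimension vector has length $H + 2 + n$, which is the conclusion of the lemma. I do not anticipate any serious obstacle; the construction is purely algebraic and the only ingredient beyond bookkeeping is the elementary identity $\mathbf{A}_1(y) - \mathbf{A}_1(-y) = y$, which is the standard mechanism by which doubled-width ReLU layers simulate affine identities.
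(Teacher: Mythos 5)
Your construction is correct: the doubled-width last hidden layer together with the recombination matrix $\bigl(I_{k_{L-1}},\,-I_{k_{L-1}}\bigr)$ reproduces the final affine map exactly via $\mathbf{A}_1(y)-\mathbf{A}_1(-y)=y$, the new dimension vector indeed has length $L+1$, and iterating gives the claim for every $n\in\N_0$. The paper omits its own proof and defers to \cite[Lemma 3.7]{CHW2022}, where the argument is to compose $g$ with the deep ReLU identity network of \cref{b03} via the composition lemma (\cref{m11b}); that identity network is built from precisely the same doubled-width ReLU trick you use, so your proof is essentially the same argument, merely written out layer by layer instead of being routed through the composition machinery.
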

\begin{lemma}\label{b04}Assume \cref{m07b}. Then for all $n\in \N $, $d_0,d_1,\ldots, d_n\in \N$, 
$f_1\in C(\R^{d_1},\R^{d_{0}}), f_2\in C(\R^{d_2},\R^{d_{1}}), \ldots ,f_n\in C(\R^{d_n},\R^{d_{n-1}})$, 
 $\phi_1,\phi_2,\ldots,\phi_n\in \bfN$ 
with 
 $\forall i\in [1,n]\cap\Z\colon f_i=\calR(\phi_i)$ we have that
\begin{align}
\supnorm{\operatorname*{\odot}_{i=1}^n\mathcal{D}(\phi_i)}\leq \max\left\{
\supnorm{\calD(\phi_1)},
\supnorm{\calD(\phi_2)},\ldots,\supnorm{\calD(\phi_n)}, 2d_1,2d_2,\ldots,2d_{n-1}
\right\}
\end{align}
\end{lemma}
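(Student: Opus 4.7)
The plan is to proceed by induction on $n$, leveraging the associativity of $\odot$ established in \cref{k43} together with a direct calculation of how $\odot$ acts on the sup-norm of a layer-dimension vector.

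For the base case $n=1$ one has $\operatorname*{\odot}_{i=1}^1 \calD(\phi_i) = \calD(\phi_1)$, and the collection $\{2d_1,\ldots,2d_{n-1}\}$ is empty, so the claim reduces to $\supnorm{\calD(\phi_1)}\leq \supnorm{\calD(\phi_1)}$, which is trivial. For the inductive step from $n-1$ to $n$, I would first use \cref{k43} to write
\[
\operatorname*{\odot}_{i=1}^n\calD(\phi_i) \;=\; \calD(\phi_1)\odot \Bigl(\operatorname*{\odot}_{i=2}^n\calD(\phi_i)\Bigr).
\]
Next I would unpack the definition of $\odot$ from \cref{m07b}: for $\alpha=(\alpha_0,\ldots,\alpha_{H_1+1})$ and $\beta=(\beta_0,\ldots,\beta_{H_2+1})$ the coordinates of $\alpha\odot\beta$ are exactly $\beta_0,\beta_1,\ldots,\beta_{H_2},\beta_{H_2+1}+\alpha_0,\alpha_1,\ldots,\alpha_{H_1+1}$, so
\[
\supnorm{\alpha\odot\beta}\;\leq\;\max\bigl\{\supnorm{\alpha},\,\supnorm{\beta},\,\beta_{H_2+1}+\alpha_0\bigr\}.
\]

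I would then apply this with $\alpha=\calD(\phi_1)$ and $\beta=\operatorname*{\odot}_{i=2}^n\calD(\phi_i)$. Since the assumptions $f_i\in C(\R^{d_i},\R^{d_{i-1}})$ and $f_i=\calR(\phi_i)$ force $\calD(\phi_i)\in \{d_i\}\times\N^{H_i}\times\{d_{i-1}\}$, the input dimension of $\alpha$ is $d_1$ and the output dimension of $\beta$ (equal to the output dimension of $\calD(\phi_2)$) is also $d_1$, so the extra ``joining'' term is $\beta_{H_2+1}+\alpha_0 = 2d_1$. Invoking the induction hypothesis to bound
\[
\supnorm{\operatorname*{\odot}_{i=2}^n\calD(\phi_i)} \;\leq\; \max\bigl\{\supnorm{\calD(\phi_2)},\ldots,\supnorm{\calD(\phi_n)},\,2d_2,\ldots,2d_{n-1}\bigr\}
\]
and combining with the previous display yields the claimed inequality.

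The entire argument is essentially bookkeeping over the concatenation pattern that defines $\odot$; no conceptual obstacle arises. The only point requiring care is the consistency of the input/output dimensions at each successive composition, to ensure the induction hypothesis applies to $(\phi_2,\ldots,\phi_n)$ with the shifted indices $(d_1,d_2,\ldots,d_n)$ and so that the newly-introduced joining dimension is indeed $2d_1$ rather than some other value.
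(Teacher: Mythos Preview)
Your proposal is correct and follows essentially the same approach as the paper: induction on $n$, peeling off one network and using the explicit form of $\odot$ to identify the new ``joining'' entry as twice the matching interface dimension. The only cosmetic difference is that you split off $\phi_1$ on the left (introducing $2d_1$), whereas the paper splits off $\phi_n$ on the right (introducing $2d_{n-1}$); both are equally valid and the arguments are mirror images of each other.
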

\begin{proof}[Proof of \cref{b04}]
We will prove by induction on $n\in \N$. The base case $n=1$ is clear. 
For the induction step
$\N\in n-1\mapsto n\in \N$
 let $n\in \N\cap[2,\infty) $ satisfy that for all 
$d_0,d_1,\ldots, d_{n-1}\in \N$, 
$f_1\in C(\R^{d_1},\R^{d_{0}}), f_2\in C(\R^{d_2},\R^{d_{1}}), \ldots ,f_{n-1}\in C(\R^{d_{n-1}},\R^{d_{n-2}})$, 
 $\phi_1,\phi_2,\ldots,\phi_{n-1}\in \bfN$ 
with 
 $\forall i\in [1,n-1]\cap\Z\colon f_i=\calR(\phi_i)$ we have that
\begin{align}
\supnorm{\operatorname*{\odot}_{i=1}^{n-1}\mathcal{D}(\phi_i)}\leq\max\bigl\{
\supnorm{\calD(\phi_1)},
\supnorm{\calD(\phi_2)},\ldots,\supnorm{\calD(\phi_{n-1})}, 2d_1,2d_2,\ldots,2d_{n-2}
\bigr\}.\label{t01}
\end{align}
This implies that for all 
$d_0,d_1,\ldots, d_{n}\in \N$, 
$f_1\in C(\R^{d_1},\R^{d_{0}}), f_2\in C(\R^{d_2},\R^{d_{1}}), \ldots ,f_{n}\in C(\R^{d_{n}},\R^{d_{n-1}})$, 
 $\phi_1,\phi_2,\ldots,\phi_{n}\in \bfN$ 
with 
 $\forall i\in [1,n-1]\cap\Z\colon f_i=\calR(\phi_i)$ there exist 
$H_1,H_2\in \N$,
$\mathbf{a}_1\in \R^{H_1}$,
$\mathbf{a}_2\in \R^{H_2}
$ such that 
\begin{align}
C(\R^{d_{n-1}}, \R^{d_0}) \ni f_1\circ f_2\circ \ldots\circ f_{n-1}
\in 
\calR\left(\left\{\Phi\in \bfN\colon\calD(\Phi)=\operatorname*{\odot}_{i=1}^{n-1}\mathcal{D}(\phi_i)\right\}\right)  ,
\end{align}
$\operatorname*{\odot}_{i=1}^{n-1}\mathcal{D}(\phi_i)=(d_{n-1}, \mathbf{a}_1,d_{0})$, 
$\mathcal{D}(\phi_n)=( d_n,\mathbf{a}_2 ,d_{n-1} ) $, 
$
(\operatorname*{\odot}_{i=1}^{n-1}\mathcal{D}(\phi_i))\odot \calD(\phi_n)=(d_n,\mathbf{a}_2,2d_{n-1},\mathbf{a}_1,d_0)
 $, and
\begin{align} \begin{split} 
&
\supnorm{\operatorname*{\odot}_{i=1}^n\mathcal{D}(\phi_i)}=
\supnorm{\left[\operatorname*{\odot}_{i=1}^{n-1}\mathcal{D}(\phi_i)  
\right]\odot \calD(\phi_n)}\leq 
\max\left\{\supnorm{\operatorname*{\odot}_{i=1}^{n-1}\mathcal{D}(\phi_i)  },
\supnorm{\calD(\phi_n)}, 2d_{n-1}
\right\}\\
&\leq 
\max\Bigl\{\xeqref{t01}\max\bigl\{
\supnorm{\calD(\phi_1)},
\supnorm{\calD(\phi_2)},\ldots,\supnorm{\calD(\phi_{n-1})}, 2d_1,2d_2,\ldots,2d_{n-2}
\bigr\},
\supnorm{\calD(\phi_n)}, 2d_{n-1}
\Bigr\}\\
&
\leq \max\left\{
\supnorm{\calD(\phi_1)},
\supnorm{\calD(\phi_2)},\ldots,\supnorm{\calD(\phi_n)}, 2d_1,2d_2,\ldots,2d_{n-1}
\right\}.\end{split}
\end{align}
This proves the induction step. Induction hence completes the proof of \cref{b04}.
\end{proof}

\subsection{DNN representation of our Euler-Maruyama approximations}
In \cref{l01} below we prove that Euler-Maruyama approximations
\eqref{SDE X epsilon K} are DNNs if their coefficients are DNNs.

\begin{lemma}\label{l01}
 Assume Settings \ref{setting approx NN}, \ref{m07}, and  \ref{theta setting}. 
 For every $d\in \N$, $\varepsilon\in (0,1)$
let
$F_{\varepsilon}^d\colon\R^d\to \R^{d\times d}$, 
$G^d\colon \R^d\to \R^d$ be measurable and satisfy for all 
$y,z\in \R^d$
that
$\gamma_{\varepsilon}^d(y,z)=F_{\varepsilon}^d(y)G^d(z) $. 
 For every $d\in \N$, $\varepsilon\in (0,1)$,
$v\in\R^d$
 let
$\Phi_{\beta_\varepsilon^d},\Phi_{\sigma_\varepsilon^d,v}, 
\Phi_{F^d_\varepsilon,v} \in \bfN
$
satisfy that
$\beta_\varepsilon^d=\calR(\Phi_{\beta_\varepsilon^d}) $, 
$\sigma_\varepsilon^d (\cdot)v=
\calR(\Phi_{\sigma_\varepsilon^d,v})
$,
$
F^d_\varepsilon(\cdot)v=
\calR(\Phi_{F^d_\varepsilon,v} )$. 
Moreover, assume for all
 $d\in \N$, $\varepsilon\in (0,1]$,
$v\in\R^d$ that
$\calD(\Phi_{\sigma_\varepsilon^d,v})=\calD(\Phi_{\sigma_\varepsilon^d,0})$ and
$\calD(\Phi_{F_\varepsilon^d,v})=\calD(\Phi_{F_\varepsilon^d,0})$. 

Let $\omega\in \Omega$.
Then there exists
$(\calX^{d,\theta,K,\varepsilon,t}_{s})_{
  d\in\N  ,
 \theta\in \Theta ,
 \varepsilon\in (0,1) ,
  t\in [0,T) ,
 s\in (t,T] 
}\subseteq \bfN$ such that the following items are true.
\begin{enumerate}[(i)]
\item For all $ d\in\N $,
$\theta\in \Theta$,
$\varepsilon\in (0,1)$,
 $t\in [0,T)$,
$s\in (t,T]$, $x\in \R^d$ we have that
$\calR(\calX^{d,\theta,K,\varepsilon,t}_{s})\in C(\R^d,\R^d)$
and
$(\calR(\calX^{d,\theta,K,\varepsilon,t}_{s}))(x)
=X^{d,\theta,K,\varepsilon,t,x}_{s}(\omega)$.
\item For all $ d\in\N $,
$\theta\in \Theta$,
$\varepsilon\in (0,1)$,
 $t_1\in [0,T)$,
$s_1\in (t_1,T]$, 
 $t_2\in [0,T)$,
$s_2\in (t_2,T]$, 
$x\in \R^d$ we have that
$\calD(\calX^{d,\theta_1,K,\varepsilon,t_1}_{s_1})=
\calD(
\calX^{d,\theta_2,K,\varepsilon,t_2}_{s_2})
$.
\item For all $ d\in\N $,
$\theta\in \Theta$,
$\varepsilon\in (0,1)$,
 $t\in [0,T)$,
$s\in (t,T]$ we have that
$
\dim(\calD(\calX^{d,\theta,K,\varepsilon,t}_{s}))=K(
\max\{\dim(\calD(\Phi_{\beta^d_\varepsilon})) ,\dim(\calD(\Phi_{\sigma^d_\varepsilon,0})),
\dim(\calD(\Phi_{F^d_\varepsilon,0}))\}
-1)+1
$. 
\item For all $ d\in\N $,
$\theta\in \Theta$,
$\varepsilon\in (0,1)$,
 $t\in [0,T)$,
$s\in (t,T]$ we have that
$ \supnorm{\calD(\calX^{d,\theta,K,\varepsilon,t}_{s})}\leq 2d+\supnorm{\calD(\Phi_{\beta^d_\varepsilon})}
+
\supnorm{
\calD(\Phi_{\sigma^d_\varepsilon,0})}
+
\supnorm{
\calD(\Phi_{F^d_\varepsilon,0})}
$.
\end{enumerate}

\end{lemma}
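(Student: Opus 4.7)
The overall strategy is to fix $\omega\in\Omega$ and observe that \eqref{SDE X epsilon K} becomes a deterministic recursion along the partition of $[t,s]$ by the points $\{t,s\}\cup(\{kT/K:1\le k\le K-1\}\cap(t,s))$. There are at most $K$ such sub-intervals, and on each one $[s_\ell,s_{\ell+1}]$ the map $X_{s_\ell}\mapsto X_{s_{\ell+1}}$ is
\[
X_{s_{\ell+1}}=X_{s_\ell}+(s_{\ell+1}-s_\ell)\,\beta^d_\varepsilon(X_{s_\ell})+\sigma^d_\varepsilon(X_{s_\ell})\,\Delta W_\ell+F^d_\varepsilon(X_{s_\ell})\,v_\ell,
\]
where, thanks to the factorisation $\gamma^d_\varepsilon(y,z)=F^d_\varepsilon(y)G^d(z)$, the vector $v_\ell(\omega)=\int_{s_\ell}^{s_{\ell+1}}\!\int_{\R^d\setminus\{0\}}G^d(z)\,\tilde N^{d,\theta}(dz,du)(\omega)\in\R^d$ is independent of the spatial variable, and so is $\Delta W_\ell(\omega)=W^{d,\theta}_{s_{\ell+1}}(\omega)-W^{d,\theta}_{s_\ell}(\omega)\in\R^d$.

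Next, I would express the one-step map as a DNN. Using \cref{p01}, the rescaled drift $(s_{\ell+1}-s_\ell)\beta^d_\varepsilon(\cdot)$ admits a realisation with architecture $\calD(\Phi_{\beta^d_\varepsilon})$; by hypothesis $\sigma^d_\varepsilon(\cdot)\Delta W_\ell$ and $F^d_\varepsilon(\cdot)v_\ell$ admit DNN realisations $\Phi_{\sigma^d_\varepsilon,\Delta W_\ell}$ and $\Phi_{F^d_\varepsilon,v_\ell}$ with $v$-independent architectures $\calD(\Phi_{\sigma^d_\varepsilon,0})$ and $\calD(\Phi_{F^d_\varepsilon,0})$ respectively; and $x\mapsto x$ has architecture $\mathfrak{n}^d_H$ for any $H\ge 2$ by \cref{b03}. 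Setting $H:=\max\{\dim\calD(\Phi_{\beta^d_\varepsilon}),\dim\calD(\Phi_{\sigma^d_\varepsilon,0}),\dim\calD(\Phi_{F^d_\varepsilon,0})\}$, I lift each of the four DNNs to common dimension $H$ via \cref{b02} and add them through \cref{b01b} to obtain a single DNN $\Psi_\ell\in\bfN$ realising the step map. The resulting architecture $\calD(\Psi_\ell)$ has dimension $H$ and depends only on $d,\varepsilon,H$ (not on $\omega,\theta,\ell,t,s$); by iterating \cref{b15} one obtains $\supnorm{\calD(\Psi_\ell)}\le 2d+\supnorm{\calD(\Phi_{\beta^d_\varepsilon})}+\supnorm{\calD(\Phi_{\sigma^d_\varepsilon,0})}+\supnorm{\calD(\Phi_{F^d_\varepsilon,0})}$.

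Then I would define $\calX^{d,\theta,K,\varepsilon,t}_s$ as the $\odot$-composition of the $N(\le K)$ step-DNNs corresponding to the actual sub-intervals of $[t,s]$, further composed with $K-N$ identity DNNs $\mathfrak{n}^d_H$ from \cref{b03}; by \cref{k43} the ordering of associativity is immaterial. Item~(i) follows from \cref{m11b} together with the recursive definition of $X^{d,\theta,K,\varepsilon,t,x}_s(\omega)$; item~(ii) is automatic because every ingredient architecture is $(\omega,\theta,t,s)$-free; the dimension count $K(H-1)+1$ in item~(iii) is immediate from the $\odot$-formula in \cref{m07b} applied to exactly $K$ factors of dimension $H$; and item~(iv) follows from \cref{b04}, which yields $\supnorm{\calD(\calX^{d,\theta,K,\varepsilon,t}_s)}\le\max\{\supnorm{\calD(\Psi_\ell)},2d\}$, since the intermediate dimensions of every factor are $d$ and $\supnorm{\calD(\Psi_\ell)}\ge 2d$ by the bound above.

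The conceptual crux is the factorisation assumption $\gamma^d_\varepsilon(y,z)=F^d_\varepsilon(y)G^d(z)$, which is what enables the jump integral to be absorbed into a DNN in the spatial variable: without such a separation of variables, the non-local term does not obviously fit into the DNN framework of \cref{m07}. I expect the main technical obstacle to be the careful bookkeeping of dimensions and widths through the padding, summation and composition steps so as to obtain architectures that are genuinely independent of $\omega,\theta,t,s$ (item (ii)) and that match exactly the depth formula in item~(iii); the key observation is that the architecture $\calD(\Psi_\ell)$ at each sub-step depends only on the architectures of the input DNNs $\Phi_{\beta^d_\varepsilon},\Phi_{\sigma^d_\varepsilon,0},\Phi_{F^d_\varepsilon,0}$ and on $H$, and that padding with $\mathfrak{n}^d_H$ factors preserves both the composition's realisation and its layer dimensions.
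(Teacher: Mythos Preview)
Your approach is essentially the same as the paper's: express each Euler step as a sum of four DNN-realisable functions (identity, scaled drift, diffusion applied to the Brownian increment, and $F^d_\varepsilon$ applied to the aggregated jump vector), combine them with \cref{b01b}, and compose $K$ such steps via \cref{m11b}. You correctly identify the factorisation $\gamma^d_\varepsilon(y,z)=F^d_\varepsilon(y)G^d(z)$ as the device that makes the non-local term amenable to the DNN calculus.

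There is, however, a genuine gap in your treatment of item~(ii). You pad with $K-N$ copies of $\mathfrak{n}^d_H$, whose hidden layers have width exactly $2d$, whereas the step DNN $\Psi_\ell$ has hidden widths $2d$ \emph{plus} the hidden widths of $\Phi_{\beta^d_\varepsilon},\Phi_{\sigma^d_\varepsilon,0},\Phi_{F^d_\varepsilon,0}$; these are different architectures in general. Consequently the $\odot$-composition of $N$ copies of $\calD(\Psi_\ell)$ with $K-N$ copies of $\mathfrak{n}^d_H$ produces a layer-dimension vector that depends on $N=N(t,s)$, and item~(ii) fails. (Items~(iii) and~(iv) survive, since all $K$ factors still have dimension $H$ and the sup-norm bound from \cref{b04} only sees the maximum.) The paper circumvents this by \emph{always} composing $K$ maps of the \emph{same} architecture: it defines the $k$-th step map $\phi_k$ for every $k\in\{1,\dots,K\}$ using a truncated time parameter $J_k(s)$, so that when the grid interval lies outside $(t,s]$ all three increments vanish and $\phi_k=\mathrm{Id}_{\R^d}$---but realised by a DNN with the full step architecture $\mathfrak{n}^d_H\boxplus\calD(\Phi_{\beta^d_\varepsilon})\boxplus\calD(\Phi_{\sigma^d_\varepsilon,0})\boxplus\calD(\Phi_{F^d_\varepsilon,0})$, not by $\mathfrak{n}^d_H$. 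Your argument is easily repaired by the same device: pad with ``trivial'' step DNNs (zero increments) rather than with $\mathfrak{n}^d_H$.
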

\begin{proof}[Proof of \cref{l01}]Throughout this proof let the notation in \cref{m07b} be given.
First, observe that
for all 
$ d\in\N $,
$\theta\in \Theta$,
$x\in\R^d$,
$\varepsilon\in (0,1)$,
$k\in [1,K]\cap\Z$, $t\in [0,T)$,
$s\in [\frac{kT}{K},\frac{(k+1)T}{K}]$ we have that
\begin{align} \label{d01}\begin{split} 
X^{d,\theta,K,\varepsilon,t,x}_{s}(\omega)
&=X^{d,\theta,K,\varepsilon,t,x}_{\max\{t,\frac{kT}{K}\}}(\omega)
+\beta^d_\varepsilon\left(X^{d,\theta,K,\varepsilon,t,x}_{\max\{t,\frac{kT}{K}\}}(\omega)\right)\left(s-\max\{t,\tfrac{kT}{K}\}\right)\\&\quad+
\sigma^d_\varepsilon\left(X^{d,\theta,K,\varepsilon,t,x}_{\max\{t,\frac{kT}{K}\}}(\omega)\right) \left(W^{d,\theta}_s(\omega)-
W^{d,\theta}_{\max\{t,\frac{kT}{K}\}}(\omega)\right)\\
&\quad
+
F^d_\varepsilon\left(X^{d,\theta,K,\varepsilon,t,x}_{\max\{t,\frac{kT}{K}\}}(\omega)\right)\int_{\max\{t,\frac{kT}{K}\}}^{s}\int_{\R^d\setminus\{0\}}G^d(z)\tilde{N}^{d,\theta}(\omega)(dz,du).\end{split}
\end{align}
Next, for every 
$ d\in\N $,
$\theta\in \Theta$,
$x\in\R^d$,
$\varepsilon\in (0,1)$,
$k\in [1,K]\cap\Z$, $t\in [0,T)$,
$s\in (t,T]$
let $ J_k(s)\in \R$,
$
\phi^{d,\theta,K,\varepsilon}_{t,s,k}(x)\in \R^d$
 satisfy that
\begin{align} \begin{split} 
J_k(s)&=\max\{ t,\tfrac{(k-1)T}{K} \}\1_{ [0,\max\{ t,\frac{(k-1)T}{K} \}    ] }(s)\\&\quad 
+s\1_{ (\max\{ t,\frac{(k-1)T}{K} \},  \max\{ t,\frac{kT}{K} \} 
 ] }(s)+ \max\{ t,\tfrac{kT}{K} \}
\1_{ (  \max\{ t,\frac{kT}{K} \},T ]}(s),\end{split}
\end{align} 
and
\begin{align} \label{d02}\begin{split} 
\phi^{d,\theta,K,\varepsilon}_{t,s,k}(x)
&=x
+\beta^d_\varepsilon(x)\left(J_k(s)-\max\{t,\tfrac{(k-1)T}{K}\}\right)+
\sigma^d_\varepsilon(x)\left(W^{d,\theta}_{J_k(s)}(\omega)-
W^{d,\theta}_{\max\{t,\frac{(k-1)T}{K}\}}(\omega)\right)\\
&\quad
+
F^d_\varepsilon(x)\int_{\max\{t,\frac{(k-1)T}{K}\}}^{s}\int_{\R^d\setminus\{0\}}G^d(z)\tilde{N}^{d,\theta}(\omega)(dz,du).\end{split}
\end{align}
Furthermore, for every 
$ d\in\N $,
$\theta\in \Theta$,
$\varepsilon\in (0,1)$,
$k\in [1,K]\cap\Z$, $t\in [0,T)$,
$s\in (t,T]$ let 
\begin{align}\label{d03}
\psi^{d,\theta,K,\varepsilon}_{t,s,k}=
\phi^{d,\theta,K,\varepsilon}_{t,s,k}
\circ
\phi^{d,\theta,K,\varepsilon}_{t,s,k-1}\circ\ldots
\circ
\phi^{d,\theta,K,\varepsilon}_{t,s,1}
.
\end{align}
Note that for all 
$ d\in\N $,
$\theta\in \Theta$,
$\varepsilon\in (0,1)$, 
$k\in [1,K-1]\cap\Z$, $s\in [0,\max\{t,\frac{(k-1)T}{K}\}]$ we have that $\phi^{d,\theta,K,\varepsilon}_{t,s,k}=\mathrm{Id}_{\R^d} $. This ensures for all
$ d\in\N $,
$\theta\in \Theta$,
$\varepsilon\in (0,1)$, 
 $k\in [1,K-1]\cap\Z$, $n\in [k+1,K]\cap\Z$,
$s\in [0,\max\{t,\frac{kT}{K}\}]$ that 
$\psi^{d,\theta,K,\varepsilon}_{t,s,k}=
\psi^{d,\theta,K,\varepsilon}_{t,s,n}$ and in particular 
$\psi^{d,\theta,K,\varepsilon}_{t,s,k}=
\psi^{d,\theta,K,\varepsilon}_{t,s,K}$.
Observe that for all 
$ d\in\N $,
$\theta\in \Theta$,
$\varepsilon\in (0,1)$, 
$k\in [1,K]\cap\Z$, $s\in [0,\max\{t,\frac{kT}{K}\}]$, $x\in \R^d$ that 
$\psi^{d,\theta,K,\varepsilon}_{t,s,k}(x)=X^{d,\theta,K,\varepsilon,t,x}_{s}(\omega)$.
Therefore, for all
$ d\in\N $,
$\theta\in \Theta$,
$\varepsilon\in (0,1)$,
$s\in[0,T]$, $x\in \R^d$ we have that
$\psi^{d,\theta,K,\varepsilon}_{t,s,K}(x)=X^{d,\theta,K,\varepsilon,t,x}_{s}$, i.e.,
\begin{align}\label{k03}
X^{d,\theta,K,\varepsilon,t,x}_{s}(\omega)=
\phi^{d,\theta,K,\varepsilon}_{t,s,K}
\circ
\phi^{d,\theta,K,\varepsilon}_{t,s,K-1}\circ\ldots
\circ
\phi^{d,\theta,K,\varepsilon}_{t,s,1} (x).
\end{align}
Next, assume w.l.o.g.\ (cf. \cref{b02}) that \begin{align}\label{k02}
\dim(\calD(\Phi_{\beta^d_\varepsilon})) =\dim(\calD(\Phi_{\sigma^d_\varepsilon,0}))=
\dim(\calD(\Phi_{F^d_\varepsilon,0})).
\end{align}
\cref{b03} shows for all $d\in \N$, $\varepsilon\in (0,\infty)$ that
\begin{align}
\mathrm{Id}_{\R^d}\in \calR(\{\Phi\in \bfN\colon \calD(\Phi)=\mathfrak{n}^d_{\dim (\Phi_{\beta^d_\varepsilon}) }\}).
\end{align}
Therefore, \eqref{d02},
\eqref{k02}, and \cref{b01b}
imply for all $ d\in\N $,
$\theta\in \Theta$,
$\delta,\varepsilon\in (0,1)$,
$k\in [1,K]\cap\Z$, $t\in [0,T)$,
$s\in (t,T]$ that
\begin{align}
\phi^{d,\theta,K,\varepsilon}_{t,s,k}(\cdot)\in \calR\left(\left\{\Phi\in \bfN\colon 
\calD(\Phi)=
\mathfrak{n}^d_{\dim(\Phi_{\beta^d_\varepsilon})  }\boxplus \calD(\Phi_{\beta^d_\varepsilon})
\boxplus \calD(\Phi_{\sigma^d_\varepsilon,0})
\boxplus
\calD(\Phi_{F^d_\varepsilon,0})
\right\}\right).
\end{align}
This, \eqref{k03}, and \cref{m11b} ensure that there exists
$(\calX^{d,\theta,K,\varepsilon,t}_{s})_{
  d\in\N  ,
 \theta\in \Theta ,
 \varepsilon\in (0,1) ,
  t\in [0,T) ,
 s\in (t,T] 
}\subseteq \bfN$
such that
for all
$ d\in\N $,
$\theta\in \Theta$,
$\varepsilon\in (0,1)$, 
 $t\in [0,T)$,
$s\in (t,T]$, $x\in \R^d$ we have that
\begin{align} \begin{split} 
&
\calD(\calX^{d,\theta,K,\varepsilon,t}_{s})=
\operatorname*{\odot}_{k=1}^K\left[
\mathfrak{n}^d_{\dim(\Phi_{\beta^d_\varepsilon})  }\boxplus \calD(\Phi_{\beta^d_\varepsilon})
\boxplus \calD(\Phi_{\sigma^d_\varepsilon,0})
\boxplus
\calD(\Phi_{F^d_\varepsilon,0})\right],\\
& (\calR(\calX^{d,\theta,K,\varepsilon,t}_{s}))(x)
=X^{d,\theta,K,\varepsilon,t,x}_{s}(\omega).
\label{k08}\end{split}
\end{align}
Thus, the definition of $\odot$ and an induction argument show that for all
$ d\in\N $,
$\theta\in \Theta$,
$\varepsilon\in (0,1)$,
 $t\in [0,T)$,
$s\in (t,T]$, $x\in \R^d$ we have that
\begin{align}
\dim(\calD(\calX^{d,\theta,K,\varepsilon,t}_{s}))=K(\dim(\calD(\Phi_{\beta^d_\varepsilon}))-1)+1.
\end{align}
Next, \eqref{k08}, \cref{b04}, the triangle inequality (cf. \cref{b15}), and \eqref{k07}
imply that
  for all
$ d\in\N $,
$\theta\in \Theta$,
$\varepsilon\in (0,1)$,
 $t\in [0,T)$,
$s\in (t,T]$, $x\in \R^d$ we have that
\begin{align} \begin{split} \supnorm{\calD(\calX^{d,\theta,K,\varepsilon,t}_{s})}&=
\supnorm{\operatorname*{\odot}_{k=1}^K\left[
\mathfrak{n}^d_{\dim(\Phi_{\beta^d_\varepsilon})  }\boxplus \calD(\Phi_{\beta^d_\varepsilon})
\boxplus \calD(\Phi_{\sigma^d_\varepsilon,0})
\boxplus
\calD(\Phi_{F^d_\varepsilon,0})\right]
}\\
&\leq\max\left\{2d,
\supnorm{
\mathfrak{n}^d_{\dim(\Phi_{\beta^d_\varepsilon})  }\boxplus \calD(\Phi_{\beta^d_\varepsilon})
\boxplus \calD(\Phi_{\sigma^d_\varepsilon,0})
\boxplus
\calD(\Phi_{F^d_\varepsilon,0})}
\right\}
\\
&\leq \max\left\{2d,
\supnorm{
\mathfrak{n}^d_{\dim(\Phi_{\beta^d_\varepsilon})  }}+ \supnorm{\calD(\Phi_{\beta^d_\varepsilon})}
+
\supnorm{
\calD(\Phi_{\sigma^d_\varepsilon,0})}
+
\supnorm{
\calD(\Phi_{F^d_\varepsilon,0})}
\right\}\\
&=2d+\supnorm{\calD(\Phi_{\beta^d_\varepsilon})}
+
\supnorm{
\calD(\Phi_{\sigma^d_\varepsilon,0})}
+
\supnorm{
\calD(\Phi_{F^d_\varepsilon,0})}.	
\end{split}
\end{align}
The proof of  \cref{l01} is thus completed.
\end{proof}

\subsection{DNN representation of our MLP approximations}
In 
\cref{k04} below we prove that the MLP approximations under consideration are DNNs.
\begin{proposition}
\label{k04}
Assume the setting of \cref{l01}. For every
$d\in \N$, $\varepsilon\in (0,1)$
let $f_\varepsilon\in C(\R,\R)$,
$\Phi_{f_\varepsilon}, \Phi_{g^d_\varepsilon}\in \bfN$
satisfy that
$ \calR(\Phi_{f_\varepsilon})=f_\varepsilon $ and
$ \calR(\Phi_{g^d_\varepsilon})=g^d_\varepsilon $.
For every $d\in \N$, $\varepsilon\in (0,1)$
let 
$U^{d,\theta,K,\varepsilon}_{n,m}\colon [0,T]\times\R^d\times \Omega\to \R$, 
$\theta\in \Theta$,  
$n,m\in \Z$, 
satisfy for all 
$\theta\in \Theta$,  
$n\in \N_0$, $m\in \N$, $t\in [0,T]$, $x\in \R^d$ that
\begin{align} \begin{split} &U^{d,\theta,K,\varepsilon}_{n,m}(t,x)=
\frac{\1_{\N}(n)}{m^n}\sum_{i=1}^{m^n}g^d_\varepsilon\Bigl( X^{ d,(\theta,0,-i), K,\varepsilon,t,x}_{T} \Bigr)\\
&\quad
+\sum_{\ell=0}^{n-1}
\frac{(T-t)}{m^{n-\ell}}
\sum_{i=1}^{m^{n-\ell}}\Bigl(
f_\varepsilon\circ
U^{d,(\theta,\ell,i),K,\varepsilon}_{\ell,m}
-\1_{\N}(\ell)
f_\varepsilon\circ
U^{d,(\theta,-\ell,i),K,\varepsilon}_{\ell-1,m}\Bigr)
\Bigl(\mathfrak{T}_t^{(\theta,\ell,i)}, 
 X^{ d,(\theta,\ell,i), K,\varepsilon,t,x}_{\mathfrak{T}_t^{(\theta,\ell,i)}} 
\Bigr).\end{split}
\end{align}
Let
$(c_{d,\varepsilon})_{d\in\N,\varepsilon\in (0,1)}\subseteq \R $ satisfy 
for all 
$d\in\N$, $\varepsilon\in (0,1)$ 
that
\begin{align}
c_{d,\varepsilon} \geq 
2d+\supnorm{\calD(\Phi_{f_{\varepsilon}})}
+\supnorm{\calD(\Phi_{g^d_{\varepsilon}})}
+\supnorm{\calD(\Phi_{\beta^d_{\varepsilon}})}
+\supnorm{\calD(\Phi_{\sigma^d_{\varepsilon},0})}
+\supnorm{\calD(\Phi_{F^d_{\varepsilon},0})}
.\label{c01}
\end{align}
Then for all
$m\in \N$,
$n\in \N_0$, 
$d\in \N$,
$\varepsilon\in (0,1)$ there exists
$ (\Phi^{d,\theta,K,\varepsilon}_{n,m,t} )_{t\in[0,T],\theta\in \Theta}\subseteq \bfN$ such that the following items are true.
\begin{enumerate}[(i)]
\item\label{h01} We have for all $t_1,t_2\in [0,T]$, $\theta_1,\theta_2\in \Theta $
that $\calD(\Phi^{d,\theta_1,K,\varepsilon}_{n,m,t_1})
=\calD(\Phi^{d,\theta_2,K,\varepsilon}_{n,m,t_2})
$.
\item\label{h02} We have for all
$t\in [0,T]$, $\theta\in \Theta$ that
\begin{align} \begin{split} 
\dim(\calD( 
\Phi^{d,\theta,K,\varepsilon}_{n,m,t}
 ))
&=(n+1)
\left[
K \left( \max\left\{\dim(\calD(\Phi_{\beta^d_\varepsilon  })),
\dim(\calD(\Phi_{\sigma^d_\varepsilon  }))
 \right\} -1\right)+1\right]\\&\quad
+n(\dim(\calD (\Phi_{f_\varepsilon} )  ) -2)
+\dim(\calD (\Phi_{g^d_\varepsilon} )  ) -1.
\end{split}\end{align}
\item\label{h03} We have for all
$t\in [0,T]$, $\theta\in \Theta$ that
$\supnorm{\calD(\Phi^{d,\theta,K,\varepsilon}_{n,m,t})}\leq c_{d,\varepsilon}(3m)^n$.
\item \label{h04} We have for all
$t\in [0,T]$, $\theta\in \Theta$, $x\in \R^d$ that
$U^{d,\theta,K,\varepsilon}_{n,m}(t,x,\omega)=(\calR(\Phi^{d,\theta,K,\varepsilon}_{n,m,t}))(x)$.
\end{enumerate}
\end{proposition}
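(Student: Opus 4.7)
The plan is to prove the proposition by induction on $n\in \N_0$, uniformly in $t\in[0,T]$ and $\theta\in\Theta$, by reading the MLP recursion as a sum of compositions of neural networks and invoking the operational lemmas established in Lemmas~\ref{k43}--\ref{b04}. Throughout I fix $d\in\N$, $K\in\N$, $m\in\N$, $\varepsilon\in(0,1)$ and $\omega\in\Omega$, and I write $\calX^{\theta,t}_s:=\calX^{d,\theta,K,\varepsilon,t}_s$ for the DNN provided by \cref{l01}.

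For the base case $n=0$ note that the inner sum in the definition of $U^{d,\theta,K,\varepsilon}_{n,m}$ is empty and the outer factor $\1_{\N}(0)=0$, so $U^{d,\theta,K,\varepsilon}_{0,m}\equiv 0$. Using \cref{m11b} on the composition $g^d_\varepsilon\circ \calX^{\theta,t}_T$ and then \cref{p01} with the scalar $\lambda=0$ produces a DNN $\Phi^{d,\theta,K,\varepsilon}_{0,m,t}$ realising the zero function with $\calD(\Phi^{d,\theta,K,\varepsilon}_{0,m,t})=\calD(\Phi_{g^d_\varepsilon})\odot \calD(\calX^{\theta,t}_T)$; because \cref{l01} yields dimensions of $\calX^{\theta,t}_s$ that are independent of $(\theta,t,s)$, items \eqref{h01}, \eqref{h02} and \eqref{h03} hold (the width bound $(3m)^0=1$ follows from \eqref{c01} combined with \cref{b04}).

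For the inductive step $n-1\to n$ I would first use the induction hypothesis to obtain for each $\ell\in\{0,\ldots,n-1\}$, each $\theta'\in\Theta$ and each $s\in[0,T]$ a DNN $\Phi^{d,\theta',K,\varepsilon}_{\ell,m,s}$ realising $x\mapsto U^{d,\theta',K,\varepsilon}_{\ell,m}(s,x,\omega)$ with a dimension $D_\ell$ independent of $(\theta',s)$. Applying \cref{m11b} twice assembles, for each index $i$ and each $\ell$, a DNN realising the summand
\[
x\mapsto\bigl(f_\varepsilon\circ U^{d,(\theta,\ell,i),K,\varepsilon}_{\ell,m}(\mathfrak{T}^{(\theta,\ell,i)}_t(\omega),\,\cdot\,)\bigr)\bigl(X^{d,(\theta,\ell,i),K,\varepsilon,t,x}_{\mathfrak{T}^{(\theta,\ell,i)}_t(\omega)}(\omega)\bigr)
\]
with layer-dimension vector $\calD(\Phi_{f_\varepsilon})\odot \calD(\Phi^{d,(\theta,\ell,i),K,\varepsilon}_{\ell,m,\mathfrak{T}^{(\theta,\ell,i)}_t(\omega)})\odot \calD(\calX^{(\theta,\ell,i),t}_{\mathfrak{T}^{(\theta,\ell,i)}_t(\omega)})$; analogously for $U_{\ell-1}$ and for the $g^d_\varepsilon$-summands. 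Lengths of $\odot$ are additive up to a constant offset, so the resulting depths match exactly the value $D_n$ stipulated in \eqref{h02} for the summand with $\ell=n-1$ and are strictly smaller otherwise. \cref{b02} then pads each shorter summand to the common depth $D_n$ without changing its realisation or its width, after which \cref{b01b} combines all summands (with the scalars $m^{-n}$, $\pm(T-t)m^{\ell-n}$ from the MLP formula, which, via \cref{p01}, can be absorbed into the final linear layer of each summand before applying $\boxplus$) into a single DNN $\Phi^{d,\theta,K,\varepsilon}_{n,m,t}$ realising $U^{d,\theta,K,\varepsilon}_{n,m}(t,\,\cdot\,,\omega)$. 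Since all dimensions that enter the construction depend on $(\theta,t)$ only through the dimensions of $\calX^{\theta,t}_s$ and of $\Phi^{d,\theta',K,\varepsilon}_{\ell,m,s}$, both of which are independent of $(\theta,t,s)$ by \cref{l01} and the induction hypothesis, item \eqref{h01} follows.

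The main step that requires care is verifying the width bound \eqref{h03}. By \cref{b15} the width of a $\boxplus$ of summands is at most the sum of individual widths; by \cref{b04} combined with the definition of $c_{d,\varepsilon}$ each composition $g^d_\varepsilon\circ \calX^{\theta,t}_T$, respectively $f_\varepsilon\circ U^{d,\theta',K,\varepsilon}_{\ell,m}(s,\cdot)\circ \calX^{\theta',t}_s$, has width at most $\max\{c_{d,\varepsilon},\supnorm{\calD(\Phi^{d,\theta',K,\varepsilon}_{\ell,m,s})}\}$. Counting the $m^n$ terminal-condition summands and the $m^{n-\ell}+\1_{\N}(\ell)m^{n-\ell}\le 2m^{n-\ell}$ summands at each level $\ell\in\{0,\ldots,n-1\}$ and writing $W_n:=\supnorm{\calD(\Phi^{d,\theta,K,\varepsilon}_{n,m,t})}$, the induction hypothesis $W_\ell\le c_{d,\varepsilon}(3m)^\ell$ yields
\[
W_n\le c_{d,\varepsilon}\Bigl(m^n+\sum_{\ell=0}^{n-1}m^{n-\ell}(3m)^\ell+\sum_{\ell=1}^{n-1}m^{n-\ell}(3m)^{\ell-1}\Bigr)\le c_{d,\varepsilon}m^n\bigl(1+2\cdot 3^{n-1}\bigr)\le c_{d,\varepsilon}(3m)^n,
\]
which is the only nontrivial estimate in the proof and closes the induction together with the dimension check $D_n=D_{n-1}+[K(\max\{\dim \calD(\Phi_{\beta^d_\varepsilon}),\dim \calD(\Phi_{\sigma^d_\varepsilon,0})\}-1)+1]+(\dim \calD(\Phi_{f_\varepsilon})-2)$, which is immediate from the additivity of $\odot$ on lengths. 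Finally, item \eqref{h04} follows by construction from the identity \eqref{k03} together with the realisation identities of \cref{m11b}, \cref{b01b} and \cref{p01}.
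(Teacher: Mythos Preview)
Your proposal is correct and follows essentially the same inductive strategy as the paper: build each summand of the MLP recursion as a composition via \cref{m11b}, pad all summands to a common depth using identity networks, then combine them via \cref{b01b}, closing the width bound with the same geometric-series count (the paper carries out the step $n\to n+1$ and obtains equality $c_{d,\varepsilon}(3m)^{n+1}$, but your slack estimate $m^n(1+2\cdot 3^{n-1})\le (3m)^n$ works just as well). The only imprecision is your appeal to \cref{b02} for padding ``without changing its width'': \cref{b02} as stated controls only the depth, so you should---as the paper does---pad explicitly with the identity networks $\mathfrak{n}_k$ of \cref{b03}, whose layer widths are $2$ (for scalar output) and hence are dominated by $c_{d,\varepsilon}$, which is what makes the subsequent application of \cref{b04} and \cref{b15} go through.
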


\begin{proof}[Proof of \cref{k04}]Throughout this proof 
	we use the notation introduced in \cref{m07b}.
First, \cref{l01} and \cref{c01} show that
 there exists
$(\calX^{d,\theta,K,\varepsilon,t}_{s})_{
  d\in\N  ,
 \theta\in \Theta ,
 \varepsilon\in (0,1) ,
  t\in [0,T) ,
 s\in (t,T] 
}\subseteq \bfN$ such that the following items hold.
\begin{enumerate}[(a)]
\item For all $ d\in\N $,
$\theta\in \Theta$,
$\varepsilon\in (0,1)$,
 $t\in [0,T)$,
$s\in (t,T]$, $x\in \R^d$ we have that
$\calR(\calX^{d,\theta,K,\varepsilon,t}_{s})\in C(\R^d,\R^d)$
and
\begin{align}\label{h06}
(\calR(\calX^{d,\theta,K,\varepsilon,t}_{s}))(x)
=X^{d,\theta,K,\varepsilon,t,x}_{s}(\omega).
\end{align}
\item For all $ d\in\N $,
$\theta\in \Theta$,
$\varepsilon\in (0,1)$,
 $t_1\in [0,T)$,
$s_1\in (t_1,T]$, 
 $t_2\in [0,T)$,
$s_2\in (t_2,T]$, 
$x\in \R^d$ we have that
\begin{align}
\label{h07}
\calD(\calX^{d,\theta_1,K,\varepsilon,t_1}_{s_1})=\calD(
\calX^{d,\theta_2,K,\varepsilon,t_2}_{s_2}).
\end{align}
\item For all $ d\in\N $,
$\theta\in \Theta$,
$\varepsilon\in (0,1)$,
 $t\in [0,T)$,
$s\in (t,T]$ we have that
\begin{align}
\label{h08}
\dim(\calD(\calX^{d,\theta,K,\varepsilon,t}_{s}))=K(
\max\{\dim(\calD(\Phi_{\beta^d_\varepsilon})) ,\dim(\calD(\Phi_{\sigma^d_\varepsilon,0})),
\dim(\calD(\Phi_{F^d_\varepsilon,0}))\}
-1)+1
.
\end{align} 
\item For all $ d\in\N $,
$\theta\in \Theta$,
$\varepsilon\in (0,1)$,
 $t\in [0,T)$,
$s\in (t,T]$ we have that
\begin{align}
\begin{split}  \supnorm{\calD(\calX^{d,\theta,K,\varepsilon,t}_{s})}\leq 2d+\supnorm{\calD(\Phi_{\beta^d_\varepsilon})}
+
\supnorm{
\calD(\Phi_{\sigma^d_\varepsilon,0})}
+
\supnorm{
\calD(\Phi_{F^d_\varepsilon,0})}\leq\xeqref{c01} c_{d,\varepsilon}
.\end{split}\label{h09} 
\end{align}
\end{enumerate}
Throughout the rest of this proof let $m\in \N$, 
$d\in \N$,
$\varepsilon\in (0,1)$ be fixed, let $\mathrm{Id}_\R$ be the identity on $\R$, and let 
${L}\in \Z$ satisfy that
\begin{align}
{L}=\dim(\calD(\calX^{d,\theta,K,\varepsilon,t}_{s}))\xeqref{h08}=
K \left( \max\left\{\dim(\calD(\Phi_{\beta^d_\varepsilon  })),
\dim(\calD(\Phi_{\sigma^d_\varepsilon  }))
 \right\} -1\right)+1
 \label{h10}
\end{align}
(cf. \eqref{h08}).
We will prove the result via induction on $n\in \N_0$.
First, the base case $n=0$ is true since the $0$-function can be represented by DNN function of arbitrary length. For the induction step 
$\N_0\ni  n\mapsto n+1\in \N $
let 
$n\in \N_0$ and assume that there exists
$ (\Phi^{d,\theta,K,\varepsilon}_{\ell,m,t} )_{t\in[0,T],\theta\in \Theta}\subseteq \bfN$, $\ell\in [0,n]\cap\Z$, such that the following items are true.
\begin{enumerate}[(a)]
\item We have for all $t_1,t_2\in [0,T]$, $\theta_1,\theta_2\in \Theta $,
$\ell\in [0,n]\cap\Z$
that \begin{align}\label{h01b}
\calD(\Phi^{d,\theta_1,K,\varepsilon}_{\ell,m,t_1})
=\calD(\Phi^{d,\theta_2,K,\varepsilon}_{\ell,m,t_2}).
\end{align}
\item We have for all
$t\in [0,T]$, $\theta\in \Theta$,
$\ell\in [0,n]\cap\Z$ that
\begin{align}\label{h02b}
\dim(\calD( 
\Phi^{d,\theta,K,\varepsilon}_{\ell,m,t}
 ))
=(\ell+1)
{L}
+\ell(\dim(\calD (\Phi_{f_\varepsilon} )  ) -2)
+\dim(\calD (\Phi_{g^d_\varepsilon} )  )-1.
\end{align}
\item We have for all
$t\in [0,T]$, $\theta\in \Theta$, $\ell\in [0,n]\cap\Z$ that
\begin{align}
\label{h03b} \supnorm{\Phi^{d,\theta,K,\varepsilon}_{\ell,m,t}}\leq c_{d,\varepsilon}(3m)^\ell.
\end{align}
\item We have for all
$t\in [0,T]$, $\theta\in \Theta$, $x\in \R^d$, 
$\ell\in [0,n]\cap\Z$ that
\begin{align}\label{h04b}
U^{d,\theta,K,\varepsilon}_{\ell,m}(t,x,\omega)=(\calR(\Phi^{d,\theta,K,\varepsilon}_{\ell,m,t}))(x).
\end{align}
\end{enumerate}
Next,
\cref{b03}, the fact that
$g^d_\varepsilon=\calR(\Phi_{g^d_{\varepsilon}})$,
\eqref{h06}, \eqref{h07}, and \cref{m11b}
show for all
$\theta\in \Theta$, $i\in [1,m^{n+1}]\cap\Z$,
$t\in [0,T]$ that
\begin{align} \label{k09}\begin{split} 
&
g^d_\varepsilon\Bigl( X^{ d,(\theta,0,-i), K,\varepsilon,t,\cdot }_{T} (\omega)\Bigr)
=\mathrm{Id}_\R \left(g^d_\varepsilon\Bigl( X^{ d,(\theta,0,-i), K,\varepsilon,t,\cdot }_{T} (\omega)\Bigr)\right)
\\
&
\in \calR
\left(\left\{
\Phi\in\bfN\colon \calD(\Phi)=
\mathfrak{n}_{(n+1)\bigl(\dim(\calD(\Phi_{f_\varepsilon}))-2+{L}\bigr)+1}
\odot\calD (\Phi_{g^d_\varepsilon})
\odot\calD(\calX^{d,0,K,\varepsilon,0}_{T})
\right\}\right).
\end{split}\end{align}
In addition, 
the definition of $\odot$, 
\eqref{k07}, and \eqref{h10} demonstrate that
\begin{align} \begin{split} 
&\dim \left(\mathfrak{n}_{(n+1)\bigl(\dim(\calD(\Phi_{f_\varepsilon}))-2+{L}\bigr)+1}
\odot\calD (\Phi_{g^d_\varepsilon})
\odot\calD(\calX^{d,0,K,\varepsilon,0}_{T})
\right)\\
&
=
\dim\left(
\mathfrak{n}_{(n+1)\bigl(\dim(\calD(\Phi_{f_\varepsilon}))-2+{L}\bigr)+1}
\right)
+
\dim\left(\calD (\Phi_{g^d_\varepsilon})\right)
+\dim\left(\calD(\calX^{d,0,K,\varepsilon,0}_{T})\right)
-2\\
&=\xeqref{k07}
(n+1)\bigl(\dim(\calD(\Phi_{f_\varepsilon}))-2+{L}\bigr)+1
+
\dim\left(\calD (\Phi_{g^d_\varepsilon})\right)
+\xeqref{h10}{L}-2\\
&=(n+1)\bigl(\dim(\calD(\Phi_{f_\varepsilon}))-2\bigr)+(n+2){L}+
\dim\left(\calD (\Phi_{g^d_\varepsilon})\right)-1.
\end{split}\end{align}
Moreover,
\cref{b03}, the fact that
$f_\varepsilon=\calR(\Phi_{f_{\varepsilon}})$,
\eqref{h01b}, \eqref{h04b},
\eqref{h06}, \eqref{h07}, and \cref{m11b}
ensure for all
$\theta\in \Theta$, $i\in [1,m^{n+1}]\cap\Z$,
$t\in [0,T]$ that
\begin{align} \begin{split} 
&\Bigl(
f_\varepsilon\circ
U^{d,(\theta,n,i),K,\varepsilon}_{n,m}
\Bigr)
\Bigl(\mathfrak{T}_t^{(\theta,\ell,i)}(\omega), 
 X^{ d,(\theta,\ell,i), K,\varepsilon,t,\cdot }_{\mathfrak{T}_t^{(\theta,\ell,i)}(\omega)} (\omega)
\Bigr)\\
&\in 
\calR
\left(\left\{
\Phi\in\bfN\colon \calD(\Phi)=
\calD(\Phi_{f_\varepsilon}) \odot 
\calD(\Phi^{d,0,K,\varepsilon}_{n,m,0})
\odot
\calD(\calX^{d,0,K,\varepsilon,0}_{T})
\right\}\right).\end{split}
\end{align}
In addition, the definition of 
$\odot$, \eqref{h02b}, and
\eqref{h10} show that
\begin{align} \begin{split} 
&
\dim \left(\calD(\Phi_{f_\varepsilon}) \odot 
\calD(\Phi^{d,0,K,\varepsilon}_{n,m,0})
\odot
\calD(\calX^{d,0,K,\varepsilon,0}_{T})
\right)\\
&=
\dim \left(\calD(\Phi_{f_\varepsilon}) 
\right)+
\dim \left( 
\calD(\Phi^{d,0,K,\varepsilon}_{n,m,0})
\right)
+
\dim \left(
\calD(\calX^{d,0,K,\varepsilon,0}_{T})
\right)-2\\
&=
\dim \left(\calD(\Phi_{f_\varepsilon}) 
\right)+
\xeqref{h02b} \Bigl( (n+1)
{L}
+n(\dim(\calD (\Phi_{f_\varepsilon} )  ) -2)
+\dim(\calD (\Phi_{g^d_\varepsilon} )  ) -1
\Bigr)
+\xeqref{h10}
{L}-2\\
&=(n+1)\bigl(\dim(\calD(\Phi_{f_\varepsilon}))-2\bigr)+(n+2){L}+
\dim\left(\calD (\Phi_{g^d_\varepsilon})\right)-1.
\end{split}\end{align}
Furthermore,
 the fact that
$f_\varepsilon=\calR(\Phi_{f_{\varepsilon}})$, \cref{b03},
\eqref{h01b}, \eqref{h04b},
\eqref{h06}, \eqref{h07}, and \cref{m11b}
demonstrate for all
$\theta\in \Theta$, $i\in [1,m^{n+1}]\cap\Z$,
$t\in [0,T]$, $\ell\in [0,n-1]\cap\Z$ that
\begin{align} \begin{split} 
&
\Bigl(
f_\varepsilon\circ
U^{d,(\theta,\ell,i),K,\varepsilon}_{\ell,m}
\Bigr)
\Bigl(\mathfrak{T}_t^{(\theta,\ell,i)}(\omega), 
 X^{ d,(\theta,\ell,i), K,\varepsilon,t,\cdot }_{\mathfrak{T}_t^{(\theta,\ell,i)}(\omega)} ,\omega
\Bigr)\\&=
\Bigl(
f_\varepsilon\circ\mathrm{Id}_\R\circ
U^{d,(\theta,\ell,i),K,\varepsilon}_{\ell,m}
\Bigr)
\Bigl(\mathfrak{T}_t^{(\theta,\ell,i)}(\omega), 
 X^{ d,(\theta,\ell,i), K,\varepsilon,t,\cdot }_{\mathfrak{T}_t^{(\theta,\ell,i)}(\omega)} ,\omega
\Bigr)\\
&\quad\in 
\calR
\left(\left\{
\Phi\in\bfN\colon \calD(\Phi)=
\calD(\Phi_{f_\varepsilon}) \odot 
\mathfrak{n}_{(n-\ell)\bigl(\calD(\Phi_{f_\varepsilon}) -2+{L}\bigr)+1}
\odot 
\calD(\Phi^{d,0,K,\varepsilon}_{\ell,m,0})
\odot
\calD(\calX^{d,0,K,\varepsilon,0}_{T})
\right\}\right).\end{split}
\end{align}
In addition,  the definition of 
$\odot$, \eqref{k07}, \eqref{h02b}, and
\eqref{h10} show for all $\ell\in [0,n-1]\cap\Z$ that
\begin{align} \begin{split} 
&\dim\left(\calD(\Phi_{f_\varepsilon}) \odot 
\mathfrak{n}_{(n-\ell)\bigl(\calD(\Phi_{f_\varepsilon}) -2+{L}\bigr)+1}
\odot 
\calD(\Phi^{d,0,K,\varepsilon}_{\ell,m,0})
\odot
\calD(\calX^{d,0,K,\varepsilon,0}_{T})\right)
\\
&=
\dim \left(\calD(\Phi_{f_\varepsilon})  
\right)+
\dim \left(\mathfrak{n}_{(n-\ell)\bigl(\calD(\Phi_{f_\varepsilon}) -2+{L}\bigr)+1}\right)
 +\dim \left(
\calD(\Phi^{d,0,K,\varepsilon}_{\ell,m,0})
\right)+
\dim \left(
\calD(\calX^{d,0,K,\varepsilon,0}_{T})
\right)-3
\\
&=
\dim \left(\calD(\Phi_{f_\varepsilon})  
\right)+
\xeqref{k07} \Bigl((n-\ell)\bigl(\calD(\Phi_{f_\varepsilon}) -2+{L}\bigr)+1\Bigr)
 \\
&\quad +\xeqref{h02b}\Bigl(
(\ell+1)
{L}
+\ell(\dim(\calD (\Phi_{f_\varepsilon} )  ) -2)
+\dim(\calD (\Phi_{g^d_\varepsilon} )  )-1
\Bigr)+\xeqref{h10}{L}-3\\
&=(n+1)\bigl(\dim(\calD(\Phi_{f_\varepsilon}))-2\bigr)+(n+2){L}+
\dim\left(\calD (\Phi_{g^d_\varepsilon})\right)-1.
\end{split}
\end{align}
Furthermore,
 the fact that
$f_\varepsilon=\calR(\Phi_{f_{\varepsilon}})$, \cref{b03},
\eqref{h01b}, \eqref{h04b},
\eqref{h06}, \eqref{h07}, and \cref{m11b}
imply for all
$\theta\in \Theta$, $i\in [1,m^{n+1}]\cap\Z$,
$t\in [0,T]$, $\ell\in [1,n]\cap\Z$ that
\begin{align} \begin{split} 
&
\Bigl(
f_\varepsilon\circ
U^{d,(\theta,-\ell,i),K,\varepsilon}_{\ell-1,m}
\Bigr)
\Bigl(\mathfrak{T}_t^{(\theta,\ell,i)}(\omega), 
 X^{ d,(\theta,\ell,i), K,\varepsilon,t,\cdot }_{\mathfrak{T}_t^{(\theta,\ell,i)}(\omega)}(\omega) ,\omega
\Bigr)\\&=
\Bigl(
f_\varepsilon\circ\mathrm{Id}_\R\circ
U^{d,(\theta,-\ell,i),K,\varepsilon}_{\ell-1,m}
\Bigr)
\Bigl(\mathfrak{T}_t^{(\theta,\ell,i)}(\omega), 
 X^{ d,(\theta,\ell,i), K,\varepsilon,t,\cdot }_{\mathfrak{T}_t^{(\theta,\ell,i)}(\omega)}(\omega) ,\omega
\Bigr)\\
&\in 
\calR
\left(\left\{
\Phi\in\bfN\colon \calD(\Phi)=
\calD(\Phi_{f_\varepsilon}) \odot 
\mathfrak{n}_{(n-\ell+1)\bigl(\calD(\Phi_{f_\varepsilon}) -2+{L}\bigr)+1}
\odot 
\calD(\Phi^{d,0,K,\varepsilon}_{\ell-1,m,0})
\odot
\calD(\calX^{d,0,K,\varepsilon,0}_{T})
\right\}\right).\end{split}
\end{align}
In addition,  the definition of 
$\odot$, \eqref{k07}, \eqref{h02b}, and
\eqref{h10} show for all $\ell\in [1,n]\cap\Z$ that
\begin{align} \begin{split} 
&
\dim \left(
\calD(\Phi_{f_\varepsilon}) \odot 
\mathfrak{n}_{(n-\ell+1)\bigl(\calD(\Phi_{f_\varepsilon}) -2+{L}\bigr)+1}
\odot 
\calD(\Phi^{d,0,K,\varepsilon}_{\ell-1,m,0})
\odot
\calD(\calX^{d,0,K,\varepsilon,0}_{T})
\right)\\
&=\dim \left(\calD(\Phi_{f_\varepsilon})\right)
+\dim \left(\mathfrak{n}_{(n-\ell+1)\bigl(\calD(\Phi_{f_\varepsilon}) -2+{L}\bigr)+1}\right)\\&\quad 
+\dim \left(\calD(\Phi^{d,0,K,\varepsilon}_{\ell-1,m,0})\right)+\dim \left(\calD(\calX^{d,0,K,\varepsilon,0}_{T})\right)-3\\
&=
\dim \left(\calD(\Phi_{f_\varepsilon})\right)
+\xeqref{k07} \Bigl((n-\ell+1)\bigl(\calD(\Phi_{f_\varepsilon}) -2+{L}\bigr)+1\Bigr)\\
&\quad +\xeqref{h02b}\Bigl(
\ell
{L}
+(\ell-1)(\dim(\calD (\Phi_{f_\varepsilon} )  ) -2)
+\dim(\calD (\Phi_{g^d_\varepsilon} )  )-1
\Bigr)+\xeqref{h10}{L}-3
\\
&=(n+1)\bigl(\dim(\calD(\Phi_{f_\varepsilon}))-2\bigr)+(n+2){L}+
\dim\left(\calD (\Phi_{g^d_\varepsilon})\right)-1.
\end{split}\label{k11}\end{align}
Now,
\eqref{k09}--\eqref{k11} and \cref{b01b}
imply that there exists
$(\Phi^{d,\theta,K,\varepsilon}_{n+1,m,t})_{t\in[0,T], \theta\in \Theta}$ such that
$t\in[0,T]$, $\theta\in \Theta$, $x\in \R^d$
we have that
\begin{align} \begin{split} 
(\calR(\Phi^{d,\theta,K,\varepsilon}_{n+1,m,t}))(x)
&=
\frac{1}{m^{n+1}}\sum_{i=1}^{m^{n+1}}g^d_\varepsilon\Bigl( X^{ d,(\theta,0,-i), K,\varepsilon,t,x}_{T} (\omega)\Bigr)\\
&\quad+
\frac{1}{m}
\sum_{i=1}^{m}
\Bigl(
f_\varepsilon\circ
U^{d,(\theta,n,i),K,\varepsilon}_{n,m}
\Bigr)
\Bigl(\mathfrak{T}_t^{(\theta,\ell,i)}(\omega), 
 X^{ d,(\theta,\ell,i), K,\varepsilon,t,x}_{\mathfrak{T}_t^{(\theta,\ell,i)}(\omega)} (\omega),\omega
\Bigr)
\\
&\quad+
\sum_{\ell=0}^{n-1}
\frac{(T-t)}{m^{n+1-\ell}}
\sum_{i=1}^{m^{n+1-\ell}}\Bigl(
f_\varepsilon\circ
U^{d,(\theta,\ell,i),K,\varepsilon}_{\ell,m}
\Bigr)
\Bigl(\mathfrak{T}_t^{(\theta,\ell,i)}(\omega), 
 X^{ d,(\theta,\ell,i), K,\varepsilon,t,x}_{\mathfrak{T}_t^{(\theta,\ell,i)}(\omega)}(\omega) ,\omega
\Bigr)
\\
&\quad-
\sum_{\ell=1}^{n}
\frac{(T-t)}{m^{n+1-\ell}}
\sum_{i=1}^{m^{n+1-\ell}}\Bigl(
f_\varepsilon\circ
U^{d,(\theta,-\ell,i),K,\varepsilon}_{\ell-1,m}
\Bigr)
\Bigl(\mathfrak{T}_t^{(\theta,\ell,i)}(\omega), 
 X^{ d,(\theta,\ell,i), K,\varepsilon,t,x}_{\mathfrak{T}_t^{(\theta,\ell,i)}(\omega)}(\omega), \omega
\Bigr)\\
&=U^{d,\theta,K,\varepsilon}_{n+1,m}(t,x),
\end{split}\label{k17}
\end{align}
\begin{align}
\calD(\Phi^{d,\theta,K,\varepsilon}_{n+1,m,t})
=(n+1)\bigl(\dim(\calD(\Phi_{f_\varepsilon}))-2\bigr)+(n+2){L}+
\dim\left(\calD (\Phi_{g^d_\varepsilon})\right)-1,
\label{k16}\end{align}
and
\begin{align} \begin{split} 
\calD(\Phi^{d,\theta,K,\varepsilon}_{n+1,m,t})
&=\left[
\operatorname*{\boxplus}_{i=1}^{m^{n+1}}\left[
\mathfrak{n}_{(n+1)\bigl(\dim(\calD(\Phi_{f_\varepsilon}))-2+{L}\bigr)+1}
\odot\calD (\Phi_{g^d_\varepsilon})
\odot\calD(\calX^{d,0,K,\varepsilon,0}_{T})\right]\right]\\
&\quad \boxplus
\left[\operatorname*{\boxplus}_{i=1}^m
\left[
\calD(\Phi_{f_\varepsilon}) \odot 
\calD(\Phi^{d,0,K,\varepsilon}_{n,m,0})
\odot
\calD(\calX^{d,0,K,\varepsilon,0}_{T})
\right]\right]\\
&\quad
\boxplus
\left[
\operatorname*{\boxplus}_{\ell=0}^{n-1}
\operatorname*{\boxplus}_{i=1}^{m^{n+1-\ell}}
\left[\calD(\Phi_{f_\varepsilon}) \odot 
\mathfrak{n}_{(n-\ell)\bigl(\calD(\Phi_{f_\varepsilon}) -2+{L}\bigr)+1}
\odot 
\calD(\Phi^{d,0,K,\varepsilon}_{\ell,m,0})
\odot
\calD(\calX^{d,0,K,\varepsilon,0}_{T})
\right]
\right]\\
&\quad
\boxplus
\left[
\operatorname*{\boxplus}_{\ell=1}^{n}
\operatorname*{\boxplus}_{i=1}^{m^{n+1-\ell}}
\left[\calD(\Phi_{f_\varepsilon}) \odot 
\mathfrak{n}_{(n-\ell+1)\bigl(\calD(\Phi_{f_\varepsilon}) -2+{L}\bigr)+1}
\odot 
\calD(\Phi^{d,0,K,\varepsilon}_{\ell-1,m,0})
\odot
\calD(\calX^{d,0,K,\varepsilon,0}_{T})
\right]
\right].\end{split}\label{k14}
\end{align}
This shows for all
$t_1,t_2\in [0,T]$, $\theta_1,\theta_2\in \Theta$ that
\begin{align}
\calD(\Phi^{d,\theta_1,K,\varepsilon}_{n+1,m,t_1})=
\calD(\Phi^{d,\theta_2,K,\varepsilon}_{n+1,m,t_2}).\label{k15}
\end{align}
Furthermore, \cref{b04}, 
\eqref{k07}, \eqref{c01}, and \eqref{h09} imply that
\begin{align} \begin{split} 
&
\supnorm{\mathfrak{n}_{(n+1)\bigl(\dim(\calD(\Phi_{f_\varepsilon}))-2+{L}\bigr)+1}
\odot\calD (\Phi_{g^d_\varepsilon})
\odot\calD(\calX^{d,0,K,\varepsilon,0}_{T})}
\\
&\leq \max\left\{2d,
\supnorm{\mathfrak{n}_{(n+1)\bigl(\dim(\calD(\Phi_{f_\varepsilon}))-2+{L}\bigr)+1}
},
\supnorm{\calD (\Phi_{g^d_\varepsilon})},
\supnorm{\calD(\calX^{d,0,K,\varepsilon,0}_{T})}
\right\}\leq 
\xeqref{k07}\xeqref{c01}\xeqref{h09}
c_{d,\varepsilon}.\end{split}\label{k12}
\end{align}
Next,
\cref{b04},
\eqref{c01}, \eqref{h03b}, and
\eqref{h09} show that
\begin{align} \begin{split} 
&\supnorm{
\calD(\Phi_{f_\varepsilon}) \odot 
\calD(\Phi^{d,0,K,\varepsilon}_{n,m,0})
\odot
\calD(\calX^{d,0,K,\varepsilon,0}_{T})}\\
&
\leq \max\left\{
2d,
\supnorm{\calD(\Phi_{f_\varepsilon})},
\supnorm{
\calD(\Phi^{d,0,K,\varepsilon}_{n,m,0})},
\supnorm{
\calD(\calX^{d,0,K,\varepsilon,0}_{T})}
\right\}\leq 
\xeqref{c01}\xeqref{h03b}
\xeqref{h09}
c_{d,\varepsilon}(3m)^n.\end{split}
\end{align}
Furthermore, 
\cref{b04},
\eqref{c01}, \eqref{h03b}, and
\eqref{h09} 
demonstrate for all $\ell\in [0,n-1]\cap\Z$
that
\begin{align} \begin{split} 
&
\supnorm{
\calD(\Phi_{f_\varepsilon}) \odot 
\mathfrak{n}_{(n-\ell)\bigl(\calD(\Phi_{f_\varepsilon}) -2+{L}\bigr)+1}
\odot 
\calD(\Phi^{d,0,K,\varepsilon}_{\ell,m,0})
\odot
\calD(\calX^{d,0,K,\varepsilon,0}_{T})}\\
&
\leq \max\left\{2d,
\supnorm{
\calD(\Phi_{f_\varepsilon}) }
,
\supnorm{
\mathfrak{n}_{(n-\ell)\bigl(\calD(\Phi_{f_\varepsilon}) -2+{L}\bigr)+1}}
,
\supnorm{
\calD(\calX^{d,0,K,\varepsilon,0}_{T})}
\right\}\leq 
\xeqref{c01}\xeqref{h03b}
\xeqref{h09}
c_{d,\varepsilon}(3m)^\ell,
\end{split}\end{align}
In addition, 
\cref{b04},
\eqref{c01}, \eqref{h03b}, and
\eqref{h09} 
show for all $\ell\in [1,n]\cap\Z$ that
\begin{align} \begin{split} 
&
\supnorm{
\calD(\Phi_{f_\varepsilon}) \odot 
\mathfrak{n}_{(n-\ell+1)\bigl(\calD(\Phi_{f_\varepsilon}) -2+{L}\bigr)+1}
\odot 
\calD(\Phi^{d,0,K,\varepsilon}_{\ell-1,m,0})
\odot
\calD(\calX^{d,0,K,\varepsilon,0}_{T})}\\
&
\leq 
\max\left\{
2d,
\supnorm{
\calD(\Phi_{f_\varepsilon}) }
,
\supnorm{
\calD(\Phi^{d,0,K,\varepsilon}_{\ell-1,m,0})}
,
\supnorm{
\calD(\calX^{d,0,K,\varepsilon,0}_{T})}
\right\}\leq 
\xeqref{c01}\xeqref{h03b}
\xeqref{h09}
c_{d,\varepsilon}(3m)^{\ell-1}.\end{split}\label{k13}
\end{align}
Now,
\eqref{k14}, the triangle inequality, and  \eqref{k12}--\eqref{k13} imply for all
$t\in[0,T]$, $\theta\in \Theta$ that
\begin{align} \begin{split} 
&\supnorm{\calD(\Phi^{d,\theta,K,\varepsilon}_{n+1,m,t})}\\
&=
\sum_{i=1}^{m^{n+1}}
\supnorm{
\mathfrak{n}_{(n+1)\bigl(\dim(\calD(\Phi_{f_\varepsilon}))-2+{L}\bigr)+1}
\odot\calD (\Phi_{g^d_\varepsilon})
\odot\calD(\calX^{d,0,K,\varepsilon,0}_{T})
}
\\
&\quad +
\sum_{i=1}^m
\supnorm{
\calD(\Phi_{f_\varepsilon}) \odot 
\calD(\Phi^{d,0,K,\varepsilon}_{n,m,0})
\odot
\calD(\calX^{d,0,K,\varepsilon,0}_{T})}
\\
&\quad
+
\sum_{\ell=0}^{n-1}
\sum_{i=1}^{m^{n+1-\ell}}
\supnorm{\calD(\Phi_{f_\varepsilon}) \odot 
\mathfrak{n}_{(n-\ell)\bigl(\calD(\Phi_{f_\varepsilon}) -2+{L}\bigr)+1}
\odot 
\calD(\Phi^{d,0,K,\varepsilon}_{\ell,m,0})
\odot
\calD(\calX^{d,0,K,\varepsilon,0}_{T})}
\\
&\quad
+
\sum_{\ell=1}^{n}
\sum_{i=1}^{m^{n+1-\ell}}
\supnorm{\calD(\Phi_{f_\varepsilon}) \odot 
\mathfrak{n}_{(n-\ell+1)\bigl(\calD(\Phi_{f_\varepsilon}) -2+{L}\bigr)+1}
\odot 
\calD(\Phi^{d,0,K,\varepsilon}_{\ell-1,m,0})
\odot
\calD(\calX^{d,0,K,\varepsilon,0}_{T})}
\end{split}
\end{align}
and
\begin{align}
\begin{split}
&\supnorm{\calD(\Phi^{d,\theta,K,\varepsilon}_{n+1,m,t})}\\
&\leq \left[
\sum_{i=1}^{m^{n+1}}c_{d,\varepsilon} \right]+
\left[ \sum_{i=1}^{m}c_{d,\varepsilon}({3} m)^n\right]+
\left[ \sum_{\ell=0}^{n-1}\sum_{i=1}^{m^{n+1-\ell}}c_{d,\varepsilon}({3} m)^\ell\right]
+\left[ \sum_{\ell=1}^{n}\sum_{i=1}^{m^{n+1-\ell}}c_{d,\varepsilon}({3} m)^{\ell-1}\right]
\\
&= 
m^{n+1}c_{d,\varepsilon}+m c_{d,\varepsilon} (3m)^{n}+\left[\sum_{\ell=0}^{n-1}m^{n+1-\ell}c_{d,\varepsilon}(3m)^\ell\right]+\left[
\sum_{\ell=1}^{n}m^{n+1-\ell}c_{d,\varepsilon}(3m)^{\ell-1}\right]
\\
&= 
m^{n+1}c_{d,\varepsilon}\left[1+3^n+\sum_{\ell=0}^{n-1}3^\ell+\sum_{\ell=1}^{n}3^{\ell-1}\right]=
m^{n+1}c_{d,\varepsilon}\left[1+\sum_{\ell=0}^{n}3^\ell+\sum_{\ell=1}^{n}3^{\ell-1}\right]
\\
&\leq 
 cm^{n+1}\left[1+2\sum_{\ell=0}^{n} {3} ^\ell\right]= cm^{n+1}\left[1+2\frac{{3}^{n+1}-1}{{3}-1}\right]
= c_{d,\varepsilon}({3} m)^{n+1}.
\end{split}
\label{b18}
\end{align}
This, \eqref{k15}, \eqref{k16}, the definition of ${L}$ (see \eqref{h10}),  
and \eqref{k17} completes the induction step. The proof of \cref{k04} is thus completed.
\end{proof}

\section{Proof of \cref{k41}}
\label{s05}


In this section, we show the proof of our main result, \cref{k41}.

\begin{proof}[Proof of \cref{k41}]
First, by \cref{lemma f epsilon} we notice that there exist
$f_\varepsilon\in C(\R,\R)$ and
$\Phi_{f_\varepsilon}\in\bfN$ satisfying
\eqref{k65}--\eqref{growth f epsilon}.
Observe that
\eqref{k59}, \eqref{k60}, \eqref{k65}, and \eqref{k66}
show for all $d\in\N$, $\varepsilon\in (0,1)$ that
\begin{align}2d+
\supnorm{\calD(\Phi_{\beta^d_\varepsilon})}
+
\supnorm{\calD(\Phi_{\sigma^d_\varepsilon,0})}
+
\supnorm{\calD(\Phi_{F^d_\varepsilon,0})}
+\supnorm{\calD(\Phi_{g^d_\varepsilon})}
+\supnorm{\calD(\Phi_{f_\varepsilon})}
\leq bd^c\varepsilon^{-c}\label{k59b}
\end{align} and
\begin{align}
\dim(\calD(\Phi_{\beta^d_\varepsilon}))+ \dim(\calD(\Phi_{\sigma^d_\varepsilon,0}))+
 \dim(\calD(\Phi_{F^d_\varepsilon,0}))
+
\dim(\calD(\Phi_{g^d_\varepsilon}))+\dim(\calD(\Phi_{f_\varepsilon}))\leq bd^c\varepsilon^{-c}.
\label{k60b}
\end{align}
Furthermore, by \cref{x01,k32} we have for all
$d,K\in \N$, $\varepsilon\in(0,1)$,
$t\in[0,T]$, $x\in \R^d$ that 
\begin{align}
\max \left\{
\E \!\left[d^c+\left\lVert X^{d,0,K,\varepsilon,t,x}_{s}\right\rVert^2
\right],
\E \!\left[d^c+\left\lVert X^{d,0,\varepsilon,t,x}_{s}\right\rVert^2
\right]
,
\E \!\left[d^c+\left\lVert X^{d,0,t,x}_{s}\right\rVert^2
\right]\right\}
\leq  (d^c+\lVert x\rVert^2)e^{7c(s-t)}.
\label{k75}
\end{align}
For every $d\in \N$, $\varepsilon\in (0,1)$
let 
$U^{d,\theta,K,\varepsilon}_{n,m}\colon [0,T]\times\R^d\times \Omega\to \R$, 
$\theta\in \Theta$,  
$n,m\in \Z$, 
satisfy for all 
$\theta\in \Theta$,  
$n\in \N_0$, $m\in \N$, $t\in [0,T]$, $x\in \R^d$ that
\begin{align} \begin{split} &U^{d,\theta,K,\varepsilon}_{n,m}(t,x)=
\frac{\1_{\N}(n)}{m^n}\sum_{i=1}^{m^n}g^d_\varepsilon\Bigl( X^{ d,(\theta,0,-i), K,\varepsilon,t,x}_{T} \Bigr)\\
&\quad
+\sum_{\ell=0}^{n-1}
\frac{(T-t)}{m^{n-\ell}}
\sum_{i=1}^{m^{n-\ell}}\Bigl(
f_\varepsilon\circ
U^{d,(\theta,\ell,i),K,\varepsilon}_{\ell,m}
-\1_{\N}(\ell)
f_\varepsilon\circ
U^{d,(\theta,-\ell,i),K,\varepsilon}_{\ell-1,m}\Bigr)
\Bigl(\mathfrak{T}_t^{(\theta,\ell,i)}, 
 X^{ d,(\theta,\ell,i), K,\varepsilon,t,x}_{\mathfrak{T}_t^{(\theta,\ell,i)}} 
\Bigr).\end{split}
\end{align}
For every $ d,K\in\N $,
$\varepsilon\in (0,1)$
let $u^{d,K,\varepsilon},u^{d,\varepsilon},u^d\colon [0,T]\times \R^d\to\R$ 
be measurable functions introduced in 
\eqref{FK u K epsilon}, \eqref{FK u epsilon}, and \eqref{FK u}, respectively.

Next, let $c_1,c_2\in \R$, 
$(\varepsilon_{d,\epsilon})_{d\in\N,\epsilon\in(0,1)}\subseteq\R$,
$(N_{d,\epsilon})_{d\in\N,\epsilon\in(0,1)}\subseteq \N$, 
$(C_\delta)_{\delta\in (0,1)}\subseteq [0,\infty]$ 
satisfy for all $d\in\N$, $\delta,\epsilon\in (0,1)$
that
\begin{align}
c_1=6(c T^{-1})^{\frac{1}{2}}+
12 c^{\frac{3}{2}}(T+2)e^{21cT+5cT^2}T^\frac{1}{2}+23c
e^{24cT+5cT^2}, \quad c_2=2cc_1,\label{x82}
\end{align}
\begin{align}\label{x84}
c_2d^{2c}\lvert \varepsilon_{d,\epsilon}\rvert^\frac{1}{2}
=\frac{\epsilon}{2},\quad
N_{d,\epsilon}=\min\left\{n\in\N\cap[2,\infty)
\colon  c_2d^{2c}\left(\frac{e^{12cTn+\frac{n}{2}}}{n^\frac{n}{2}}
+\frac{1}{n^\frac{n}{2}}\right) \leq \frac{\epsilon}{2}\right\},
\end{align}
and
\begin{align}
C_\delta=\sup_{n\in [2,\infty)}\left[
\left(\frac{e^{(12cT+0.5)(n-1)}}{(n-1)^\frac{n-1}{2}}\right)^{6+\delta}
(3n)^{3n+1}\right].\label{x85}
\end{align}
Then the triangle inequality, \cref{x01,k32,a31} imply for all
$d,K\in \N$, $\theta\in\Theta$, $\varepsilon\in (0,1)$, $n,m\in \N$ that
\begin{align} \begin{split} 
&
\left(
\E\!\left[\left\lvert
{U}^{d,\theta,K,\varepsilon}_{n,m}(t,x)
-u^d(t,x)\right\rvert^2\right]
\right)^\frac{1}{2}\\
&
\leq 
\left(
\E\!\left[\left\lvert
{U}^{d,\theta,K,\varepsilon}_{n,m}(t,x)
-u^{d,K,\varepsilon}(t,x)
\right\rvert^2\right]
\right)^\frac{1}{2}
+\left\lvert
u^{d,K,\varepsilon}(t,x)-u^{d,\varepsilon}(t,x)\right\rvert+
\left\lvert u^{d,\varepsilon}(t,x)
-u^d(t,x)\right\rvert
\\
&
\leq  6
e^\frac{m}{2}m^{-\frac{n}{2}}e^{12 c Tn}
(cd^c T^{-1})^{\frac{1}{2}}\left(d^c+\lVert x\rVert^2\right)^\frac{1}{2}\\
&\quad 
+12 c^{\frac{3}{2}}d^{\frac{c}{2}}(T+2)e^{21cT+5cT^2}
(d^c+\lVert x\rVert^2)^\frac{1}{2}
\frac{T^\frac{1}{2}}{K^\frac{1}{2}}+
 23cd^c\varepsilon^{\frac{1}{2}}
(d^c+\lVert x\rVert^2)
e^{24cT+5cT^2}\\
&
\leq\left[6(cd^c T^{-1})^{\frac{1}{2}}+
12 c^{\frac{3}{2}}d^{\frac{c}{2}}(T+2)e^{21cT+5cT^2}T^\frac{1}{2}
+23cd^ce^{24cT+5cT^2}
\right] (d^c+\lVert x\rVert^2)\left(\varepsilon^\frac{1}{2}+\frac{e^{12cTn+\frac{m}{2}}}{m^\frac{n}{2}}+\frac{1}{K^\frac{1}{2}}\right)\\
&
\leq\left[6(c T^{-1})^{\frac{1}{2}}+
12 c^{\frac{3}{2}}(T+2)e^{21cT+5cT^2}T^\frac{1}{2}+23c
e^{24cT+5cT^2}
\right]d^c (d^c+\lVert x\rVert^2)\left(\varepsilon^\frac{1}{2}+\frac{e^{12cTn+\frac{m}{2}}}{m^\frac{n}{2}}+\frac{1}{K^\frac{1}{2}}\right)\\
&
\leq\xeqref{x82} {c}_1d^{c}
\left(\varepsilon^\frac{1}{2}+\frac{e^{12cTn+\frac{m}{2}}}{m^\frac{n}{2}}+\frac{1}{K^\frac{1}{2}}\right)
(d^c+\lVert x\rVert^2).
\end{split}
\end{align}
Hence, the triangle inequality, \eqref{x61}, and \eqref{x82} demonstrate for all
$d,K\in \N$, $\theta\in\Theta$, $\varepsilon\in (0,1)$, $n,m\in \N$ that
\begin{align} \begin{split} 
&
\left(
\int_{\R^d}
\E\!\left[\left\lvert
{U}^{d,\theta,K,\varepsilon}_{n,m}(t,x)
-u^d(t,x)\right\rvert^2\right]\Gamma^d(dx)\right)^\frac{1}{2}\\&\leq 
c_1d^{c}
\left(\varepsilon^\frac{1}{2}+\frac{e^{12 cTn+\frac{m}{2}}}{m^\frac{n}{2}}+\frac{1}{K^\frac{1}{2}}\right)
\left(
\int_{\R^d}
(d^c+\lVert x\rVert^2)^2\,\Gamma^d(dx)
\right)^\frac{1}{2}\\
&\leq 
{c}_1d^{c}
\left(\varepsilon^\frac{1}{2}+\frac{e^{12 cTn+\frac{m}{2}}}{m^\frac{n}{2}}+\frac{1}{K^\frac{1}{2}}\right)
\left(d^c+ \left(\int_{\R^d}\lVert x\rVert^4\,\Gamma^d(dx)\right)^\frac{1}{2}\right)\\
&\leq 
{c}_1d^{c}
\left(\varepsilon^\frac{1}{2}+\frac{e^{12 cTn+\frac{m}{2}}}{m^\frac{n}{2}}+\frac{1}{K^\frac{1}{2}}\right)\xeqref{x61}2cd^c
=\xeqref{x82}c_2d^{2c}\left(\varepsilon^\frac{1}{2}+\frac{e^{12 cTn+\frac{m}{2}}}{m^\frac{n}{2}}+\frac{1}{K^\frac{1}{2}}\right).
\end{split}
\end{align}
This, Fubini's theorem, and \eqref{x84} show for all $d\in\N$, $\epsilon\in(0,1)$ that
\begin{align} \begin{split} 
&
\left(
\E\!\left[
\int_{\R^d}
\left\lvert
{U}^{d,\theta,n^n,\varepsilon}_{n,n}(t,x)
-u^d(t,x)\right\rvert^2\Gamma^d(dx)\right]\right)^\frac{1}{2}\Bigr|_{\substack{n=N_{d,\epsilon}\\ \varepsilon=\varepsilon_{d,\epsilon} } }\\
&=
\left(
\int_{\R^d}
\E\!\left[\left\lvert
{U}^{d,\theta,n^n,\varepsilon}_{n,n}(t,x)
-u^d(t,x)\right\rvert^2\right]\Gamma^d(dx)\right)^\frac{1}{2}\Bigr|_{\substack{n=N_{d,\epsilon}\\ \varepsilon=\varepsilon_{d,\epsilon} } }\\
&
\leq c_2d^{2c}\left(\varepsilon^\frac{1}{2}+\frac{e^{12 cTn+\frac{n}{2}}}{n^\frac{n}{2}}+\frac{1}{n^\frac{n}{2}}\right)\Bigr|_{\substack{n=N_{d,\epsilon}\\ \varepsilon=\varepsilon_{d,\epsilon} } }\\&\leq \frac{\epsilon}{2}+
\frac{\epsilon}{2}=\epsilon.
\end{split}
\end{align}
Then for all $d\in \N$, $\epsilon\in (0,1)$ there exists $\omega_{d,\epsilon}\in \Omega$ such that 
\begin{align}
\int_{\R^d}
\left\lvert
{U}^{d,\theta,n^n,\varepsilon}_{n,n}(t,x,\omega_{d,\epsilon})
-u^d(t,x)\right\rvert^2\Gamma^d(dx)\Bigr|_{\substack{n=N_{d,\epsilon}\\ \varepsilon=\varepsilon_{d,\epsilon} } }\leq \epsilon^2.\label{x56}
\end{align}
Next, \cref{k04}, \eqref{k60b}, and \eqref{k59b} imply for all
$d\in \N$,
$\epsilon\in (0,1)$ that there exists
$\Psi_{d,\epsilon}\in \bfN$ such that the following items hold.
\begin{enumerate}[(A)]
\item We have for all
$t\in [0,T]$, $\theta\in \Theta$ that
\begin{align} \begin{split} 
\dim(\calD(\Psi_{d,\epsilon} ))&
=(n+1)
\left[
n^n \left( \max\left\{\dim(\calD(\Phi_{\beta^d_\varepsilon  })),
\dim(\calD(\Phi_{\sigma^d_\varepsilon  }))
 \right\} -1\right)+1\right]\\&\quad 
+n(\dim(\calD (\Phi_{f_\varepsilon} )  ) -2)
+\dim(\calD (\Phi_{g^d_\varepsilon} )  ) -1\Bigr|_{n=N_{d,\epsilon},\varepsilon=\varepsilon_{d,\epsilon}}\\
&\leq\xeqref{k60b} 2n n^n b d^c\varepsilon^{-c}+n
b d^c\varepsilon^{-c}+b d^c\varepsilon^{-c}\bigr|_{n=N_{d,\epsilon}}\leq 4 n^n nb d^c\varepsilon^{-c}\bigr|_{n=N_{d,\epsilon},\varepsilon=\varepsilon_{d,\epsilon}}.
\end{split}\label{x88}\end{align}
\item We have for all
$t\in [0,T]$, $\theta\in \Theta$ that
\begin{align} \begin{split} 
\supnorm{\calD(\Psi_{d,\epsilon})}&\leq \Bigl(2d+
\supnorm{\calD(\Phi_{\beta^d_\varepsilon})}
+
\supnorm{\calD(\Phi_{\sigma^d_\varepsilon,0})}\\
&\quad \qquad
+
\supnorm{\calD(\Phi_{F^d_\varepsilon,0})}
+\supnorm{\calD(\Phi_{g^d_\varepsilon})}
+\supnorm{\calD(\Phi_{f_\varepsilon})}\Bigr) (3n)^n\bigr|_{n=N_{d,\epsilon},\varepsilon=\varepsilon_{d,\epsilon}}\\
&\leq\xeqref{k59b} bd^c\varepsilon^{-c}(3n)^n\bigr|_{n=N_{d,\epsilon},\varepsilon=\varepsilon_{d,\epsilon}}.
\end{split}\label{x89}\end{align}
\item We have for all
$t\in [0,T]$, $\theta\in \Theta$, $x\in \R^d$ that
$
U^{d,\theta,n^n,\varepsilon}_{n,n}(t,x,\omega_{d,\epsilon})\Bigr|_{\substack{n=N_{d,\epsilon}\\ \varepsilon=\varepsilon_{d,\epsilon} } }=(\calR(\Psi_{d,\epsilon}))(x).
$
\end{enumerate}
This and \eqref{x56} show for all
$d\in \N$,
$\epsilon\in (0,1)$ that
\begin{align}
\int_{\R^d}
\left\lvert(\calR(\Psi_{d,\epsilon}))(x)
-u^d(t,x)\right\rvert^2\Gamma^d(dx)\leq \epsilon^2.\label{k92b}
\end{align}
Furthermore, the fact that $\forall\, \Phi\in\bfN\colon \calP(\Phi)\leq 2\dim (\calD(\Phi))\supnorm{\calD(\Phi)}^2$, \eqref{x88}, and \eqref{x89} imply for all $d\in \N$,
$\epsilon\in (0,1)$ that
\begin{align} \begin{split} 
&
\calP(\Psi_{d,\epsilon})\leq 2\dim (\calD(\Psi_{d,\epsilon}))\supnorm{\calD(\Psi_{d,\epsilon})}^2\leq 2\cdot\xeqref{x88} 4n^nnb d^c\varepsilon^{-c} \left(\xeqref{x89}bd^c\varepsilon^{-c}(3n)^n\right)^2\bigr|_{n=N_{d,\epsilon}}
\\
&=8 n^n nb^2 d^{3c}\lvert\varepsilon_{d,\epsilon}\rvert^{-3c}(3n)^{2n}\bigr|_{n=N_{d,\epsilon},\varepsilon=\varepsilon_{d,\epsilon}}
\end{split}\label{x92}
\end{align}
Recall that in \eqref{x84} we have for all $d\in \N$, $\epsilon\in (0,1)$ that 
$
c_2d^{2c}\lvert \varepsilon_{d,\epsilon}\rvert^\frac{1}{2}=\frac{\epsilon}{2}$. Hence, for all $d\in \N$, $\epsilon\in (0,1)$ we have that
$ \varepsilon_{d,\epsilon}=\frac{\epsilon^2}{4}\lvert c_2\rvert^{-2}d^{-4c} $. This, \eqref{x92}, and \eqref{x85} show for all $d\in\N$, $\epsilon,\delta\in(0,1)$ that 
\begin{align} \begin{split} 
&\calP(\Psi_{d,\epsilon})
\leq 8n^n n b^2 d^{3c}\lvert\varepsilon_{d,\epsilon}\rvert^{-3c}(3n)^{2n}
\Bigr|_{n=N_{d,\epsilon}}\\&\leq 8 b^2 d^{3c}\lvert\varepsilon_{d,\epsilon}\rvert^{-3c} (3n)^{3n+1}
\Bigr|_{n=N_{d,\epsilon}}\\
&\leq 
8 b^2 d^{3c}\left[\frac{\epsilon^2}{4}\lvert c_2\rvert^{-2}d^{-4c} \right]^{-3c} (3n)^{3n+1}
\Bigr|_{n=N_{d,\epsilon}}\\&\leq 4^{3c+2}b^2d^{3c+12c^2}\lvert c_2\rvert^{6c}\epsilon^{-6c}
(3N_{d,\epsilon})^{3N_{d,\epsilon}+1}\\
&\leq 
4^{3c+2}b^2d^{3c+12c^2}\lvert c_2\rvert^{6c}\epsilon^{-6c-6-\delta}
\epsilon^{6+\delta}
(3N_{d,\epsilon})^{3N_{d,\epsilon}+1}\\
&\leq 4^{3c+2}b^2d^{3c+12c^2}\lvert c_2\rvert^{6c}\epsilon^{-6c-6-\delta}
\left(\xeqref{x84}\frac{4c_2 d^{2c}e^{(12 cT+0.5)(N_{d,\epsilon}-1)}}{(N_{d,\epsilon}-1)^\frac{N_{d,\epsilon}-1}{2}}\right)^{6+\delta}
(3N_{d,\epsilon})^{3N_{d,\epsilon}+1}\\
&\leq 
 4^{3c+8+\delta}b^2d^{3c+12c^2+2c(6+\delta)}\lvert c_2\rvert^{6c+2c(6+\delta)}\epsilon^{-6c-6-\delta}
\left(\frac{e^{(12 cT+0.5)(N_{d,\epsilon}-1)}}{(N_{d,\epsilon}-1)^\frac{N_{d,\epsilon}-1}{2}}\right)^{6+\delta}
(3N_{d,\epsilon})^{3N_{d,\epsilon}+1}\\
&\leq 4^{3c+8+\delta}b^2d^{3c+12c^2+2c(6+\delta)}\lvert c_2\rvert^{6c+2c(6+\delta)}\epsilon^{-6c-6-\delta}C_\delta.
\end{split}\end{align}
Thus, \eqref{k92b}, the fact that $c_2$ does not depend on $d$ (see \eqref{x82}), and the fact that $\forall\,\delta\in (0,1)\colon C_\delta<\infty$ (cf. (171) in \cite{CHW2022})
complete the proof of \cref{k41}.\end{proof}

\section{Conclusion}

In this paper we prove that under moderate conditions 
deep neural networks with ReLU activation function 
can approximate the unique viscosity solution of the semilinear parabolic 
PIDE \eqref{PIDE} without curse of dimensionality
in the sense that the required number of parameters in the deep neural networks
increases at most polynomially in both the dimension $ d $ of PIDE \eqref{PIDE} 
and the reciprocal of the prescribed accuracy~$ \epsilon $. 
For future work, one may also attempt to show that similar constructions of DNNs can be applied
to approximately solve high-dimensional non-linear inverse problems 
(see, e.g. \cite{pineda2023deep}) 
and forward-backward stochastic differential equations
(see, e.g. \cite{bao2021data}) without the curse of dimensionality.

{\bibliographystyle{acm}
\bibliography{References}
}

\end{document}